\documentclass[12pt,leqno]{article}

\usepackage{latexsym,amsmath,amscd,amssymb,framed,graphics,graphicx,mathrsfs,enumerate,color}

\usepackage[square,authoryear]{natbib}
\usepackage[colorlinks]{hyperref}
\usepackage{url}

\usepackage[all]{xy}

\makeatletter


\textwidth 6.27 truein
\oddsidemargin 0 truein
\evensidemargin .2 truein
\topmargin -.6 truein
\textheight 9.1 in

\newcommand{\rem}[1]{}

\newcommand \al{\alpha}
\newcommand\be{\beta}
\newcommand\ga{\gamma}
\newcommand\de{\delta}

\newcommand\ze{\zeta}

\renewcommand\th{\theta}

\newcommand\ka{\kappa}
\newcommand\la{\lambda}

\newcommand\si{\sigma}

\newcommand\ph{\varphi}

\newcommand\ps{\psi}
\newcommand\om{\omega}
\newcommand\Ga{\Gamma}

\newcommand\Th{\Theta}

\newcommand\Ph{\Phi}
\newcommand\Ps{\Psi}
\newcommand\Om{\Omega}

\newcommand\ie{i.e.\ }

\renewcommand\o{\circ}
\renewcommand\div{\on{div}}
\newcommand\x{\times}
\newcommand\on{\operatorname}
\newcommand\Ad{\on{Ad}}
\newcommand\ad{\on{ad}}\newcommand\id{\on{id}}

\newcommand\Emb{\on{Emb}}
\newcommand\Hom{\on{Hom}}
\newcommand\Den{\on{Den}}
\newcommand\Aut{\mathcal{A}ut}
\newcommand\F{\mathcal{F}}
\newcommand\Diff{\on{Diff}}
\newcommand\hor{\on{hor}}
\newcommand\vol{\on{vol}}

\newcommand\ham{\on{ham}}

\newcommand\g{\mathfrak g}

\newcommand\ou{\mathfrak o}
\newcommand\dd{{\bf d}}

\newcommand\RR{\mathbb{R}}

\newcommand\J{\mathbf{J}}
\newcommand\X{\mathfrak X}

\renewcommand\O{\mathcal O}
\newcommand\A{\mathcal A}
\newenvironment{proof}[1][Proof]{\noindent\textbf{#1.} }{\ \rule{0.5em}{0.5em}}




\begin{document}

\newtheorem{theorem}{Theorem}[section]
\newtheorem{definition}[theorem]{Definition}
\newtheorem{lemma}[theorem]{Lemma}
\newtheorem{remark}[theorem]{Remark}
\newtheorem{proposition}[theorem]{Proposition}
\newtheorem{corollary}[theorem]{Corollary}
\newtheorem{example}[theorem]{Example}



\title{Geometric dynamics on\\the automorphism group of principal bundles:\\geodesic flows, dual pairs and chromomorphism groups}
\author{ Fran\c{c}ois Gay-Balmaz$^{1}$, Cesare Tronci$^{2}$, and Cornelia Vizman$^{3}$ }

\addtocounter{footnote}{1}
\footnotetext{Laboratoire de
M\'et\'eorologie Dynamique, \'Ecole Normale Sup\'erieure/CNRS, F--75231 Paris, France.
\texttt{francois.gay-balmaz@lmd.ens.fr}
\addtocounter{footnote}{1}}
\footnotetext{Section de
Math\'ematiques, \'Ecole Polytechnique F\'ed\'erale de Lausanne,
CH--1015 Lausanne, Switzerland, and Department of Mathematics, University of Surrey, GU2 7XH Guildford, United Kingdom.  \texttt{cesare.tronci@epfl.ch}
\addtocounter{footnote}{1} }
\footnotetext{Department of Mathematics,
West University of Timi\c soara. RO--300223 Timi\c soara. Romania.
\texttt{vizman@math.uvt.ro}
\addtocounter{footnote}{1} }

\date{ }
\maketitle

\makeatother
\maketitle




\begin{abstract}

We formulate Euler-Poincar\'e equations on the Lie group
$\Aut(P)$ of automorphisms of a principal bundle $P$. The corresponding flows are referred to as EP$\!\Aut$ flows. We mainly focus on geodesic flows associated to Lagrangians of Kaluza-Klein type.  In the special case of a trivial bundle $P$, we identify geodesics on certain infinite-dimensional semidirect-product Lie groups that emerge naturally from the construction.
This approach leads naturally to a dual pair structure containing $\delta\text{-like}$ momentum map solutions  that extend previous results on geodesic flows on the diffeomorphism group (EPDiff).
In the second part, we consider incompressible flows 
on the Lie group $\Aut_{\rm vol}(P)$ of volume-preserving bundle
automorphisms. In this context, the dual pair construction requires the definition of chromomorphism groups, i.e. suitable Lie group extensions generalizing the quantomorphism group.
\end{abstract}

\tableofcontents


\section{Introduction}

Many physical systems are described by geodesic dynamics on their
Lie group configuration space. Besides finite dimensional cases,
such as the rigid body, one of the most celebrated continuum systems
is Euler's equation for ideal incompressible fluids \cite{Ar66}, followed by
 magnetohydrodynamics in plasma physics \cite{Ha94}.
These geodesic fluid flows may allow for $\delta$-like measure
valued solutions, whose most famous example is the vortex filament
dynamics for Euler's vorticity equation. Another example is provided
by the magnetic vortex line solutions of MHD \cite{Andrade2006}, which are limiting
cases of the magnetic flux tubes observed, for example, on the solar
surface.

Then, the existence of vortex solutions in different physical
contexts leads to the natural question whether these singular
solutions may emerge spontaneously in general continuum
descriptions. For example, Euler's equation is not supposed to
generate vortex solutions, which are usually considered as invariant
solution manifolds. On the other hand, the singularity formation
phenomenon (steepening) is one of the special features of certain shallow-water
models such as the integrable Camassa-Holm equation (CH)
\cite{CaHo1993} and its higher dimensional version ($n$-CH)
\cite{HoMaRa98}, describing geodesics on the diffeomorphism group
with respect to $H^1$ metrics. Its extension to more general metrics
is known as EPDiff \cite{HoMa2004} and found applications in
different contexts, from turbulence \cite{FoHoTi01} to imaging
science \cite{HoRaTrYo04}. 
\rem{ 
In one dimension, the spontaneous
emergence of singular peakon solutions of the CH equation is
rigorously proved by the ``steepening lemma'' \cite{CaHo1993}, while
in higher dimensions this phenomenon has been observed in numerical
simulations \cite{HoPuSt04}. 
}  
Remarkably, the singular solutions of
these systems have been shown to be momentum maps determining
collective dynamics on the space of embeddings. This momentum map
turns out to be one leg of a \textit{dual pair} associated to
commuting lifted actions on the cotangent bundle of embeddings,
\cite{HoMa2004}.

Another remarkable property of the Camassa-Holm and its
$n$-dimensional generalization on manifolds with boundary is the
smoothness of its geodesic spray in Lagrangian representation
(\cite{HoMa2004}, \cite{GB2009}) which proves the local
well-posedness of the equation in Sobolev spaces. Such a property
was first discovered in \cite{EbMa1970} for ideal incompressible
fluids and later extended to the averaged Euler equations with
Dirichlet or Navier slip boundary conditions (\cite{MaRaSh2000},
\cite{Sh2000}, \cite{GBRa2005}).

The integrability properties of the CH equation were extended to a
two component system (CH2) \cite{ChLiZh2005,Ku2007}. This system was
interpreted in terms of shallow water dynamics by Constantin and
Ivanov \cite{CoIv08}, who also proved the steepening mechanism. The
investigation of the geometric features of the CH2 system allowed to
extend this system to any number of dimension and to anisotropic
interactions \cite{HoTr2008}. Although possessing a steepening
mechanism (singularity formation), the CH2 equations do not allow for $\delta$-like solution
in both variables thereby destroying one of the peculiar features
possessed by the CH equation. However, slight modifications in the
CH2 Hamiltonian introduce the $\delta$-function solutions for a
modified system (MCH2). These solutions were shown to  emerge
spontaneously in \cite{HoOnTr2009}, where the corresponding steepening
mechanism was presented. 
\rem{ 
Moreover, besides the shallow water
interpretation of CH2 dynamics \cite{CoIv08}, the MCH2 equation
extends EPDiff to comprise space charge effects for charged fluids.
} 
Recently, the MCH2 system has been shown to be in strict relation with the two-dimensional EPDiff equation \cite{HoIv2011}.

All the systems mentioned  above  determine specific geodesic flows
on certain infinite dimensional Lie groups: CH and $n$-CH are
\textit{geodesics on the whole diffeomorphism group}, while CH2 and
MCH2, as well as their $n$ dimensional generalization or their
extension to anisotropic interactions, are \textit{geodesics on a
semidirect product involving diffeomorphisms and Lie group-valued
functions\/} (i.e. the gauge group). Therefore one is led to investigate further the
properties of geodesic flows on diffeomorphism groups (cf. e.g.
\cite{Vi08}), with particular attention towards the singular
solution dynamics and its underlying geometry. Some of the momentum map properties of MCH2-type systems were found in \cite{HoTr2008}. These suggest that more fundamental features are attached to systems on infinite-dimensional semidirect-products. This stands as the main motivation for this paper, which focuses on the investigation of the deep geometric features underlying MCH2-type systems, in terms of geodesic flows on the Lie group of bundle automorphisms. In physics, the Lie group $\Aut(P)$ of automorphisms of a principal bundle $P$ is the diffeomorphism group underling Yang-Mills charged fluids \cite{HoKu88}, \cite{GBRa2008a} and this provides a physical interpretation of the geodesic flows considered here. For example, MCH2 corresponds to an Abelian Yang-Mills charged fluid, that is a fluid possessing an electric charge. When the principal connection on $P$ is used to write the explicit form of the equations, this leads to the interpretation of a charged fluid in an external magnetic field, whose vector potential is given precisely by the connection one-form.

\medskip

\subsection{Goal of the paper} In this paper, we investigate
geodesics on the automorphism group of a principal bundle, thereby
formulating what we shall call the EP$\mathcal{A}ut$ equations
(\emph{Euler-Poincar\'e equations on the automorphisms}), which generalize the systems mentioned in the previous section. In particular, the principal bundle setting
allows  to comprise all the momentum map properties of 
MCH2-type systems \cite{HoTr2008} in a single dual pair diagram. One leg of this
dual pair characterizes the singular solution dynamics of
EP$\mathcal{A}ut$, whereas the other leg naturally yields Noether's
conserved quantities. In addition, the bundle setting simplifies the
group actions which are involved in the reduction process, so that
these actions are now simply the cotangent lifts of the left- and right-
composition by the automorphism group.

In a more physical context, the present work is also motivated by
the introduction of a vector potential $\mathbf{A}$ (associated to the principal connection, denoted by
$\mathcal{A}$) in MCH2-type dynamics, which arises naturally from our principal
bundle approach. This quantity is intrinsic to any principal bundle: for example,  in the case of a non-trivial bundle, a
connection becomes necessary in order to write the equations explicitly. A similar approach was used in \cite{GBRa2008a} to
present the reduced Lagrangian and Hamiltonian formulations of ideal
fluids interacting with Yang-Mills fields, \cite{GiHoKu1983}. The
introduction of non-trivial principal bundles in physics is motivated by their
frequent appearance in common gauge theories \cite{Bleeker}. 
\rem{ 
Note
that the introduction of the magnetic vector potential $\mathbf{A}$
generalizes MCH2 dynamics to consider space charge effects in a high
stationary magnetic field $\mathbf{B}=\nabla\times\mathbf{A}$ (arising from the associated curvature two-form, denoted by $\mathcal{B}=\mathbf{d}^{\mathcal{A}} \mathcal{A}$). These space-charge effects are
often observed in laboratory experiments, for
example involving incompressible flows of non-neutral plasmas. In this case, the realization of vortex $\delta$-like structures in
the plasma density is a well known experimental result
\cite{DuON1999}. 
}   

Motivated by the dynamics of incompressible Yang-Mills fluids, we also
consider volume-preserving fluid flows. As a result, we obtain a
Yang-Mills version of Euler's equation on the volume-preserving
automorphism group $\Aut_{\rm vol}$. In this situation, an
incompressible fluid carries a Yang-Mills charge under the influence
of an external Yang-Mills magnetic potential (the connection
one-form). The incompressibility property significantly affects the
properties of the system. Then, the resulting equations may
involve the fluid vorticity ${\boldsymbol\omega}={\rm curl}\,{\bf u}$ and the
natural question arises whether the geometric properties of Euler's
equation (cf. \cite{MaWe83}) can be extended to the present case.
In this context, the momentum map properties deserve particular
care, since they require the use of appropriate extensions of
infinite dimensional Lie groups. The geometry involved is extremely
rich and highly non trivial, yielding relations with the group of
quantomorphisms, whose extension to the Yang-Mills
setting (the chromomorphism group) becomes necessary. In the last part of the paper we shall
present explicit formulas for the left and right momentum maps
associated to the EP$\Aut_{\rm vol}$ system.

\rem{ 
\subsection{Background: Euler-Poincar\'e equation on the diffeomorphism group}
This section is an introductory summary of the topic: for more
detailed discussions, we address the reader to
\cite{MaRa99,HoMa2004}.

Consider a Lie group $G$ as the configuration space for a right-invariant Lagrangian
$L:TG\rightarrow\mathbb{R}$. A basic result of Euler-Poincar\'e theory is that a curve $g(t)$ is a solution of the
Euler-Lagrange equations associated to $L$ if and only if the
Eulerian velocity $\xi(t)=\dot g(t)g(t)^{-1}$ is a solution of the
Euler-Poincar\'e equations
\begin{equation}\label{general_EP}
\frac{\partial}{\partial t}\frac{\delta
l}{\delta\xi}+\operatorname{ad}^*_\xi\frac{\delta l}{\delta\xi}=0,
\end{equation}
where $l:\mathfrak{g}\rightarrow\mathbb{R}$ is the reduced
Lagrangian induced by $L$. Here $\delta l/\delta\xi$ denotes the
functional derivative of $l$, which depends on the choice of a space
$\mathfrak{g}^*$ in (nondegenerate) duality with $\mathfrak{g}$, that is,
\[
\left\langle\frac{\delta l}{\delta\xi},\nu\right\rangle=\left.\frac{d}{dt}\right|_{t=0}l(\xi+t\nu),\quad\text{for all $\nu\in\mathfrak{g}$},
\]
where $\langle\,,\rangle$ denotes the duality pairing.
An important case arises when $l$ is quadratic, that is, when $L$ is the quadratic form associated to a right invariant Riemannian metric on $G$. In this case, the Euler-Lagrange equations describe geodesic motion relative to this metric.

There is a similar result on the Hamiltonian case, yielding the
Lie-Poisson equations
\[
\frac{\partial }{\partial t}\mu+\operatorname{ad}^*_{\frac{\delta
h}{\delta\mu}}\mu=0,
\]
relative to the reduced Hamiltonian
$h:\mathfrak{g}^*\rightarrow\mathbb{R}$ induced by a right-invariant
Hamiltonian $H$ on $T^*G$.

When $G$ is the group $\operatorname{Diff}(M)$ of all diffeomorphisms of a manifold $M$, one gets the so called {\it EPDiff
equation} (Euler-Poincar\'e on the diffeomorphism group)
\begin{equation}\label{EP-eq}
\frac{\partial}{\partial t}\frac{\delta
l}{\delta\mathbf{u}}+\pounds_{\mathbf{u}}\frac{\delta
l}{\delta\mathbf{u}}=0,
\end{equation}
where $\pounds_\mathbf{u}$ denotes the Lie derivative acting on one-form densities.
Note that here the Lie algebra is given by the space $\mathfrak{X}(M)$ of
vector fields on $M$, and its dual is identified with the space of
one-form densities, that is, we have
$\delta l/\delta\mathbf{u}\in \mathfrak{X}(M)^*=\Omega^1(M)\otimes\operatorname{Den}(M)$. Using the $L^2$ duality paring between vector fields and one-form densities, one observes that the infinitesimal coadjoint action $\operatorname{ad}^*_\mathbf{u}$ is given by the Lie derivative. One can also write the Lie-Poisson equations associated to a Hamiltonian $h$ on $\mathfrak{X}(M)^*$ as
\[
\frac{\partial}{\partial t}\mathbf{m}+\pounds_{\frac{\delta h}{\delta\mathbf{m}}}\mathbf{m}=0.
\]
It is sometimes convenient to fix a Riemannian metric $g$ on $M$ in order to write the equations more explicitly. Let $\mathbf{v}$ be the vector field such that $\mathbf{m}= \mathbf{v}^\flat\otimes\mu_M$,
where $\flat$ is the flat operator associated to $g$ and $\mu_M$ is the Riemannian volume. Then the EPDiff equation can be explicitly written as
\begin{equation}\label{explicit_EPDiff}
\partial_t\mathbf{v}+\nabla_\mathbf{u}\mathbf{v}+\nabla\mathbf{u}^\mathsf{T}\cdot\mathbf{v}+\mathbf{v}\operatorname{div}(\mathbf{u})=0,
\end{equation}
where $\nabla$ is the Levi-Civita covariant derivative.
In the particular case of the Euclidean space $M=\mathbb{R}^3$, the equation can be rewritten as
\begin{equation}\label{EPDiff-3D}
\partial_t\mathbf{v}
- \mathbf{u}\times{\rm curl\,}\mathbf{v} + \nabla
(\mathbf{u}\cdot\mathbf{v}) + \mathbf{v}({\rm div\,}\mathbf{u}) =0.
\end{equation}

The EPDiff equation is usually associated to a quadratic Lagrangian of the form
\[
l({\bf u})=\frac12 \int_M {\bf u}\!\cdot\! Q{\bf u}=\frac12\left\|\bf u\right\|_Q,
\]
where $Q:\mathfrak{X}(M)\rightarrow\mathfrak{X}(M)^*$ is a positive symmetric operator such that it defines a
norm \cite{HoMa2004}. Since $\delta l/\delta\mathbf{u}=Q\mathbf{u}$, when
$Q$ is invertible the EPDiff equation can also be written in Lie-Poisson form
\[
\partial_t\mathbf{m}+\pounds_\mathbf{u}\mathbf{m}=0
\]
with respect to the Hamiltonian
\[
h({\bf m})=\frac{1}{2}\int_M {\bf m}\cdot Q^{-1}{\bf m}.
\]
\begin{remark}\normalfont
Although Euler-Poincar\'e (EP) equations on infinite-dimensional Lie
groups are often formulated as geodesics associated to an
appropriate invariant metric (see \cite{HoMa2004,HoTr2008} for
EPDiff and its semidirect-product extension), it should be
emphasized that EP equations arise from an \emph{arbitrary}
invariant Lagrangian on a Lie group $G$. For example, equation
\eqref{EP-eq} is the EPDiff equation associated to a generic
invariant Lagrangian $l:\mathfrak{g}\to\Bbb{R}$. Then, the interest
in geodesic EP equations is due to the several geometric questions
emerging in this particular case, as shown in
\cite{HoMa2004,HoTr2008}. For the same reason, this paper will
mainly focus on the geometry of geodesic EP$\Aut$ and EP$\Aut_ {\rm
vol}$ equations.
\end{remark}

\medskip

An important result concerning the geometry of the EPDiff equation
is the following
\begin{theorem}[\cite{HoMa2004}]
The singular solution
\begin{equation}\label{singsolepdiff}
\mathbf{m}(x,t) =\sum_{i=1}^N\int_S{\bf P}_{\!i}(s,t)\,
\delta(x-{\bf Q}_i(s,t))
\,{\rm d}s,
\end{equation}
is a momentum map arising from the left action of the diffeomorphism group $\operatorname{Diff}(M)$ on the space of embeddings $\operatorname{Emb}(S,M)$.
\end{theorem}

To illustrate this theorem, consider the simple case $M=\mathbb{R}^n$
and fix a $k$-dimensional submanifold $S$ of $\mathbb{R}^n$. Now
consider the embedding ${\mathbf{Q}_i:S\rightarrow\mathbb{R}^n}$.
Such embeddings form a smooth manifold $\text{Emb}(S,\mathbb{R}^n)$,
and thus one can consider its cotangent bundle
$(\mathbf{Q}_i,\mathbf{P}_i)\in T^*\text{Emb}(S,\mathbb{R}^n)$.
Consider $\text{Diff}(\mathbb{R}^n)$ acting on
$\text{Emb}(S,\mathbb{R}^n)$ on the left by composition of functions
$\left(\ph\,\mathbf{Q}=\ph\circ\mathbf{Q}\right)$ and lift this action
to $T^*\text{Emb}(S,\mathbb{R}^n)$. This procedure constructs the
singular solution momentum map for EPDiff,
\[
\mathbf{J}_L:T^*\text{Emb}(S,\mathbb{R}^n)\rightarrow\mathfrak{X}^*(\mathbb{R}^n)
\qquad\text{with}\qquad
\mathbf{J}_L(\mathbf{Q},\mathbf{P})=\!\int\mathbf{P}(s,t)\,\delta(x-\mathbf{Q}(s,t))
\,{\rm d}^ks.
\]
This construction was extensively discussed in
\cite{HoMa2004}, where different proofs were given. A key result is
that the momentum map constructed this way is equivariant, which
means it is also a Poisson map. This explains why the coordinates
$(\mathbf{Q},\mathbf{P})$ undergo Hamiltonian dynamics. There is
also a right $\operatorname{Diff}(S)-$action
$\left(\mathbf{Q}\,\eta=\mathbf{Q}\circ\eta\right)$, whose momentum
map corresponds to
\[
\mathbf{J}_R:T^*\text{Emb}(S,\mathbb{R}^n)\rightarrow\mathfrak{X}^*(S)
\qquad\text{with}\qquad \mathbf{J}_R(\mathbf{Q},\mathbf{P})={\bf
P}\cdot\mathbf{d}{\bf Q}.
\]
One is thus led to the usual diagram\\
\begin{picture}(150,100)(-70,0)%
\put(105,75){$T^{\ast}\! \operatorname{Emb}(S,M)$}

\put(90,50){$\mathbf{J}_L$}

\put(160,50){$\mathbf{J}_R$}

\put(72,15){$\mathfrak{X}(M)^{\ast}$}

\put(170,15){$\mathfrak{X}(S)^{\ast}$}

\put(130,70){\vector(-1, -1){40}}

\put(135,70){\vector(1,-1){40}}

\end{picture}\\
which comprises the geometric properties of the EPDiff equation.
Such a diagram is called \emph{dual pair} \cite{We83}. Although this
work will not deal with the technical questions concerning the
definition of a dual pair (especially involving infinite-dimensional
Lie groups), we shall still refer to these type of structures as
``dual pairs'', without entering further details that will be explored in future work (\cite{GBVi2011}).

} 

\subsection{Summary of the main results} 

This paper formulates the general Euler-Poincar\'e equations on
the $\Aut(P)$ group of automorphisms on a principal $\O-$bundle $P$.
In physical terms this construction generalizes ordinary fluid
flows, such as Burgers or Camassa-Holm equations, to account for the
transport of some Yang-Mills charge under the influence of a
magnetic potential, given by the connection one-form on $P$. 

Besides
the explicit formulation of the equations, a first result is obtained in Section \ref{section2} for the case of a trivial bundle $P=M\times\O$, and
it regards the construction of a dual pair that extends the results
presented in \cite{HoTr2008}. Section \ref{Sec:DualPairTrivial} investigates the geometric properties
of the dual pair structure in terms of Clebsch variables, singular solutions and
their associated conservation laws. Remarkably, these properties are
all embodied in the dual pair structure, which is the unique feature
of this construction.

In Section \ref{Sec:NonTrivialEPAut}, the more general case of a non-trivial principal bundle is considered and the corresponding 
equations are presented in different fashions. The main result
regards again the dual pair structure. In particular, the Clebsch representation of the fluid momentum is found to involve its own magnetic field, which in turn affects the singular solution dynamics; see Section \ref{sec:momaps_and_singsol}. 

Motivated by the geometric dynamics of incompressible Yang-Mills fluids, Section \ref{Sec:EPAut_vol} investigates
the incompressible version of EP$\Aut$, thereby studying geodesic
flows on the bundle automorphisms that project down to
volume-preserving diffeomorphisms on the base. This immediately
extends Euler's equation for ideal fluids to account for the
transport of a Yang-Mills charge. 

After presenting the
explicit form of the equations in different cases, Section \ref{MomapsForEPAut_vol} focuses
on the geometry underlying this incompressible flow, thereby obtaining a new
Clebsch representation of the fluid vorticity, that also encodes the
presence of the advected Yang-Mills charge. Finally, Section \ref{Sec:EPAut_volForYM} compares both left and
right momentum maps with the corresponding
expressions applying to the case of Euler's vorticity equation
\cite{MaWe83}.
This construction requires the use of Lie group extensions. In
particular, the left leg momentum map requires (see Section \ref{Sec:Aut_ham-bar}) the definition of
new Lie groups, the chromomorphism groups, that extend the quantomorphism group 
to comprise  Yang-Mills particle systems.


\section{EP$\!\Aut$ flows on a trivial principal bundle\label{section2}}

This section presents the EP$\Aut$ system of equations in the special case of a trivial principal bundle. The discussion below recalls background definitions of the automorphism group of principal bundles and specializes them to the case of a trivial bundle.

\subsection{Generalities on bundle automorphisms}
Before giving the EP$\!\mathcal{A}ut$ equation we recall some facts
concerning the Euler-Poincar\'e and Lie-Poisson reductions. Consider
a right principal bundle $\pi: P\rightarrow M$, with structure group
$\mathcal{O}$. We denote by
\[
\Phi:\mathcal{O}\times P\rightarrow P\,,\; (g,p)\mapsto \Phi_g(p)
\]
the associated right action. The structure group $\mathcal{O}$ is sometimes called \emph{order parameter group} in condensed matter physics. The
automorphism group $\mathcal{A}ut(P)$ of $P$
consists of all $\mathcal{O}$-equivariant
diffeomorphisms of $P$, that is, we have
\[
\mathcal{A}ut(P)=\{\varphi\in\operatorname{Diff}(P)\mid \varphi\circ\Phi_g=\Phi_g\circ\varphi,\;\;\text{for all $g\in \mathcal{O}$}\},
\]
where $\operatorname{Diff}(P)$ denotes the group of all diffeomorphisms of $P$. An automorphism $\varphi$ induces a diffeomorphism $\bar\varphi$ of the base $M$, by the condition $\pi\circ\varphi=\bar\varphi\circ\pi$. The Lie algebra $\mathfrak{aut}(P)$ of the automorphism group consists of $\mathcal{O}$-equivariant vector fields on $P$.

When the bundle $P$ is trivial we can write $P=M\times \mathcal{O}$. In this case the group of all
automorphisms of $P$ is isomorphic to the semidirect product group
\[
\mathcal{A}ut(P)\simeq\operatorname{Diff}(M)\,\circledS\,\mathcal{F}(M,\mathcal{O}),
\]
where $\mathcal{F}(M,\mathcal{O})$ denotes the gauge group of $\mathcal{O}$-valued functions defined on $M$, and $\operatorname{Diff}(M)$ acts on $\mathcal{F}(M,\mathcal{O})$
by composition on the right.  To a couple $(\eta,
\chi)$ in the semidirect product, is associated the automorphism
\[
(x,g)\in M\times\mathcal{O}\mapsto (\eta(x),\chi(x)g)\in M\times\mathcal{O}.
\]
In the trivial case, the Lie algebra
of the automorphism group is the semidirect product Lie algebra
$\mathfrak{X}(M)\,\circledS\,\mathcal{F}(M,\mathfrak{o})$,
where $\mathfrak{X}(M)$ denotes the Lie algebra of vector fields on $M$, while $\mathfrak{o}$ denotes the Lie algebra of the structure group
$\mathcal{O}$, so that $\mathcal{F}(M,\mathfrak{o})$ is the Lie algebra of $\mathfrak{o}$-valued functions defined on $M$.

\subsection{The EP$\mathcal{A}ut$ system on a trivial principal bundle}

When $P$ is a trivial principal bundle, the Euler-Poincar\'e equations
on the automorphism group $\mathcal{A}ut(P)$ are given in the following proposition. (See e.g. \cite{GBRa2009} and \cite{Ho2002} for its proof in the context of complex fluid dynamics).

\begin{proposition}[The $\operatorname{EP}\mathcal{A}ut$ equations on a trivial principal bundle] \label{EPaut_trivial}
Let $L$ be an invariant Lagrangian $L:T\Aut(M\times\mathcal{O})\to\Bbb{R}$ , so that the isomorphism $\Aut(M\times\mathcal{O})\simeq\operatorname{Diff}(M)\,\circledS\,\mathcal{F}(M,\mathcal{O})$ yields $L(\varphi,\dot\varphi)=L(\eta,\dot\eta,\chi,\dot\chi)$. 
Then, upon introducing the reduced variables 
\[
(\mathbf{u},{\nu})=(\dot\eta\circ\eta^{-1},(\dot\chi\chi^{-1})\circ\eta^{-1})\in\mathfrak{X}(M)\,\circledS\,\mathcal{F}(M,\mathfrak{o}),
\]
the Eu\-ler-Poincar\'e equations on $\Aut(M\times\mathcal{O})\simeq\operatorname{Diff}(M)\,\circledS\,\mathcal{F}(M,\mathcal{O})$ that arise from the symmetry-reduced Hamilton's principle
\[
\delta\int_{t_1}^{t_2} \!l(\mathbf{u},\nu)\,{\rm d} t=0,
\]
are written as
\begin{equation}\label{EPAut_trivial}
\left\{\begin{array}{l}
\vspace{0.2cm}\displaystyle\frac{\partial}{\partial t} \frac{\delta l}{\delta\mathbf{u}} +\pounds_{\mathbf{u}}\frac{\delta l}{\delta\mathbf{u}}+\frac{\delta l}{\delta{\nu}}\!\cdot\!\mathbf{d}{\nu}=0\\
\displaystyle\frac{\partial}{\partial t}\frac{\delta l}{\delta{\nu}}+\pounds_{\mathbf{u}}\frac{\delta l}{\delta{\nu}}+\operatorname{ad}^*_{{\nu}}\frac{\delta l}{\delta{\nu}}=0,
\end{array}\right.
\end{equation}
where the operator $\pounds_\mathbf{u}$ denotes the Lie derivative acting on tensor densities.
\end{proposition}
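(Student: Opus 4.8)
The plan is to derive the Euler-Poincaré equations \eqref{EPAut_trivial} by computing the ad$^*$ operator for the semidirect product Lie algebra $\mathfrak{X}(M)\,\circledS\,\F(M,\ou)$ and substituting into the abstract Euler-Poincaré equation \eqref{general_EP}. The key observation is that the entire statement reduces to identifying the infinitesimal coadjoint action $\operatorname{ad}^*_{(\mathbf{u},\nu)}$ on the dual Lie algebra, since the reduced variational principle $\delta\int l\,\mathrm{d}t=0$ is already known (by standard Euler-Poincaré reduction, as cited via \cite{GBRa2009,Ho2002}) to yield $\partial_t(\delta l/\delta\xi)+\operatorname{ad}^*_\xi(\delta l/\delta\xi)=0$ with $\xi=(\mathbf{u},\nu)$.

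First I would fix notation for the semidirect product structure. The Lie algebra bracket on $\mathfrak{X}(M)\,\circledS\,\F(M,\ou)$ is determined by the right action of $\Diff(M)$ on $\F(M,\O)$ by composition; at the infinitesimal level this gives the bracket
\[
[(\mathbf{u}_1,\nu_1),(\mathbf{u}_2,\nu_2)]=\bigl(-[\mathbf{u}_1,\mathbf{u}_2],\,-\mathbf{d}\nu_2\!\cdot\!\mathbf{u}_1+\mathbf{d}\nu_1\!\cdot\!\mathbf{u}_2+[\nu_1,\nu_2]\bigr),
\]
where $[\nu_1,\nu_2]$ is the pointwise bracket in $\ou$ and $\mathbf{d}\nu\!\cdot\!\mathbf{u}$ denotes the directional derivative (Lie derivative of the $\ou$-valued function along $\mathbf{u}$), with signs dictated by the right-invariant convention used in \eqref{EP-eq}. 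Next I would compute $\operatorname{ad}^*$ by pairing against $(\delta l/\delta\mathbf{u},\delta l/\delta\nu)\in\mathfrak{X}(M)^*\times\F(M,\ou)^*$, using the $L^2$ pairing as in the EPDiff discussion.

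The main computation is integration by parts on each of the three bracket terms. Pairing the $\mathfrak{X}(M)$-component against $\delta l/\delta\mathbf{u}$ and transposing the diffeomorphism action reproduces the Lie derivative term $\pounds_\mathbf{u}(\delta l/\delta\mathbf{u})$, exactly as in the pure EPDiff case \eqref{EP-eq}. The cross term arising from $\mathbf{d}\nu\!\cdot\!\mathbf{u}$, when paired with $\delta l/\delta\nu$ and moved onto the $\mathbf{u}$-slot, yields the coupling term $\frac{\delta l}{\delta\nu}\!\cdot\!\mathbf{d}\nu$ in the first equation; this is the transpose of the Lie derivative of $\nu$ with respect to $\mathbf{u}$, so it contributes a force on the velocity momentum. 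For the second equation, the advection term $\pounds_\mathbf{u}(\delta l/\delta\nu)$ comes from transposing $\mathbf{d}\nu\!\cdot\!\mathbf{u}$ in the $\nu$-slot (the function momentum is Lie-dragged as a density), while the pointwise bracket $[\nu_1,\nu_2]$ transposes to the fiberwise coadjoint term $\operatorname{ad}^*_\nu(\delta l/\delta\nu)$.

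**The hard part will be** tracking signs and the precise tensor-density weights consistently, since $\delta l/\delta\mathbf{u}$ is a one-form density while $\delta l/\delta\nu$ is an $\ou^*$-valued density, and the operator $\pounds_\mathbf{u}$ must act with the correct weight in each equation (on one-form densities in the first, on $\ou^*$-valued densities in the second). I would verify the weights by checking the variational pairing directly rather than guessing, as it is easy to drop a $\operatorname{div}\mathbf{u}$ term. A useful consistency check is that setting $\nu\equiv 0$ must collapse the system to EPDiff \eqref{EP-eq}, and that the second equation alone (with $\mathbf{u}\equiv0$) must reduce to the Lie-Poisson equation on $\F(M,\ou)^*$ for the gauge group, i.e.\ the Euler-Poincaré equation with only the $\operatorname{ad}^*_\nu$ term. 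Once the ad$^*$ formula is established with correct signs, the proposition follows immediately from \eqref{general_EP}.
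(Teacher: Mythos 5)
Your proposal is correct and follows essentially the same route as the paper's proof: one computes the Lie bracket on the semidirect-product Lie algebra $\mathfrak{X}(M)\,\circledS\,\mathcal{F}(M,\mathfrak{o})$, dualizes through the $L^2$ pairing to obtain $\operatorname{ad}^*_{(\mathbf{u},\nu)}(\mathbf{m},C)=\left(\pounds_\mathbf{u}\mathbf{m}+C\cdot\mathbf{d}\nu,\ \pounds_\mathbf{u}C+\operatorname{ad}^*_\nu C\right)$, and substitutes this into the abstract Euler--Poincar\'e equation to get \eqref{EPAut_trivial}. Your bracket, with cross terms $-\mathbf{d}\nu_2\cdot\mathbf{u}_1+\mathbf{d}\nu_1\cdot\mathbf{u}_2$, is exactly the one consistent with this $\operatorname{ad}^*$ formula, and your insistence on verifying the density weights in the integrations by parts targets precisely the step where a $\operatorname{div}\mathbf{u}$ term is most easily lost.
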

\rem{ 
\begin{proof} From the general expression \eqref{general_EP} of the Euler-Poincar\'e equation, it suffices to compute the infinitesimal coadjoint operator $\operatorname{ad}^*$ associated to the semidirect product $\operatorname{Diff}(M)\,\circledS\,\mathcal{F}(M,\mathcal{O})$. The Lie bracket on the Lie algebra $\mathfrak{X}(M)\,\circledS\,\mathcal{F}(M,\mathfrak{o})$ is given by
\[
\left[({\bf u},{\nu}),({\bf
w},{\omega})\right]=\left(-\pounds_{\bf u} {\bf
w},\pounds_{\bf u\,}{\omega}-\pounds_{\bf
w}{\nu}+\left[{\nu},{\omega}\right]\right),
\]
where $\pounds$ denotes the Lie derivative acting on $\mathfrak{X}(M)$ and $\mathcal{F}(M,\mathfrak{o})$, and $\left[{\nu},{\omega}\right]$ denotes the Lie bracket on $\mathcal{F}(M,\mathfrak{o})$ induced from the Lie bracket on $\mathfrak{o}$. We consider the dual Lie algebra given by
\[
\mathfrak{X}(M)^*\times\mathcal{F}(M,\mathfrak{o})^*=
\left(\Omega^1(M)\otimes\operatorname{Den}(M)\right)\times\big(\mathcal{F}(M,\mathfrak{o}^*)\otimes\operatorname{Den}(M)\big),
\]
where $\operatorname{Den}(M)$ is the space of densities on $M$. Using the $L^2$ duality pairing, one obtains the expression
\[
\operatorname{ad}^*_{(\mathbf{u},{\nu})}(\mathbf{m},C)=(\pounds_\mathbf{u}\mathbf{m}+C\cdot\mathbf{d}{\nu},\pounds_\mathbf{u}C+\operatorname{ad}^*_{{\nu}}C),
\]
where $\operatorname{ad}^*$ on the right hand side denotes the infinitesimal coadjoint action associated to $\mathfrak{o}$. This shows that the equations \eqref{EPAut_trivial} are the Euler-Poincar\'e equations on the automorphism group of the trivial principal bundle $M\times\mathcal{O}$.
\end{proof}
}        
One can similarly write the Lie-Poisson equations associated to a
right-invariant Hamiltonian $H$ defined on $T^*\mathcal{A}ut(P)$; see \cite{HoTr2008}.
\rem{ 
They are given by
\begin{equation}
\left\{\begin{array}{l}
\vspace{0.2cm}\displaystyle\partial_t\mathbf{m}+\pounds_{\frac{\delta h}{\delta\mathbf{m}}}\mathbf{m}+C\cdot\mathbf{d}\frac{\delta h}{\delta C}=0\\
\displaystyle\partial_tC+\pounds_{\frac{\delta h}{\delta\mathbf{m}}}C+\operatorname{ad}^*_{\frac{\delta h}{\delta C}}C=0,
\end{array}\right.
\end{equation}
where $h:\mathfrak{X}(M)^*\times\mathcal{F}(M,\mathfrak{o})^*\rightarrow\mathbb{R}$ is the reduced Hamiltonian and
$(\mathbf{m},C)\in\X(M)^*\x\F(M,\ou)^*$.
}        

\begin{remark}[Duality parings] {\rm In this paper, by the dual of a function space
$\mathcal{F}$ we mean a space $\mathcal{F}^*$ in nondegenerate
duality with $\mathcal{F}$ with respect to an $L^2$ pairing
$\langle\cdot,\cdot\rangle$. We  do not specify the precise
regularity involved, it will be clear that different situations
require different regularity properties, in order to obtain
well-defined expressions. For example, in the formulation of the
Euler-Poincar\'e equations \eqref{EPAut_trivial}, we have chosen the
regular dual, that is
\[
\mathfrak{X}(M)^*\times\mathcal{F}(M,\mathfrak{o})^*=
\left(\Omega^1(M)\otimes\operatorname{Den}(M)\right)\times\big(\mathcal{F}(M,\mathfrak{o}^*)\otimes\operatorname{Den}(M)\big),
\]
whereas, the momentum maps and singular solutions will take values in the topological dual space. }
\end{remark}

\begin{remark}[Pairing notations] \rm
Several notational issues emerge  from the various pairings appearing in this paper. However, angle brackets (e.g. $\langle\mathbf{u},\mathbf{v}\rangle$) and the dot pairing (e.g. $\sigma\cdot\nu$) will be used throughout the text depending only on convenience purposes. We hope that this will not generate confusion.
\end{remark}

In order to write the $\operatorname{EP}\!\mathcal{A}ut$ equation more explicitly, it is convenient to fix a Riemannian metric $\mathsf{g}$ on $M$. Let $\mathbf{v}\in\mathfrak{X}(M)$ and ${c}\in\mathcal{F}(M,\mathfrak{o}^*)$ such that $\delta l/\delta\mathbf{u}=\mathbf{v}^\flat\otimes\mu_M$ and $\delta l/\delta\nu= \sigma\otimes\mu_M$,
where $\mu_M$ is the Riemannian volume. Then equation \eqref{EPAut_trivial} can be equivalently written as
\begin{equation}
\left\{\begin{array}{l}
\vspace{0.2cm}\displaystyle\partial_t\mathbf{v}+\nabla_\mathbf{u}\mathbf{v}+\nabla\mathbf{u}^\mathsf{T}\cdot\mathbf{v}+\mathbf{v}\operatorname{div}(\mathbf{u})+({\sigma}\cdot\mathbf{d}{\nu})^\sharp=0\\
\displaystyle\partial_t{\sigma}+\mathbf{d}{\sigma}\cdot\mathbf{u}+{\sigma}\operatorname{div}(\mathbf{u})+\operatorname{ad}^*_{{\nu}}{\sigma}=0,
\end{array}\right.
\end{equation}
where $\sharp$ denotes the sharp operator associated to the
Riemannian metric $\sf g$. For $\delta l/\delta\nu=0$, this system reduces to an \emph{Euler-Poincar\'e equation on the diffeomorphisms} (EPDiff); see \cite{HoMa2004,HoMaRa98}.

\begin{remark}[Related physical systems]
{\rm Similar structures as in the equations \eqref{EPAut_trivial}
also appear in the geometric characterization of the dynamics of Euler-Yang-Mills fluids (\cite{GBRa2008a} and \cite{Vi08}) and
complex fluids (\cite{Ho2002}).
In the latter case the equations are
obtained by introducing advected quantities that are acted on by the
automorphism group through affine representations
(\cite{GBRa2009}).}
\end{remark}

\subsection{Kaluza-Klein Lagrangians and Hamiltonians}
When $l$ is a quadratic function of $(\mathbf{u},{\nu})$,
then the EP$\mathcal{A}ut$ equation is the Eulerian description of
geodesic motion on the automorphism group. A natural Lagrangian
arising in this context is the Kaluza-Klein Lagrangian given by
\begin{equation}\label{KK_Lagrangian_trivial}
l(\mathbf{u},{\nu})=\frac{1}{2}\|\mathbf{u}\|^2_1+\frac{1}{2}\|{\nu}+A\!\cdot\!\mathbf{u}\|^2_2,
\end{equation}
relative to a given one-form $A\in\Omega^1(M,\mathfrak{o})$ and to inner product norms $\|\,\|_1$ and $\|\,\|_2$ on
$\mathfrak{X}(M)$ and $\mathcal{F}(M,\mathfrak{o})$, respectively.
The inner products are usually given by positive symmetric
differential operators $Q_1:\mathfrak{X}(M)\rightarrow\mathfrak{X}(M)^*$ and $Q_2:\mathcal{F}(M,\mathfrak{o})\rightarrow\mathcal{F}(M,\mathfrak{o})^*$, that is, one defines the
inner products
\[
\langle \mathbf{u},\mathbf{v}\rangle_1:=\int_M\mathbf{u}\cdot Q_1\mathbf{v}\quad\text{and}\quad \langle {\nu},{\om}\rangle_2:=\int_M{\nu}\cdot Q_2 {\om},
\]
When these differential operators are
invertible, the associated Hamiltonian $h$ on $\X(M)^*\x\F(M,\mathfrak{o})^*$
may be found by Legendre transforming
\begin{align*}
{\bf m}&=\frac{\delta l}{\delta {\bf u}}=Q_1{\bf u}+Q_2(A\cdot\mathbf{u}+{\nu})\cdot A\in\Omega^1(M)\otimes\operatorname{Den}(M)
\\
{\sigma}&=\frac{\delta l}{\delta {\nu}}=Q_2(A\cdot \mathbf{u}+{\nu})\in\mathcal{F}(M,\mathfrak{o}^*)\otimes\operatorname{Den}(M).
\end{align*}
so that
\begin{equation}\label{EPAut-Ham}
h(\mathbf{m},{\sigma})=\frac{1}{2}\int_M
\left(\mathbf{m}-{\sigma}\!\cdot\!A\right)\cdot Q_1^{-1\!}\left(\mathbf{m}-{\sigma}\!\cdot\!A\right)+\frac{1}{2}\int_M
{\sigma}\cdot Q_2^{-1}{\sigma}.
\end{equation}
For example, if $M=\mathbb{R}^n$ and $G_i$ denotes the Green's
function of $Q_i$, then the EP$\!{\mathcal{A}ut}$ reduced
Hamiltonian
$h:\mathfrak{aut}(P)^*\simeq\mathfrak{X}(\mathbb{R}^n)^*\times\mathcal{F}(\mathbb{R}^n,\mathfrak{o})^*\to\mathbb{R}$
is written as
\begin{align}\label{h_example}
h({\bf m},{\sigma})\ &= \frac12\iint G_1(x-x')\left({\bf m}(x)-{\sigma}(
x)\!\cdot\!A(x)\right)\cdot\left({\bf m}(x')-{\sigma}(x')\!\cdot\!A(x')\right) {\rm d}^nx\ {\rm d}^n x'\nonumber
\\
&\hspace{2cm}+ \frac12\iint G_2(x-x')\, {\sigma}(x)\cdot {\sigma}(x')\ {\rm d}^n x\ {\rm d}^nx',
\end{align}
thereby producing the Lie-Poisson equations
\begin{equation}\label{LPAut_trivial}
\left\{\begin{array}{l}
\vspace{0.2cm}\displaystyle
\partial_t\mathbf{m}+\pounds_{G_1*\left({\bf m}-{\sigma}\cdot A\right)}\ {\bf m}+{\sigma}\!\cdot\! \mathbf{d}\big(G_2*{\sigma}-A\cdot G_1*({\bf m}-{\sigma}\!\cdot\!A)\big)=0\\
\partial_t{\sigma}+\pounds_{G_1*\left({\bf m}-{\sigma}\cdot A\right)\,}{\sigma}+{\rm ad}^*_{\,G_2\,*\,{\sigma} -A\,\cdot\, G_1\,*\,\left({\bf
m}-{\sigma}\cdot A\right)}\ {\sigma}=0.
\end{array}\right.
\end{equation}
In \cite{HoTr2008}, these equations were derived in the special case when $A=0$, i.e. in the absence of external magnetic { Yang-Mills} fields.

\begin{remark}[Relevant specializations]\normalfont When the manifold $M$ is one
dimensional, the Lie group is $\mathcal{O}=S^1$, and $A=0$, we
recover the two-component Camassa-Holm equation (CH2)
\cite{ChLiZh2005,Ku2007} by choosing the differential operators
$Q_1=(1-\alpha^2\partial^2_x)$ and $Q_2=1$. The modified
two-component system (MCH2) \cite{HoOnTr2009} requires
$Q_1=(1-\alpha_1^2\partial^2_x)$ and
$Q_2=(1-\alpha_2^2\partial^2_x)$. The higher dimensional and
anisotropic versions studied in \cite{HoTr2008} are obtained by
choosing an arbitrary manifold $M$, an arbitrary group
$\mathcal{O}$, and $A=0$. The corresponding choices for the
differential operators are $Q_1=(1-\alpha^2\Delta), Q_2=1$ (for
$n$-CH2) and $Q_1=(1-\alpha^2_1\Delta), Q_2=(1-\alpha_2^2\Delta)$
(for $n$-MCH2).
\end{remark}

\begin{remark}[The Kelvin-Noether circulation theorem]\normalfont The Kelvin-Noether
theorem is a version of the Noether theorem that holds for solutions
of the Euler-Poincar\'e equations \cite{HoMaRa98}. Let $(\mathbf{u},{\nu})$ be a
solution of the EP$\mathcal{A}ut$ equation \eqref{EPAut_trivial},
and let $\rho$ be a density variable satisfying the equation
$\partial_t\rho+\pounds_{\mathbf{u}}\rho=0$. Then we have
\begin{equation}\label{KN_trivial}
\frac{d}{dt}\oint_{c_t}\frac{1}{\rho}\frac{\delta l}{\delta \mathbf{u}}=-\oint_{c_t}\frac{1}{\rho}\frac{\delta l}{\delta {\nu}}\!\cdot\!\mathbf{d}{\nu},
\end{equation}
where $c_t$ is a loop in $M$ moving with the fluid velocity $\mathbf{u}$.
In the special case $\mathcal{O}=S^1$, one has $\rho:={\delta l}/{\delta{\nu}}$ so that the right-hand side vanishes and Kelvin-Noether theorem yields circulation conservation.
\end{remark}

\rem{ 
\subsection{Singular solutions}

In this paragraph we give the formula for the singular solution of
the EP$\mathcal{A}ut$-equation \eqref{EPAut_trivial}, whose momentum
map interpretation will be given later. A direct check shows that,
when the Hamiltonian \eqref{h_example} is induced by a Lagrangian
given by the kinetic energy of an $H^1$ metric, the above
Lie-Poisson equations \eqref{LPAut_trivial} admit the singular
solutions
\begin{equation}\label{sing_sol}
\big({\bf m},C\big)=\sum_{i=1}^N\left(\int_S{\bf
P}_i(s,t)\,\delta\!\left(x-{\bf Q}_i(s,t)\right)\,{\rm d}^ks,\int_S\mu_i(s,t)\,\delta\!\left(x-{\bf Q}_i(s,t)\right)\,{\rm d}^ks\right),
\end{equation}
where $S$ is a $k$-dimensional submanifold of $\mathbb{R}^n$ and
$(\mathbf{Q}_i,\mathbf{P}_i,\mu_i)$ is a curve in
$T^*\operatorname{Emb}(S,\mathbb{R}^n)\times\mathcal{F}(S,\mathfrak{o})^*$.
Here $\operatorname{Emb}(S,\mathbb{R}^n)$ denotes the open subset of
$\mathcal{F}(S,\mathbb{R}^n)$ consisting of all embeddings of $S$
into $\mathbb{R}^n$. By fixing a volume form ${\rm d}^ks$ on $S$, we
can identify the cotangent bundle
$T^*\operatorname{Emb}(S,\mathbb{R}^n)$ with
$\operatorname{Emb}(S,\mathbb{R}^n)\times
\mathcal{F}(S,(\mathbb{R}^n)^*)\ni (\mathbf{Q},\mathbf{P})$.

By inserting the expression \eqref{sing_sol} in the
EP$\mathcal{A}ut$ equation \eqref{LPAut_trivial}, one obtains the
equations
\[
\dot{\mathbf{Q}}_i=\frac{\partial H_N}{\partial\mathbf{P}_i},\quad
\dot{\mathbf{P}}_i=-\frac{\partial H_N}{\partial\mathbf{Q}_i},\quad
\dot\mu_i=-\operatorname{ad}^*_\frac{\delta H_{\!N}}{\delta
\mu_i}\mu_i\qquad\text{(no sum)},
\]
where $H_N:
\big(T^*\operatorname{Emb}(S,\mathbb{R}^n)\times\mathcal{F}(S,\mathfrak{o})^*\big)^N\rightarrow\mathbb{R}$
is the collective Kaluza-Klein Hamiltonian defined by
\begin{multline*}
H_N(\mathbf{Q}_i,\mathbf {P}_i,\mu_i)= \frac12\,\sum_{i,j=1}^N \iint
G_2({\bf Q}_i(s)-{\bf Q}_j(s'))\,\mu_i(s)\cdot \mu_j(s')\,{\rm
d}^ks\,{\rm d}^ks'
\\
+
 \frac12\,\sum_{i,j=1}^N \iint\! G_1({\bf
Q}_i(s)-{\bf Q}_j(s')) \,\big({\bf P}_i(s)-\mu_i(s)\cdot {A({\bf
Q}}^i(s))\big)\,\cdot\, \big({\bf P}_j(s')-\mu_j(s')\cdot{A({\bf
Q}}^j(s'))\big) \, {\rm d}^ks\,{\rm d}^ks'.
\end{multline*}
The expression of the singular solutions and the collective Hamiltonian $H_N$ will be explained geometrically in terms of a momentum map in the next section.
}       

\subsection{Momentum maps}\label{Sec:DualPairTrivial}
{
\subsubsection{General setting: the Kaluza-Klein configuration space}
}

In order to characterize the geometry of singular solutions, we
consider their dynamics to take place in the phase space associated
to a Kaluza-Klein configuration manifold
\begin{equation}\label{QKK_trivial}
Q_{KK}=\operatorname{Emb}(S,M)\times\mathcal{F}(S,\mathcal{O})\ni
(\mathbf{Q},\theta)
\end{equation}
 and we consider the left action of
$(\eta,\chi)\in
\mathcal{A}ut(P)\simeq\operatorname{Diff}(M)\,\circledS\,\mathcal{F}(M,\mathcal{O})$
defined by
\begin{equation}\label{left_action_trivial}
(\eta,\chi)(\mathbf{Q},\theta):=(\eta\circ\mathbf{Q},(\chi\circ\mathbf{Q})\theta).
\end{equation}
Since $\operatorname{Emb}(S,M)$ is an open subset of
$\mathcal{F}(S,M)$, its tangent space at $\mathbf{Q}$ consists of
vector fields $V_\mathbf{Q}:S\rightarrow TM$ along $\mathbf{Q}$,
that is, we have $V_{\mathbf{Q}}(s)\in T_{\mathbf{Q}(s)}M$. In the
same way, by fixing a volume form ${\rm d}^ks$ on $S$, the cotangent
space at the embedding $\mathbf{Q}$ consists of one-forms
$\mathbf{P}_{\mathbf{Q}}:S\rightarrow T^*M$ along $\mathbf{Q}$. Note that when $M= \mathbb{R}  ^n $ then the cotangent bundle is the product $T^*\operatorname{Emb}(S,\mathbb{R}  ^n )=\operatorname{Emb}(S,\mathbb{R}  ^n )\times \mathcal{F} (S, \mathbb{R}  ^n )$ and therefore, we can denote an element in the cotangent bundle by a
pair $(\mathbf{Q},\mathbf{P})$. However, for an arbitrary manifold
$M$, $T^*\operatorname{Emb}(S,M)$ is not necessarily a product, and
one should use the notation $ \mathbf{P} _{ \mathbf{Q} } $. The tangent space to
$\mathcal{F}(S,\mathcal{O})$ at $\theta$ consists of functions
$v_\theta:S\rightarrow T\mathcal{O}$ along $\theta$. Similarly, when
a volume form ${\rm d}^ks$ is fixed on $S$, the cotangent bundle
$T^*\F(S,\O)$ at
$\theta$ consists of functions $\kappa_\theta:S\rightarrow
T^*\mathcal{O}$ covering $\theta$.

\subsubsection{Left-action momentum map and singular solutions} We
now compute the momentum map associated to the action
\eqref{left_action_trivial} cotangent lifted to $T^*Q_{KK}$. Given
$(\mathbf{u},{\nu})\in
\mathfrak{X}(M)\,\circledS\,\mathcal{F}(M,\mathfrak{o})$, the
infinitesimal generator associated to the left action
\eqref{left_action_trivial} reads
\[
(\mathbf{u},{\nu})_{Q_{KK}}(\mathbf{Q},\theta)=(\mathbf{u}\circ\mathbf{Q},({\nu}\circ\mathbf{Q})\theta).
\]
Given a Lie group $G$ acting on a configuration manifold $Q$, the momentum map $\mathbb{J}:T^*Q\rightarrow\mathfrak{g}^*$ for the cotangent lifted action to $T^*Q$ reads
\[
\langle \mathbb{J}(\alpha_q),\xi\rangle=\langle\alpha_q,\xi_Q(q)\rangle,
\]
where $\alpha_q\in T^*Q$, $\xi\in\mathfrak{g}$ and $\xi_Q$ is the infinitesimal generator of the action of $\mathfrak{g}$ on $Q$ associated to the Lie algebra element $\xi$. By applying this formula to our case, $\g=\mathfrak{aut}(P)$, we get the expression $\mathbf{J}_L:T^*Q_{KK}\rightarrow\mathfrak{X}(M)^*\times\mathcal{F}(M,\mathfrak{o})^*$,
\begin{equation}\label{left_momap_trivial}
\mathbf{J}_L\left(\mathbf{P}_\mathbf{Q},\kappa_\theta\right)=
\left(\int_S\mathbf{P}_\mathbf{Q}(s)\delta(x-\mathbf{Q}(s)){\rm d}^ks,\int_S\kappa_{\theta}(s)\theta(s)^{-1}\delta(x-\mathbf{Q}(s)){\rm d}^ks\right).
\end{equation}
We shall show that this momentum map recovers the singular solution { of \eqref{LPAut_trivial}
in \cite{HoTr2008}, in the case $A$=0}. First, one
observes that $\mathbf{J}_L$ is invariant under the \textit{right\,}
action of $\mathcal{F}(S,\mathcal{O})$ on its cotangent bundle.
Thus, $\mathbf{J}_L$ induces a map $\widetilde{\mathbf{J}}_L$ on the
reduced space
\[
T^*Q_{KK}/\mathcal{F}(S,\mathcal{O})\simeq
T^*\operatorname{Emb}(S,M)\times\mathcal{F}(S,\mathfrak{o})^*
\]
that is obtained by replacing $\mu(s)=\kappa_\theta(s)\theta^{-1}(s)\in\mathcal{F}(S,\mathfrak{o})^*$ in \eqref{left_momap_trivial}. 
The reduced Poisson structure on
$T^*\operatorname{Emb}(S,M)\times\mathcal{F}(S,\mathfrak{o})^*$ is
the sum of the canonical Poisson bracket on
$T^*\operatorname{Emb}(S,M)$ and the right Lie-Poisson bracket on
$\mathcal{F}(S,\mathfrak{o})^*$, that is, if $M= \mathbb{R}  ^n $, we have
\begin{equation}\label{reduced_Poisson}
\{F,G\}(\mathbf{Q},\mathbf{P},\mu)=
\int_S\left(\frac{\delta F}{\delta\mathbf{Q}}\!\cdot\!\frac{\delta G}{\delta\mathbf{P}}-\frac{\delta F}{\delta\mathbf{P}}\!\cdot\!\frac{\delta G}{\delta\mathbf{Q}}\right){\rm d}^ks+\int_S\mu\cdot\left[\frac{\delta F}{\delta\mu},\frac{\delta G}{\delta\mu}\right]{\rm d}^ks.
\end{equation}
Since the right action commutes with the left action of
$\operatorname{Diff}(M)\,\circledS\,\mathcal{F}(M,\mathcal{O})$ on
$T^*Q_{KK}$, the map $\widetilde{\mathbf{J}}_L$ is a momentum map
relative to the Poisson structure \eqref{reduced_Poisson} and to the
induced left action on the reduced space
$T^*\operatorname{Emb}(S,M)\times\mathcal{F}(S,\mathfrak{o})^*$.
This recovers the result of \cite{HoTr2008}, by reduction of the
canonical structure. This also shows that considering
$M\times\mathcal{O}$ as a trivial principal bundle is very natural
in this context. Indeed, the momentum map $\mathbf{J}_L$ is simply
associated to a cotangent lifted action. For a certain class of
EP$_{\!}\Aut$ Hamiltonians $h$, such as (\ref{EPAut-Ham}), it is
possible to define the collective Hamiltonian
\[
H:=h\circ\mathbf{J}_L: T^*Q_{KK}\to\Bbb{R}
\] 
which produces collective Hamiltonian dynamics; see \cite{HoMa2004} for the special case $\mu(s)\equiv 0$. This is for example the case for Hamiltonians
associated to $H^1$ metrics.

Because of its equivariance, the map $\mathbf{J}_L$ is also a
Poisson map with respect to the canonical Poisson structure on the
canonical cotangent bundle $T^*Q_{KK}$ and the right Lie-Poisson
structure on
$\mathfrak{X}(M)^*\times\mathcal{F}(M,\mathfrak{o})^*$.
Thus, any solution $(\mathbf{P}_\mathbf{Q},\kappa_\theta)$ of the
canonical Hamilton's equations on $T^*Q_{KK}$ associated to $H=h\circ\mathbf{J}_L$ projects formally,
via $\mathbf{J}_L$, to a (measure-valued) solution of the
Lie-Poisson equation associated to $h$. One can also use the
momentum map $\widetilde{\mathbf{J}}_L$. In this case, one has to
solve Hamilton's equations on
$T^*\operatorname{Emb}(S,M)\times\mathcal{F}(S,\mathfrak{o})^*$,
given by
\[
\dot{\mathbf{Q}}=\frac{\partial H}{\partial\mathbf{P}},\quad
\dot{\mathbf{P}}=-\frac{\partial H}{\partial\mathbf{Q}},\quad
\dot\mu=-\operatorname{ad}^*_{\delta H/\delta \mu}\mu,
\]
when $M= \mathbb{R}  ^n $.

\begin{remark}[Physical interpretation]{\rm
This
collective Hamiltonian extends the one in \cite{HoTr2008} to  consider an external Yang-Mills magnetic field, given by the
potential $A$ through the relation $B=\mathbf{d}^A A$. This may be of interest
in 
the gauge theory of fluids with non-Abelian interactions, which plays a fundamental role in certain areas of condensed matter physics
(cf. e.g.
\cite{HoKu88,Ho2002,GBRa2009}). }
\end{remark}

\begin{remark}[Existence of singular solutions]{\rm The
existence of the momentum map ${\bf J}_L$ does not guarantee that
the latter is also a solution of the system. For example, the use of
$L^2$ norms in the EP$\Aut$ Lagrangian \eqref{KK_Lagrangian_trivial}
prevents singularities. As a consequence, the collective Hamiltonian
$H=h\circ {\bf J}_L$ may not be defined, since it requires the
Hamiltonian $h$ to be well defined on the image of the momentum map.
However, the use of appropriate norms, such as $H^1$, allows for
singular solutions and this is one of the reasons why we consider
this particular case. Also, the absence of a magnetic field in the
Abelian case returns the MCH2 equations (cf. \cite{HoOnTr2009}),
which are known to produce singularities in finite time. Whether
this steepening phenomenon persists upon introducing a static
magnetic field, this is an open question that deserves future
investigation. }
\end{remark}

\subsubsection{Right-action momentum map and Noether's Theorem}
We now consider the right action of
$(\gamma,\beta)\in\mathcal{A}ut(P_S)=\operatorname{Diff}(S)\,\circledS\,\mathcal{F}(S,\mathcal{O})$
on $Q_{KK}$ defined by
\begin{equation}\label{right_action_trivial}
(\mathbf{Q},\theta)(\gamma,\beta):=(\mathbf{Q}\circ\gamma,(\theta\circ\gamma)\beta).
\end{equation}
Given $(\mathbf{v},\ze)\in\X(S)\,\circledS\,\F(S,\O)$, the infinitesimal generator is
\[
(\mathbf{v},\zeta)_{Q_{KK}}(\mathbf{Q},\theta)=(\mathbf{d}\mathbf{Q}\!\cdot\!\mathbf{v},\mathbf{d}\theta\!\cdot\!\mathbf{v}+\theta\zeta),
\]
thus, the momentum map associated to the cotangent lifted action is
\begin{equation}\label{momap_right_trivial}
\mathbf{J}_R:T^*Q_{KK}\rightarrow\mathfrak{X}(S)^*\times\mathcal{F}(S,\mathfrak{o})^*,\quad\mathbf{J}_R\left(\mathbf{P}_\mathbf{Q},\kappa_\theta\right)
=\left(\mathbf{P}_\mathbf{Q}\!\cdot\!\mathbf{d}\mathbf{Q}+\kappa_\theta\!\cdot\!\mathbf{d}\theta,\theta^{-1}\kappa_\theta\right).
\end{equation}
which in turn identifies a \textit{Clebsch representation} in the sense of \cite{MaWe83}.
Since the collective Hamiltonian $H$ on $T^*Q_{KK}$ is invariant under the cotangent lift of the right action \eqref{right_action_trivial}, Noether's Theorem asserts that
\[
\frac{d}{dt}\left(\mathbf{P}_\mathbf{Q}\!\cdot\!\mathbf{d}\mathbf{Q}+\kappa_\theta\!\cdot\!\mathbf{d}\theta\right)=0\quad\text{and}\quad \frac{d}{dt}(\theta^{-1}\kappa_\theta)=0,
\]
for any solutions $(\mathbf{P}_\mathbf{Q},\kappa_\theta)\in
T^*Q_{KK}$ of Hamilton's equations.
To summarize the situation, we have found the following dual pair
structure

\begin{picture}(150,100)(-70,0)%
\put(115,75){$T^{\ast} Q_{KK}$}

\put(90,50){$\mathbf{J}_L$}

\put(160,50){$\mathbf{J}_R$}

\put(52,15){$\mathfrak{aut}(M\times\mathcal{O})^*$
}

\put(150,15){$\mathfrak{aut}(S\times\mathcal{O})^*$
}

\put(130,70){\vector(-1, -1){40}}

\put(135,70){\vector(1,-1){40}}

\end{picture}\\
where
$\mathfrak{aut}(M\times\mathcal{O})^*=\mathfrak{X}(M)^*\times\mathcal{F}(M,\mathfrak{o})^*$
and analogously for $\mathfrak{aut}(S\times\mathcal{O})^*$. The geometric meaning of these dual pairs will be
presented later in a more general setting.

\rem{ 
\begin{remark}[Clebsch representations]\label{clebsch}\normalfont
As we have seen above, if $(\mathbf{P}_\mathbf{Q}, \kappa_\theta)$ is a solution of canonical Hamilton's equations on $T^*Q_{KK}$, then its image by $ \mathbf{J} _L$, resp. $\mathbf{J} _R$, is a solution of Lie-Poisson equations on $\mathfrak{aut}(M\times\mathcal{O})^*$, resp. $\mathfrak{aut}(S\times\mathcal{O})^*$. We say that these momentum maps provide a \textit{Clebsch representation} for the Lie-Poisson systems.
This is a well known process in fluid dynamics
\cite{MaWe83}. For example, if $\mathcal{F}(S,M)$ denotes
the manifold of maps from the manifold $S$ to another manifold $M$, then
the momentum map
\begin{eqnarray*}
T^*\mathcal{F}(S,M)&\to&\mathfrak{X}(S)^*
\\
(p,q)&\mapsto & p\cdot\mathbf{d}q
\end{eqnarray*}
is a Clebsch representation of the fluid momentum $
\mathbf{m}\in\mathfrak{X}(S)^*$, for a fluid flow (such as the EPDiff flow)
on the manifold $S$. The most celebrated case
is when $M=\mathbb{R}$. If, on the other hand, a fluid flow
takes place on the configuration manifold $M$, then the
momentum map
\begin{eqnarray*}
T^*\mathcal{F}(S,M)&\to&\mathfrak{X}(M)^*
\\
( q,p)&\mapsto & \int_S p(s)\,\delta(x- q(s)){\rm d}s
\end{eqnarray*}
is the Clebsch representation of the fluid momentum
$\mathbf{m}\in\mathfrak{X}(M)^*$. Analogously, the two legs of the
dual pair above provide two different Clebsch representations
associated to the two different fluid particle configuration bundles
$M\times\mathcal{O}$ and $S\times\mathcal{O}$.
\end{remark}
}       

\begin{remark}[The case $S=M$]{\rm
When $S=M$, then the embeddings of $S$ into $M$ are the
diffeomorphisms of $M$, that is,
$\operatorname{Emb}(S,M)=\operatorname{Diff}(M)$ and $Q_{KK}=
\operatorname{Diff}(M)\times\mathcal{F}(M,\mathcal{O})$. Moreover,
in this case the left and right actions \eqref{left_action_trivial},
\eqref{right_action_trivial} recover left and right translations on
the automorphism group. Therefore, the Kaluza-Klein configuration
manifold can be identified with the group
$\operatorname{Diff}(M)\,\circledS\,\mathcal{F}(M,\mathcal{O})\simeq
\mathcal{A}ut(M\times\mathcal{O})$, and the dual pair recovers the usual body and
spatial representations for mechanical systems on Lie groups. More
precisely, $\mathbf{J}_L$ recovers the Lagrange-to-Euler map, while
$\mathbf{J}_R$ corresponds to the conserved  momentum density.

}
\end{remark}


\section{EP$\!\Aut$ flows on non-trivial principal bundles}\label{Sec:NonTrivialEPAut}

{
\subsection{Basic definitions}
}
In this section we consider an arbitrary principal $\mathcal{O}$-bundle $\pi:P\rightarrow M$. Recall that the Lie algebra $\mathfrak{aut}(P)$ of the automorphism group consists of equivariant vector fields $U$ on $P$, that is, we have
\[
T\Phi_g\circ U=U\circ\Phi_g,\quad\text{for all $g\in\mathcal{O}$.}
\]
In the case of non-trivial bundles, it is necessary to introduce a principal connection $\mathcal{A}$ to split the tangent space into its vertical and horizontal subspaces. Recall that a principal connection is given by an $\mathfrak{o}$-valued one form $\mathcal{A}\in \Omega^1(P,\mathfrak{o})$ such that
\[
\Phi_g^*\mathcal{A}=\operatorname{Ad}_{g^{-1}}\circ \mathcal{A}\quad\text{and}\quad\mathcal{A}(\xi_P)=\xi,
\]
where $\xi_P$ is the infinitesimal generator associated to the Lie algebra element $\xi$.
Using a principal connection, we obtain an isomorphism
\begin{equation}\label{isomorphism_A}
\mathfrak{aut}(P)\rightarrow\mathfrak{X}(M)\times\mathcal{F}_\mathcal{O}(P,\mathfrak{o}),\quad U\mapsto ([U],\mathcal{A}(U)),
\end{equation}
where $[U]\in\mathfrak{X}(M)$ is defined by the condition $T\pi\circ[U]=U\circ\pi$
and $\mathcal{F}_\mathcal{O}(P,\mathfrak{o})$ is the space of
$\O$-equivariant functions $\om:P\rightarrow\mathfrak{o}$, \ie $\om\circ\Phi_g=\operatorname{Ad}_{g^{-1}}\circ\, \om$, for all 
$g\in\mathcal{O}$.
The inverse of the isomorphism \eqref{isomorphism_A} reads $(\mathbf{u},\om)\mapsto \operatorname{Hor}^\mathcal{A}\mathbf{u}+\sigma(\om)$,
where $\operatorname{Hor}^\mathcal{A}$ is the horizontal-lift associated to the principal connection $\mathcal{A}$ and $\sigma(\om)$ is the vertical vector field on $P$ defined by
$\sigma(\om)(p):=\left(\om(p)\right)_P(p)$.
The dual isomorphism reads
\begin{equation}\label{isomorphism_A_dual}
\mathfrak{aut}(P)^*\rightarrow\mathfrak{X}(M)^*\times\mathcal{F}_\mathcal{O}(P,\mathfrak{o})^*,\quad \beta\rightarrow \left(\left(\operatorname{Hor}^\mathcal{A}\right)^*\beta,\mathbb{J}\circ \beta\right),
\end{equation}
where $\mathfrak{aut}(P)^*=\Omega_\mathcal{O}^1(P)\otimes\operatorname{Den}(M)$ is the space of $\mathcal{O}$-invariant one-form densities and analogously
$\F_{\O}(P,\mathfrak{o})^*=\F_{\O}(P,\mathfrak{o}^*)\otimes\operatorname{Den}(M)$. Here, $\mathbb{J}:T^*P\rightarrow\mathfrak{o}^*$ denotes the cotangent bundle momentum map, $\langle\mathbb{J}(\alpha_p),\xi\rangle=\langle\alpha_p,\xi_P(p)\rangle$.

\subsection{General EP$\mathcal{A}ut$ equations on principal bundles}\label{General_EPAut}

Given a Lagrangian $l:\mathfrak{aut}(P)\rightarrow\mathbb{R}$, the associated EP$\mathcal{A}ut$ equation is
\begin{equation}\label{EPAut_general}
\frac{\partial }{\partial t}\frac{\delta l}{\delta U}+\pounds_U\frac{\delta l}{\delta U}=0.
\end{equation}
We now split this equation using the isomorphism \eqref{isomorphism_A} associated to a fixed principal connection
$\mathcal{A}$. Therefore, the Lagrangian $l:\mathfrak{aut}(P)\rightarrow\mathbb{R}$ induces a connection dependent Lagrangian $l^\mathcal{A}:\mathfrak{X}(M)\times \mathcal{F}_\mathcal{O}(P,\mathfrak{o})\to\mathbb{R}$ defined by
\[
l(U)=l^\mathcal{A}(\mathbf{u},{\omega}),\quad \mathbf{u}=[U],\quad{\omega}=\mathcal{A}(U).
\]
Using the dual isomorphism \eqref{isomorphism_A_dual},
the left hand side of \eqref{EPAut_general} reads
\[
\frac{\partial }{\partial
t}\left(\frac{\delta l^\mathcal{A}}{\delta \mathbf{u}},\frac{\delta
l^\mathcal{A}}{\delta {\omega}}\right).
\]
In order to split the right hand side, we will need the lemma below.

Recall that, associated to the principal connection $\mathcal{A}$,
there is a covariant exterior derivative on $\mathfrak{o}$--valued differential forms on $P$ defined by $\mathbf{d}^\mathcal{A}=\hor^*\mathbf{d}$,
\ie
\[
\mathbf{d}^\mathcal{A}\om(X_1,\dots,X_{k+1})=\mathbf{d}\om(\hor(X_1),\dots,\hor(X_{k+1})),
\]
where $\operatorname{hor}$ is the horizontal part of a vector in $TP$. In particular, the curvature of $\mathcal{A}$ is given by
\[
\mathcal{B}:=\mathbf{d}^\mathcal{A}\mathcal{A}=\mathbf{d}\mathcal{A}+[\mathcal{A},\mathcal{A}].
\]

\begin{lemma}\label{lemma_split} Let $\beta \in\mathfrak{aut}(P)^*$ and $U\in\mathfrak{aut}(P)$, and fix a principal connection $\mathcal{A}$ on $P$. Then we have
for $\mathbf{u}=[U]$ and $\om=\mathcal{A}(U)$:
\begin{align*}
\left(\operatorname{Hor}^\mathcal{A}\right)^*\pounds_U\beta&=
\pounds_\mathbf{u}\left(\operatorname{Hor}^\mathcal{A}\right)^*\beta +\left(\operatorname{Hor}^\mathcal{A}\right)^*\left(\mathbb{J}\circ \beta \!\cdot\!\mathbf{d}\omega+\pounds_{\operatorname{Hor}^\mathcal{A}\mathbf{u}}(\mathbb{J}\circ \beta \!\cdot\!\mathcal{A})\right)\\
\mathbb{J}\circ\pounds_U \beta &=\pounds_{\operatorname{Hor}^\mathcal{A}\mathbf{u}}(\mathbb{J}\circ \beta )+\operatorname{ad}^*_\omega(\mathbb{J}\circ \beta ).
\end{align*}
\end{lemma}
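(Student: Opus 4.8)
The plan is to prove both identities by \emph{pulling everything back to computations on the total space $P$}, where the Lie derivative $\pounds_U$ acts on ordinary (equivariant) one-form densities and all the operators involved ($\operatorname{Hor}^\mathcal{A}$, the momentum map $\mathbb{J}$, contraction, and $\mathbf{d}^\mathcal{A}$) have concrete expressions. The central tool will be the decomposition of the equivariant vector field $U$ via the connection, namely $U=\operatorname{Hor}^\mathcal{A}\mathbf{u}+\sigma(\omega)$ with $\mathbf{u}=[U]$ and $\omega=\mathcal{A}(U)$, as recorded just before \eqref{isomorphism_A}. Substituting this into $\pounds_U\beta$ and using the derivation property of the Lie derivative under the decomposition $U=\operatorname{Hor}^\mathcal{A}\mathbf{u}+\sigma(\omega)$ should split the expression into a purely horizontal transport piece and a vertical piece governed by $\sigma(\omega)$; the latter is where the algebraic terms $\mathbb{J}\circ\beta\cdot\mathbf{d}\omega$ and $\operatorname{ad}^*_\omega(\mathbb{J}\circ\beta)$ will originate.

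For the second identity I would start from the defining relation $\langle\mathbb{J}(\beta),\xi\rangle=\langle\beta,\xi_P\rangle$ for $\xi\in\mathfrak{o}$, recalling that $\mathbb{J}\circ\beta$ is computed by pairing $\beta$ against the vertical infinitesimal generators $\xi_P$. The key is then to commute $\mathbb{J}$ past $\pounds_U$: using Cartan's magic formula $\pounds_U=\mathbf{d}\,\iota_U+\iota_U\,\mathbf{d}$ and the equivariance of the vertical vector fields, I expect $\mathbb{J}\circ\pounds_U\beta$ to produce the horizontal transport term $\pounds_{\operatorname{Hor}^\mathcal{A}\mathbf{u}}(\mathbb{J}\circ\beta)$ from the $\operatorname{Hor}^\mathcal{A}\mathbf{u}$ component of $U$, and the coadjoint term $\operatorname{ad}^*_\omega(\mathbb{J}\circ\beta)$ from the $\sigma(\omega)$ component, where the bracket $[\omega,\cdot]$ arises precisely because the Lie bracket of two vertical fields $\sigma(\omega)$ and $\sigma(\xi)$ reproduces the structure-group bracket. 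A clean way to organize this is to test against an arbitrary constant (or equivariant) $\xi\in\mathfrak{o}$ and use that $\iota_{\xi_P}$ commutes with the relevant operations up to the adjoint action encoded by $\mathcal{A}(U)=\omega$.

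For the first identity, I would apply $\left(\operatorname{Hor}^\mathcal{A}\right)^*$ to $\pounds_U\beta$ and again split $U=\operatorname{Hor}^\mathcal{A}\mathbf{u}+\sigma(\omega)$. The horizontal part $\operatorname{Hor}^\mathcal{A}\mathbf{u}$ should yield $\pounds_\mathbf{u}\left(\operatorname{Hor}^\mathcal{A}\right)^*\beta$ because $\operatorname{Hor}^\mathcal{A}$ is the horizontal lift of $\mathbf{u}$ and pullback commutes with the Lie derivative along lifted fields — this is essentially naturality of $\pounds$ under the map $\operatorname{Hor}^\mathcal{A}$. The remaining vertical contribution from $\sigma(\omega)$ is where the correction terms $\left(\operatorname{Hor}^\mathcal{A}\right)^*\bigl(\mathbb{J}\circ\beta\cdot\mathbf{d}\omega+\pounds_{\operatorname{Hor}^\mathcal{A}\mathbf{u}}(\mathbb{J}\circ\beta\cdot\mathcal{A})\bigr)$ must emerge; here I would use $\pounds_{\sigma(\omega)}\beta=\mathbf{d}\,\iota_{\sigma(\omega)}\beta+\iota_{\sigma(\omega)}\mathbf{d}\beta$ together with the identities $\iota_{\sigma(\omega)}\beta=\mathbb{J}\circ\beta\cdot\omega$ (by definition of $\mathbb{J}$) and the connection relation $\mathcal{A}(\sigma(\omega))=\omega$. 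The appearance of $\mathbf{d}\omega$ rather than $\mathbf{d}^\mathcal{A}\omega$, and the extra $\pounds_{\operatorname{Hor}^\mathcal{A}\mathbf{u}}(\cdots\cdot\mathcal{A})$ term, should reflect the failure of $\operatorname{Hor}^\mathcal{A}$ to be a Lie algebra homomorphism, i.e. the curvature $\mathcal{B}=\mathbf{d}^\mathcal{A}\mathcal{A}$ entering through $[\operatorname{Hor}^\mathcal{A}\mathbf{u},\sigma(\omega)]$ and through cross terms.

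The main obstacle I anticipate is \textbf{bookkeeping the non-horizontal corrections correctly}: $\operatorname{Hor}^\mathcal{A}$ does not preserve brackets, so $[\,\operatorname{Hor}^\mathcal{A}\mathbf{u}+\sigma(\omega),\,\cdot\,]$ generates curvature terms, and after pulling back by $\left(\operatorname{Hor}^\mathcal{A}\right)^*$ one must carefully track which pieces survive (horizontal) and which get projected out. The precise way the term $\pounds_{\operatorname{Hor}^\mathcal{A}\mathbf{u}}(\mathbb{J}\circ\beta\cdot\mathcal{A})$ assembles — combining a genuine transport contribution with a curvature contraction — is the delicate point, and I would verify it by computing $\left(\operatorname{Hor}^\mathcal{A}\right)^*\pounds_{\sigma(\omega)}\beta$ explicitly on horizontal test vectors and reconciling $\mathbf{d}\omega$ against $\mathbf{d}^\mathcal{A}\omega+[\mathcal{A},\omega]$-type identities. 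Checking the formulas in the trivial-bundle case ($\mathcal{A}=0$, $\omega=\nu$), where they must collapse to the $\operatorname{ad}^*$ and $\mathbf{d}\nu$ terms of Proposition~\ref{EPaut_trivial}, provides a useful consistency test.
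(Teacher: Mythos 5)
Your overall skeleton (compute on $P$, split via the connection, dualize) is the paper's skeleton, and your plan for the \emph{second} identity essentially reproduces the proof in Appendix \ref{lemma1}: pair $\pounds_U\beta$ against a vertical field $\sigma(\theta)$, let the vertical--vertical bracket produce $[\omega,\theta]$, and use the infinitesimal equivariance of $\theta$ (namely $\mathbf{d}\theta\cdot\sigma(\omega)=-[\omega,\theta]$) to trade $\mathbf{d}^\mathcal{A}\theta\cdot U$ for $\mathbf{d}\theta\cdot\operatorname{Hor}^\mathcal{A}\mathbf{u}-[\omega,\theta]$, whence the terms $\pounds_{\operatorname{Hor}^\mathcal{A}\mathbf{u}}(\mathbb{J}\circ\beta)+\operatorname{ad}^*_\omega(\mathbb{J}\circ\beta)$. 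Two caveats: on a non-trivial bundle you must test against equivariant $\theta\in\mathcal{F}_\mathcal{O}(P,\mathfrak{o})$, not constants (constants are not equivariant, and the transport term only materializes from $\mathbf{d}\theta\neq 0$); and since $\beta$ is a one-form \emph{density}, $\pounds_U$ here is defined by duality, $\langle\pounds_U\beta,V\rangle=\langle\beta,[U,V]_L\rangle$, so the paper bypasses Cartan's formula entirely and instead feeds everything through the single bracket-splitting formula \eqref{brac} (Lemma 5.2 of \cite{GBRa2008a}); if you insist on Cartan you must carry the density factor and the left-bracket sign conventions by hand.

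The genuine gap is in your first identity: you claim $\left(\operatorname{Hor}^\mathcal{A}\right)^*\pounds_{\operatorname{Hor}^\mathcal{A}\mathbf{u}}\beta=\pounds_\mathbf{u}\left(\operatorname{Hor}^\mathcal{A}\right)^*\beta$ by ``naturality'' and expect \emph{both} correction terms to emerge from the vertical piece $\pounds_{\sigma(\omega)}\beta$. This is backwards, and followed literally the bookkeeping does not close. Testing against $\operatorname{Hor}^\mathcal{A}\mathbf{v}$, the horizontal lift fails to preserve brackets:
\[
[\operatorname{Hor}^\mathcal{A}\mathbf{u},\operatorname{Hor}^\mathcal{A}\mathbf{v}]_L=\operatorname{Hor}^\mathcal{A}([\mathbf{u},\mathbf{v}]_L)+\sigma\!\left(\mathcal{A}([\operatorname{Hor}^\mathcal{A}\mathbf{u},\operatorname{Hor}^\mathcal{A}\mathbf{v}]_L)\right),
\]
and the vertical (curvature) component, paired with $\beta$, gives $\left\langle\mathbb{J}\circ\beta,\mathcal{A}([\operatorname{Hor}^\mathcal{A}\mathbf{u},\operatorname{Hor}^\mathcal{A}\mathbf{v}]_L)\right\rangle=\left\langle\left(\operatorname{Hor}^\mathcal{A}\right)^*\pounds_{\operatorname{Hor}^\mathcal{A}\mathbf{u}}(\mathbb{J}\circ\beta\cdot\mathcal{A}),\mathbf{v}\right\rangle$ — so the delicate term you flagged comes precisely from the \emph{horizontal} piece. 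Conversely the vertical piece contributes only the other term: by \eqref{brac}, $[\sigma(\omega),\operatorname{Hor}^\mathcal{A}\mathbf{v}]_L=\sigma\!\left(\mathbf{d}\omega\cdot\operatorname{Hor}^\mathcal{A}\mathbf{v}\right)$, with no curvature contribution, since $\mathcal{B}$ is horizontal and the horizontal part projects to $[0,\mathbf{v}]=0$; this yields exactly $\left(\operatorname{Hor}^\mathcal{A}\right)^*\!\left(\mathbb{J}\circ\beta\cdot\mathbf{d}\omega\right)$ and nothing more. Your proposed curvature source $[\operatorname{Hor}^\mathcal{A}\mathbf{u},\sigma(\omega)]$ is also misidentified: it is vertical and curvature-free (equal to $-\sigma(\mathbf{d}\omega\cdot\operatorname{Hor}^\mathcal{A}\mathbf{u})$ by \eqref{brac}), and it never appears when one dualizes against horizontal test lifts. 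Your own consistency check — evaluating $\left(\operatorname{Hor}^\mathcal{A}\right)^*\pounds_{\sigma(\omega)}\beta$ on horizontal test vectors — would have exposed the discrepancy; the repair is exactly the paper's computation.
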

\medskip
See Appendix \ref{lemma1} for a proof. Applying this Lemma, we obtain the following result.

\begin{proposition}[$\operatorname{EP}\!\mathcal{A}ut$ equations on principal bundles] The Eu\-ler-Poincar\'e equations on the automorphism group of a principal bundle, relative to a Lagrangian $l:\mathfrak{aut}(P)\rightarrow\mathbb{R}$ and a principal connection $\mathcal{A}$, are given by
\begin{equation}\label{EPDiff_nontrivial}
\left\{\begin{array}{l}
\vspace{0.2cm}\displaystyle\frac{\partial}{\partial t}\frac{\delta l^\mathcal{A}}{\delta\mathbf{u}}
+\pounds_\mathbf{u}\frac{\delta l^\mathcal{A}}{\delta\mathbf{u}}+\left(\operatorname{Hor}^\mathcal{A}\right)^*\left(\frac{\delta l^\mathcal{A}}{\delta\omega}\!\cdot\!\mathbf{d}\omega
+\pounds_{\operatorname{Hor}^\mathcal{A}\mathbf{u}}\left(\frac{\delta l^\mathcal{A}}{\delta\omega}\!\cdot\!\mathcal{A}\right)\right)=0\\
\displaystyle\frac{\partial}{\partial t}\frac{\delta l^\mathcal{A}}{\delta\omega}+\pounds_{\operatorname{Hor}^\mathcal{A}\mathbf{u}}\frac{\delta l^\mathcal{A}}{\delta\omega}+\operatorname{ad}^*_\omega\frac{\delta l^\mathcal{A}}{\delta\omega}=0.
\end{array}\right.
\end{equation}
\end{proposition}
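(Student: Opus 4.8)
The plan is to derive \eqref{EPDiff_nontrivial} from the intrinsic equation \eqref{EPAut_general} by pushing it through the dual isomorphism \eqref{isomorphism_A_dual}, with the transport term handled by Lemma \ref{lemma_split}. Write $\beta=\delta l/\delta U\in\mathfrak{aut}(P)^*$. The first task is a chain-rule identification: the two legs of $\beta$ under \eqref{isomorphism_A_dual} are exactly the partial functional derivatives of $l^\mathcal{A}$, namely
\[
\left(\operatorname{Hor}^\mathcal{A}\right)^*\frac{\delta l}{\delta U}=\frac{\delta l^\mathcal{A}}{\delta\mathbf{u}},\qquad \mathbb{J}\circ\frac{\delta l}{\delta U}=\frac{\delta l^\mathcal{A}}{\delta\omega}.
\]
To establish this I would differentiate $l(U)=l^\mathcal{A}([U],\mathcal{A}(U))$ and use the inverse of \eqref{isomorphism_A} to decompose an arbitrary variation as $\delta U=\operatorname{Hor}^\mathcal{A}(\delta\mathbf{u})+\sigma(\delta\omega)$. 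Pairing $\beta$ against the horizontal piece yields $\langle(\operatorname{Hor}^\mathcal{A})^*\beta,\delta\mathbf{u}\rangle$, while pairing against the vertical piece $\sigma(\delta\omega)(p)=(\delta\omega(p))_P(p)$ reproduces $\langle\mathbb{J}\circ\beta,\delta\omega\rangle$ by the very definition of the cotangent momentum map $\mathbb{J}$. Matching the independent variations $\delta\mathbf{u}$ and $\delta\omega$ gives the two identities.

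With these identities in place, I would apply each component of \eqref{isomorphism_A_dual} to the whole of \eqref{EPAut_general}. Because the connection $\mathcal{A}$ is held fixed, the isomorphism \eqref{isomorphism_A_dual} is time-independent and hence commutes with $\partial_t$; the time-derivative term therefore splits cleanly into $\partial_t(\delta l^\mathcal{A}/\delta\mathbf{u})$ and $\partial_t(\delta l^\mathcal{A}/\delta\omega)$. It remains only to split the transport term $\pounds_U\beta$, which is precisely the content of Lemma \ref{lemma_split}. Substituting $(\operatorname{Hor}^\mathcal{A})^*\beta=\delta l^\mathcal{A}/\delta\mathbf{u}$ and $\mathbb{J}\circ\beta=\delta l^\mathcal{A}/\delta\omega$ into the first identity of the Lemma reproduces the first line of \eqref{EPDiff_nontrivial}, and doing the same in the second identity reproduces the second line.

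Since all the analytic work has been isolated in Lemma \ref{lemma_split}, the remainder of the argument is essentially bookkeeping, and I expect no serious obstacle. The only point that genuinely requires care is the chain-rule identification of the first paragraph: one must verify that \eqref{isomorphism_A_dual} is the inverse transpose of \eqref{isomorphism_A}, so that the partial derivatives of $l^\mathcal{A}$ are exactly the two legs of $\delta l/\delta U$ with no stray connection-dependent correction. Once this is confirmed, both equations of \eqref{EPDiff_nontrivial} follow by direct substitution of the Lemma.
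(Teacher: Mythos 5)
Your proposal is correct and follows the same route as the paper: the paper likewise starts from the intrinsic equation \eqref{EPAut_general}, notes that the fixed-connection dual isomorphism \eqref{isomorphism_A_dual} sends $\delta l/\delta U$ to the pair $\left(\delta l^\mathcal{A}/\delta\mathbf{u},\,\delta l^\mathcal{A}/\delta\omega\right)$ so that the time-derivative term splits immediately, and then invokes Lemma \ref{lemma_split} to split $\pounds_U\,\delta l/\delta U$ into the two stated lines. Your explicit verification of the chain-rule identification, via the decomposition $\delta U=\operatorname{Hor}^\mathcal{A}(\delta\mathbf{u})+\sigma(\delta\omega)$ and the definition of $\mathbb{J}$, is a detail the paper leaves implicit but is exactly the right justification.
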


\subsection{Formulation in terms of the adjoint bundle} We now
rewrite the EP$\mathcal{A}ut$-equations \eqref{EPDiff_nontrivial} by identifying
$\mathcal{F}_\mathcal{O}(P,\mathfrak{o})$ with the space
$\Gamma(\operatorname{Ad}P)$ of sections of the adjoint bundle
\[
\operatorname{Ad}P:=(P\times\mathfrak{o})/\mathcal{O}\rightarrow M.
\]
Recall that the quotient is taken relative to the diagonal action of $\mathcal{O}$ on $P\times\mathfrak{o}$ given by $(p,\xi)\mapsto (\Phi_g(p),\operatorname{Ad}_{g^{-1}}\xi)$. We will denote by $[p,\xi]_\mathcal{O}$ an element of the adjoint bundle.
There is a Lie algebra isomorphism
\begin{equation}\label{tilde_isomorphism}
\omega\in\mathcal{F}_\mathcal{O}(P,\mathfrak{o})\mapsto \tilde{\omega}\in\Gamma(\operatorname{Ad}P)
\end{equation}
defined by
\[
\tilde{\omega}(x):=[p,\omega(p)]_\mathcal{O},
\]
where $p\in P$ is such that $\pi(p)=x$. This isomorphism extends to $k$-forms as follows. Let
\[
\overline{\Omega^k}(P,\mathfrak{o}):=\left\{\alpha\in\Omega^k(P,\mathfrak{o})\mid\Phi_g^*\alpha=\operatorname{Ad}_{g^{-1}}\alpha\quad\text{and}\quad \mathbf{i}_{\xi_P}\alpha=0,\quad\text{for all $g\in G$ and $\xi\in\mathfrak{o}$}\right\}
\]
be the space of  $\mathcal{O}$--equivariant horizontal $k$--forms on $P$, and let $\Omega^k(M,\operatorname{Ad}P)$ be the space of $\operatorname{Ad}P$-valued $k$-forms on $M$. Then we have the isomorphism
\[
\alpha\in \overline{\Omega^k}(P,\mathfrak{o})\mapsto \tilde{\alpha}\in \Omega^k(M,\operatorname{Ad}P),\quad\tilde{\alpha}(x)(u^1_x,...,u^k_x):=\left[p,\alpha(p)\left(U^1_p,...,U^k_p\right)\right]_\mathcal{O},
\]
where $U^i_p\in T_pP$ is such that $T_p\pi(U^i_p)=u^i_x$, $i=1,...,k$.
An example of $\mathcal{O}$--equivariant horizontal $2$-form is provided by the curvature $\mathcal{B}=\mathbf{d}\mathcal{A}+[\mathcal{A},\mathcal{A}]$ to which is associated the reduced curvature form $\tilde{\mathcal{B}}\in\Omega^2(M,\operatorname{Ad}P)$.

Given a principal connection $\mathcal{A}$ on $P$, we get a linear connection $\nabla^\mathcal{A}$ on the adjoint bundle. For $\omega\in\mathcal{F}_\mathcal{O}(P,\mathfrak{o})$, the relation between the covariant exterior derivative $\mathbf{d}^\mathcal{A}$ and the connection $\nabla^\mathcal{A}$ is
\[
\widetilde{\mathbf{d}^\mathcal{A}\omega}=\nabla^\mathcal{A}\tilde\omega.
\]
This formula extends the operator $\nabla^\mathcal{A}$ to $\Omega^k(M,\operatorname{Ad}P)$. We shall use the covariant Lie derivative defined by
\[
\pounds^\mathcal{A}_\mathbf{u}=\mathbf{i}_\mathbf{u}\nabla^\mathcal{A}+\nabla^\mathcal{A}\mathbf{i}_\mathbf{u}
\]
on $\Ad(P)$-valued differential forms, and related to the ordinary Lie derivative by the formula
\[
\widetilde{\pounds_{\operatorname{Hor}^\mathcal{A}\mathbf{u}}\alpha}=\pounds_\mathbf{u}^\mathcal{A}\tilde{\alpha}.
\]

By combining the isomorphism \eqref{isomorphism_A} with \eqref{tilde_isomorphism}, we identify $\mathfrak{aut}(P)$ with $\mathfrak{X}(M)\times\Gamma(\operatorname{Ad}P)$.
In this case, we have the following analogue of Lemma \ref{lemma_split}.
\begin{lemma}\label{lemma_split_adjoint} 
Let $\beta \in\mathfrak{aut}(P)^*$ and $U\in\mathfrak{aut}(P)$, and fix a principal connection $\mathcal{A}$ on $P$. Then we have
for $\mathbf{u}=[U]$ and $\tilde{\om}=\widetilde{\mathcal{A}(U)}$:
\begin{align*}
\left(\operatorname{Hor}^\mathcal{A}\right)^*\pounds_U\beta&=
\pounds_\mathbf{u}\left(\operatorname{Hor}^\mathcal{A}\right)^*\beta +\widetilde{\mathbb{J}\circ \beta }\!\cdot\!\left(\nabla^\mathcal{A}\tilde{\omega}+\mathbf{i}_\mathbf{u}\tilde{\mathcal{B}}\right)\\
\widetilde{\mathbb{J}\circ\pounds_U \beta }&=\pounds^\mathcal{A}_\mathbf{u}\widetilde{\mathbb{J}\circ \beta }+\operatorname{ad}^*_{\tilde\omega}\widetilde{\mathbb{J}\circ \beta },
\end{align*}
where $\pounds^\mathcal{A}$ is the covariant Lie derivative acting on $\operatorname{Ad}^*P$-valued densities, that is, for $\mu=\sigma \otimes\alpha$, where $\alpha$ is a density and $\sigma$ a section of $\operatorname{Ad}^*P\rightarrow M$, we have
\[
\pounds^\mathcal{A}_\mathbf{v}\mu=\pounds^\mathcal{A}_\mathbf{v}(\sigma \otimes\alpha)=\nabla^\mathcal{A}_\mathbf{v}\sigma\otimes \alpha+\sigma\operatorname{div}(\mathbf{v})\otimes\alpha.
\]
\end{lemma}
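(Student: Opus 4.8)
The plan is to translate the two formulas of Lemma \ref{lemma_split} from the equivariant-function picture to the adjoint-bundle picture, using the three dictionary identities already recorded in this subsection. Specifically, I will use the isomorphism $\omega\mapsto\tilde\omega$ together with the relations
\[
\widetilde{\mathbf{d}^\mathcal{A}\omega}=\nabla^\mathcal{A}\tilde\omega,\qquad \widetilde{\pounds_{\operatorname{Hor}^\mathcal{A}\mathbf{u}}\alpha}=\pounds^\mathcal{A}_\mathbf{u}\tilde\alpha,
\]
and the identification of $\mathcal{B}=\mathbf{d}^\mathcal{A}\mathcal{A}$ with $\tilde{\mathcal{B}}\in\Omega^2(M,\operatorname{Ad}P)$. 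Applying the tilde isomorphism to the second equation of Lemma \ref{lemma_split} is essentially immediate: the dictionary sends $\mathbb{J}\circ\beta$ to $\widetilde{\mathbb{J}\circ\beta}$, sends $\pounds_{\operatorname{Hor}^\mathcal{A}\mathbf{u}}(\mathbb{J}\circ\beta)$ to $\pounds^\mathcal{A}_\mathbf{u}\widetilde{\mathbb{J}\circ\beta}$, and intertwines the pointwise $\operatorname{ad}^*_\omega$ action on $\mathfrak{o}^*$ with $\operatorname{ad}^*_{\tilde\omega}$ on sections of $\operatorname{Ad}^*P$. So the second identity of Lemma \ref{lemma_split_adjoint} follows directly, and I would state it as a one-line consequence.

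The substantive content is in the first identity, where I must show that the awkward term
\[
\mathbb{J}\circ\beta\!\cdot\!\mathbf{d}\omega+\pounds_{\operatorname{Hor}^\mathcal{A}\mathbf{u}}\big(\mathbb{J}\circ\beta\!\cdot\!\mathcal{A}\big)
\]
reduces, after applying $(\operatorname{Hor}^\mathcal{A})^*$ and passing through the isomorphism, to the clean expression $\widetilde{\mathbb{J}\circ\beta}\cdot\big(\nabla^\mathcal{A}\tilde\omega+\mathbf{i}_\mathbf{u}\tilde{\mathcal{B}}\big)$. First I would observe that pulling back by $\operatorname{Hor}^\mathcal{A}$ kills any term carrying the connection form $\mathcal{A}$ in a vertical slot, since $\operatorname{Hor}^\mathcal{A}$ lands in the horizontal distribution where $\mathcal{A}$ vanishes; this is precisely why the $\mathbf{d}^\mathcal{A}$ (horizontal exterior derivative) appears in place of $\mathbf{d}$. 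The key computation is therefore to expand $\pounds_{\operatorname{Hor}^\mathcal{A}\mathbf{u}}(\mathbb{J}\circ\beta\cdot\mathcal{A})$ via Cartan's formula $\pounds_X=\mathbf{i}_X\mathbf{d}+\mathbf{d}\,\mathbf{i}_X$ and to combine it with $\mathbb{J}\circ\beta\cdot\mathbf{d}\omega$. After applying $(\operatorname{Hor}^\mathcal{A})^*$, the $\mathbf{d}\omega$ contribution together with the surviving piece of the Lie-derivative term assembles into $\widetilde{\mathbb{J}\circ\beta}\cdot\nabla^\mathcal{A}\tilde\omega$ (using $\widetilde{\mathbf{d}^\mathcal{A}\omega}=\nabla^\mathcal{A}\tilde\omega$), while the term in which $\mathbf{i}_{\operatorname{Hor}^\mathcal{A}\mathbf{u}}$ hits $\mathbf{d}\mathcal{A}$ produces the curvature contribution; here I must use $\mathcal{B}=\mathbf{d}\mathcal{A}+[\mathcal{A},\mathcal{A}]$ and note that the $[\mathcal{A},\mathcal{A}]$ correction again drops out under the horizontal pullback, so that $\mathbf{i}_{\operatorname{Hor}^\mathcal{A}\mathbf{u}}\mathbf{d}\mathcal{A}$ effectively becomes $\mathbf{i}_\mathbf{u}\tilde{\mathcal{B}}$.

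The hard part will be bookkeeping the $\operatorname{Ad}$-action correctly when commuting $\mathbb{J}\circ\beta$ (an $\mathfrak{o}^*$-valued object) past the exterior derivative: the pairing $\mathbb{J}\circ\beta\cdot\mathcal{A}$ is scalar-valued, but differentiating it produces both a $\mathbf{d}(\mathbb{J}\circ\beta)$ term and a term where the derivative lands on $\mathcal{A}$, and I must verify that these recombine so that the equivariant derivative of $\mathbb{J}\circ\beta$ is exactly what the covariant pairing on the base demands. In other words, the delicate point is confirming that the same connection $\mathcal{A}$ that defines $\nabla^\mathcal{A}$ on $\operatorname{Ad}P$ also governs the covariant derivative implicit in $\widetilde{\mathbb{J}\circ\beta}\cdot\nabla^\mathcal{A}\tilde\omega$, so that no spurious $\operatorname{ad}^*$ terms are left over. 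I expect this to work out precisely because $\mathbb{J}\circ\beta$ is $\mathcal{O}$-equivariant and the $\operatorname{ad}^*$ contributions from its derivative cancel against those coming from $\mathbf{d}\omega$, mirroring the cancellation that makes the second identity clean; once that cancellation is checked the result follows. Given the parallel structure with Lemma \ref{lemma_split}, I would present this as a corollary-style computation and defer the fully explicit manipulation to the appendix, as is done for the preceding lemma.
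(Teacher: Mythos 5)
Your proof is correct; the second identity is handled exactly as in the paper (pair with an arbitrary $\theta\in\mathcal{F}_\mathcal{O}(P,\mathfrak{o})$, quote Lemma \ref{lemma_split}, and apply the dictionary $\widetilde{\pounds_{\operatorname{Hor}^\mathcal{A}\mathbf{u}}\alpha}=\pounds^{\mathcal{A}}_{\mathbf{u}}\tilde{\alpha}$ together with the $\operatorname{ad}^*$ correspondence), but for the first identity you take a genuinely different route. The paper never manipulates the awkward term $\mathbb{J}\circ\beta\cdot\mathbf{d}\omega+\pounds_{\operatorname{Hor}^\mathcal{A}\mathbf{u}}(\mathbb{J}\circ\beta\cdot\mathcal{A})$ at all: it re-runs the proof of Lemma \ref{lemma_split} in weak form, pairing with an arbitrary $\mathbf{v}\in\mathfrak{X}(M)$ and using the bracket decomposition \eqref{brac} with $V=\operatorname{Hor}^\mathcal{A}\mathbf{v}$, so that
\[
\left\langle(\operatorname{Hor}^\mathcal{A})^*\pounds_U\beta,\mathbf{v}\right\rangle=\left\langle\beta,\operatorname{Hor}^\mathcal{A}([\mathbf{u},\mathbf{v}]_L)\right\rangle+\left\langle\mathbb{J}\circ\beta,\mathbf{d}\omega\!\cdot\!\operatorname{Hor}^\mathcal{A}\mathbf{v}+\mathcal{B}(\operatorname{Hor}^\mathcal{A}\mathbf{u},\operatorname{Hor}^\mathcal{A}\mathbf{v})\right\rangle,
\]
and the terms $\nabla^\mathcal{A}\tilde{\omega}$ and $\mathbf{i}_\mathbf{u}\tilde{\mathcal{B}}$ are then read off directly from the dictionary. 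You instead deduce the adjoint-bundle form from the \emph{statement} of Lemma \ref{lemma_split} by a Cartan-formula expansion, and this does go through: $\mathbf{d}\,\mathbf{i}_{\operatorname{Hor}^\mathcal{A}\mathbf{u}}(\mathbb{J}\circ\beta\cdot\mathcal{A})=0$ because $\mathcal{A}(\operatorname{Hor}^\mathcal{A}\mathbf{u})=0$, the pullback $(\operatorname{Hor}^\mathcal{A})^*$ kills every term carrying $\mathcal{A}$ in a free slot, and $[\mathcal{A},\mathcal{A}]$ vanishes on horizontal pairs, so $\mathbf{i}_{\operatorname{Hor}^\mathcal{A}\mathbf{u}}\mathbf{d}\mathcal{A}$ contributes exactly $\mathbf{i}_\mathbf{u}\tilde{\mathcal{B}}$, while $(\operatorname{Hor}^\mathcal{A})^*(\mathbb{J}\circ\beta\cdot\mathbf{d}\omega)$ alone already yields $\widetilde{\mathbb{J}\circ\beta}\cdot\nabla^\mathcal{A}\tilde{\omega}$, since $\mathbf{d}\omega$ and $\mathbf{d}^\mathcal{A}\omega$ agree on horizontal arguments. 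One small correction to your anticipated delicate point: no cancellation of $\operatorname{ad}^*$ contributions between $\mathbf{d}(\mathbb{J}\circ\beta)$ and $\mathbf{d}\omega$ is needed, because the only term containing a derivative of $\mathbb{J}\circ\beta$, namely $\left\langle\mathbf{d}(\mathbb{J}\circ\beta)(\operatorname{Hor}^\mathcal{A}\mathbf{u}),\mathcal{A}(\cdot)\right\rangle$, is annihilated outright by the horizontal pullback; the equivariance of $\mathbb{J}\circ\beta$ enters only to guarantee that all surviving expressions descend to $M$. What your route buys is that the adjoint-bundle lemma becomes a formal corollary of the already-stated Lemma \ref{lemma_split}, with no further appeal to the bracket formula of \cite{GBRa2008a}; what the paper's route buys is brevity, since the covariant derivative and the curvature emerge in a single step from the bracket decomposition instead of after an exterior-calculus expansion.
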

This is used below to obtain the expression of the EP$\mathcal{A}ut$ equation in terms of the variables $\mathbf{u}$ and $\tilde{\omega}$.

\begin{theorem}[Alternative formulation of the $\operatorname{EP}\mathcal{A}ut$ equations]\label{altern_form} The Euler-Poincar\'e equations \eqref{EPDiff_nontrivial} on the automorphism group of a principal bundle can be alternatively written as
\begin{equation}\label{EPAut_Adjoint}
\left\{\begin{array}{l}
\vspace{0.2cm}\displaystyle\frac{\partial}{\partial t}\frac{\delta l^\mathcal{A}}{\delta\mathbf{u}}
+\pounds_\mathbf{u}\frac{\delta l^\mathcal{A}}{\delta\mathbf{u}}+\frac{\delta l^\mathcal{A}}{\delta\tilde\omega}\!\cdot\!\left(\nabla^\mathcal{A}\tilde{\omega}+\mathbf{i}_\mathbf{u}\tilde{\mathcal{B}}\right)=0\\
\displaystyle\frac{\partial}{\partial t}\frac{\delta l^\mathcal{A}}{\delta\tilde\omega}+\pounds_{\mathbf{u}}^\mathcal{A}\frac{\delta l^\mathcal{A}}{\delta\tilde{\omega}}+\operatorname{ad}^*_{\tilde\omega}\frac{\delta l^\mathcal{A}}{\delta\tilde{\omega}}=0.
\end{array}\right.
\end{equation}
\end{theorem}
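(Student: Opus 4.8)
The strategy is to recognize that \eqref{EPAut_Adjoint} is nothing but the intrinsic equation \eqref{EPAut_general} read off in the adjoint-bundle variables $(\mathbf{u},\tilde\omega)$, exactly as \eqref{EPDiff_nontrivial} was that same equation read off in the variables $(\mathbf{u},\omega)$. Since the preceding proposition already establishes that \eqref{EPDiff_nontrivial} is equivalent to the intrinsic form \eqref{EPAut_general}, it suffices to transport \eqref{EPAut_general} through the dual isomorphism \eqref{isomorphism_A_dual} and the adjoint-bundle isomorphism \eqref{tilde_isomorphism}, the only change relative to the derivation of \eqref{EPDiff_nontrivial} being that Lemma \ref{lemma_split_adjoint} is used in place of Lemma \ref{lemma_split}.

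First I would record how the functional derivatives transform. Writing $l=l^\mathcal{A}\circ T$ with $T\colon U\mapsto([U],\mathcal{A}(U))$ the isomorphism \eqref{isomorphism_A}, the chain rule gives $\delta l/\delta U=T^*\big(\delta l^\mathcal{A}/\delta(\mathbf{u},\omega)\big)$; since \eqref{isomorphism_A_dual} is precisely the inverse dual $(T^*)^{-1}$, applying it to $\beta=\delta l/\delta U$ yields
\[
\frac{\delta l^\mathcal{A}}{\delta\mathbf{u}}=\left(\operatorname{Hor}^\mathcal{A}\right)^*\frac{\delta l}{\delta U},\qquad \frac{\delta l^\mathcal{A}}{\delta\omega}=\mathbb{J}\circ\frac{\delta l}{\delta U},\qquad \frac{\delta l^\mathcal{A}}{\delta\tilde\omega}=\widetilde{\mathbb{J}\circ\frac{\delta l}{\delta U}},
\]
the last identity coming from passing $\delta l^\mathcal{A}/\delta\omega$ to the adjoint bundle via \eqref{tilde_isomorphism}. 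Because the principal connection $\mathcal{A}$ is fixed in time, all these maps are $t$-independent, so $\partial_t$ commutes with both $(\operatorname{Hor}^\mathcal{A})^*$ and $\widetilde{\mathbb{J}\circ(\cdot)}$.

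Next I would apply these two component operators to \eqref{EPAut_general} evaluated at $\beta=\delta l/\delta U$; since their direct sum is an isomorphism onto $\mathfrak{X}(M)^*\times(\Gamma(\operatorname{Ad}^*P)\otimes\operatorname{Den}(M))$, the resulting pair of scalar equations is equivalent to the single intrinsic equation. The time-derivative term produces $\partial_t(\delta l^\mathcal{A}/\delta\mathbf{u})$ and $\partial_t(\delta l^\mathcal{A}/\delta\tilde\omega)$ by the commutation just noted. For the transport term $\pounds_U(\delta l/\delta U)$ I invoke Lemma \ref{lemma_split_adjoint} with $\mathbf{u}=[U]$ and $\tilde\omega=\widetilde{\mathcal{A}(U)}$: its first identity turns $(\operatorname{Hor}^\mathcal{A})^*\pounds_U\beta$ into $\pounds_\mathbf{u}(\delta l^\mathcal{A}/\delta\mathbf{u})+\frac{\delta l^\mathcal{A}}{\delta\tilde\omega}\cdot(\nabla^\mathcal{A}\tilde\omega+\mathbf{i}_\mathbf{u}\tilde{\mathcal{B}})$, while its second identity turns $\widetilde{\mathbb{J}\circ\pounds_U\beta}$ into $\pounds^\mathcal{A}_\mathbf{u}(\delta l^\mathcal{A}/\delta\tilde\omega)+\operatorname{ad}^*_{\tilde\omega}(\delta l^\mathcal{A}/\delta\tilde\omega)$. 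Collecting the two components gives exactly the coupled system \eqref{EPAut_Adjoint}.

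Since Lemma \ref{lemma_split_adjoint} does all the differential-geometric work, the theorem itself is essentially bookkeeping and I expect no genuine obstacle at this level. The one point deserving care, which is the real content hidden behind the short argument, is the consistency of Lemma \ref{lemma_split_adjoint} with Lemma \ref{lemma_split}: one must check that $\widetilde{\mathbb{J}\circ\beta}\cdot(\nabla^\mathcal{A}\tilde\omega+\mathbf{i}_\mathbf{u}\tilde{\mathcal{B}})$ is the adjoint-bundle image of $(\operatorname{Hor}^\mathcal{A})^*(\mathbb{J}\circ\beta\cdot\mathbf{d}\omega+\pounds_{\operatorname{Hor}^\mathcal{A}\mathbf{u}}(\mathbb{J}\circ\beta\cdot\mathcal{A}))$, the curvature term $\mathbf{i}_\mathbf{u}\tilde{\mathcal{B}}$ emerging from $\mathbf{d}^\mathcal{A}\mathcal{A}=\mathcal{B}$ together with the relation $\widetilde{\mathbf{d}^\mathcal{A}\omega}=\nabla^\mathcal{A}\tilde\omega$ and $\widetilde{\pounds_{\operatorname{Hor}^\mathcal{A}\mathbf{u}}\alpha}=\pounds^\mathcal{A}_\mathbf{u}\tilde\alpha$. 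This is exactly the assertion of Lemma \ref{lemma_split_adjoint}, proven alongside Lemma \ref{lemma_split} in the appendix, so for the present theorem it may be taken as given.
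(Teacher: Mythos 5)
Your proposal is correct and follows exactly the route the paper intends: the theorem is obtained by splitting the intrinsic equation \eqref{EPAut_general} through the connection-dependent isomorphisms \eqref{isomorphism_A_dual} and \eqref{tilde_isomorphism}, with Lemma \ref{lemma_split_adjoint} replacing Lemma \ref{lemma_split} in the derivation of \eqref{EPDiff_nontrivial}, which is why the paper states the result without further argument. The only minor slip is bibliographic: the appendix proves only Lemma \ref{lemma_split}, and the analogous computation for Lemma \ref{lemma_split_adjoint} is left implicit, so your consistency check (curvature term from $\mathcal{B}=\mathbf{d}^\mathcal{A}\mathcal{A}$ together with $\widetilde{\mathbf{d}^\mathcal{A}\omega}=\nabla^\mathcal{A}\tilde\omega$ and $\widetilde{\pounds_{\operatorname{Hor}^\mathcal{A}\mathbf{u}}\alpha}=\pounds^\mathcal{A}_\mathbf{u}\tilde\alpha$) is precisely what must be supplied rather than cited.
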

As in the trivial bundle case, it is convenient to fix a Riemannian metric $g$ on $M$ in order to write the equations more explicitly. Defining $\mathbf{v}$ and ${\zeta}$ by the equalities
\begin{align*}
\frac{\delta l^\mathcal{A}}{\delta\mathbf{u}}&=:\mathbf{v}^\flat\otimes\mu_M\in \Om^1(M)\otimes\Den(M)\\
\frac{\delta l^\mathcal{A}}{\delta\tilde\om}&=:{\zeta}\otimes\mu_M\in
\Ga(\Ad^*P)\otimes\Den(M)
\end{align*}
and using the formula
\[
\pounds^\mathcal{A}_{\mathbf{u}}({\zeta}\otimes\mu_M)
=\nabla^\mathcal{A}_{\mathbf{u}}{\zeta}\otimes\mu_M
+{\zeta}(\div\mathbf{u})\otimes\mu_M,
\]
the EP$\mathcal{A}ut$ equations
\eqref{EPAut_Adjoint} can be rewritten as
\[
\left\{\begin{array}{l}
\vspace{0.2cm}\displaystyle\partial_t\mathbf{v}+\nabla_\mathbf{u}\mathbf{v}+\nabla\mathbf{u}^\mathsf{T}\cdot\mathbf{v}+\mathbf{v}\div\mathbf{u}
+\left({\zeta}\!\cdot\!\left(\nabla^\mathcal{A}\tilde{\omega}+\mathbf{i}_\mathbf{u}\tilde{\mathcal{B}}\right)\right)^\sharp=0\\
\displaystyle\partial_t{\zeta}+\nabla_\mathbf{u}^\mathcal{A}{\zeta}+{\zeta}\operatorname{div}\mathbf{u}+\operatorname{ad}^*_{\tilde{\omega}}{\zeta}=0,
\end{array}\right.
\]
which reduce to an Euler-Poincar\'e equation on the diffeomorphism group (\cite{HoMaRa98}) in the case when $\mathcal{A}=0$ and $\zeta=0$.

\begin{remark}[Trivial bundles and curvature representation]\normalfont
When the principal bundle is trivial, we recover the equation
\eqref{EPAut_trivial}. Indeed, we can identify
$\omega\in\mathcal{F}_\mathcal{O}(P,\mathfrak{o})$ with
${\nu}+A\!\cdot\!\mathbf{u}\in\mathcal{F}(M,\mathfrak{o})$.
Here $A$ denotes the
one-form on $M$ induced by the connection $\mathcal{A}$ on $P$:  we have
$\mathcal{A}(v_x,\xi_g)=\operatorname{Ad}_{g^{-1}}\left(A(v_x)+\xi_gg^{-1}\right)$ and also
\[
l^\mathcal{A}(\mathbf{u},{\nu}+A\!\cdot\!\mathbf{u})=l(\mathbf{u},{\nu}).
\]
Thus, $\delta l^\mathcal{A}/\delta\omega$ can be identified with
$\delta l/\delta{\nu}$ and $\delta l^\mathcal{A}/\delta
\mathbf{u}$ is identified with $\delta l/\delta\mathbf{u}-\delta
l/\delta{\nu}\!\cdot\!A$. Similarly, the Lie derivative
$\pounds_{\operatorname{Hor}^\mathcal{A}\mathbf{u}}\delta
l^\mathcal{A}/\delta\omega$ can be identified with
$\pounds_\mathbf{u}\delta
l/\delta{\omega}-\operatorname{ad}^*_{A\!\cdot\!\mathbf{u}}\delta
l/\delta{\nu}$.
We shall denote by $B=\mathbf{d}A+[A,A]$ the two-form on $M$ induced by the curvature $ \mathcal{B}$. Then,
the EP$\mathcal{A}ut$ equations \eqref{EPAut_trivial} written in terms of the Lagrangian $l^\mathcal{A}=l^\mathcal{A}(\mathbf{u},{\omega})$ read
\begin{equation}\label{EPAut_trivial_curvature}
\left\{\begin{array}{l}
\vspace{0.2cm}\displaystyle\frac{\partial}{\partial t}\frac{\delta l^\mathcal{A}}{\delta\mathbf{u}}+\pounds_{\mathbf{u}}\frac{\delta l^\mathcal{A}}{\delta\mathbf{u}}+\frac{\delta l^\mathcal{A}}{\delta{\omega}}\!\cdot\!\left(\mathbf{d}{\omega}+[A,{\omega}]+\mathbf{i}_\mathbf{u}B\right)=0\\
\displaystyle\frac{\partial}{\partial t}\frac{\delta l^\mathcal{A}}{\delta{\omega}}+\pounds_{\mathbf{u}}\frac{\delta l^\mathcal{A}}{\delta{\omega}}+\operatorname{ad}^*_{{\omega}-A\!\cdot\!\mathbf{u}}\frac{\delta l^\mathcal{A}}{\delta{\omega}}=0,
\end{array}\right.
\end{equation}
where the curvature two form $B$ on $M$ satisfies the relation
${\bf i}_{\mathbf{u}}B
=[A\cdot \mathbf{u},A]-{\bf d}(A\cdot \mathbf{u})+\pounds_{\mathbf{u}}A$.
\end{remark}

\subsection{Kaluza-Klein Lagrangians} Let $g$ be a Riemannian
metric on $M$ and $\ga$ an $\operatorname{Ad}$-invariant scalar
product on $\mathfrak{o}$. These data, together with the principal
connection $\mathcal{A}$, define a \textit{Kaluza-Klein metric} on
the principal $\mathcal{O}$-bundle $\pi:P\rightarrow M$:
\begin{equation}\label{kakl}
\kappa_p(U_p,V_p)=g_{\pi(p)}
\left(T_p\pi(U_p),T_p\pi(V_p)\right)+\ga(\mathcal{A}_p(U_p),\mathcal{A}_p(V_p)),\quad
U_p,V_p\in T_pP.
\end{equation}
The associated $L^2$ Kaluza-Klein Lagrangian is obtained by integration over $M$ and reads
\[
l(U)=\int_Mg([U],[U])\mu_M+\int_M\gamma(\mathcal{A}(U),\mathcal{A}(U))\mu_M.
\]
Note that the last term is well defined, since the integrand is an $\mathcal{O}$--invariant function on $P$. More generally, we can consider Kaluza-Klein Lagrangians of the form
\begin{equation}\label{KK_Lagrangian}
l(U)=\frac{1}{2}\|[U]\|^2_1+\frac{1}{2}\|\mathcal{A}(U)\|_2^2,
\end{equation}
relative to inner product norms $\|\,\|_1$ and $\|\,\|_2$ on
$\mathfrak{X}(M)$ and $\mathcal{F}_{\mathcal{O}}(P,\mathfrak{o})$.
As before, the norm $\|\,\|_2$ is usually given with the help of and
$\operatorname{Ad}$-invariant inner product $\gamma$ on $\mathfrak{o}$, that
makes possible to integrate the expression over the base manifold
$M$. See \cite{GBRa2008a} for an application of the $L^2$ Kaluza-Klein Lagrangian to Yang-Mills fluids.
In the trivial case, \eqref{KK_Lagrangian}
reduces to
\eqref{KK_Lagrangian_trivial} and this emphasizes the role of the vector potential $A$ in CH2 and MCH2 dynamics.

We now compute explicitly the EP$\mathcal{A}ut$ equations for the Kaluza-Klein Lagrangian \eqref{KK_Lagrangian}. We assume that the norms are associated to symmetric and positive definite differential operators $Q_1$ and $Q_2$ on $TM$ and $\operatorname{Ad}P$, respectively.
In this case, the Lagrangian $l^\mathcal{A}:\mathfrak{X}(M)\times\Gamma(\operatorname{Ad}P)\rightarrow\mathbb{R}$ reads
\[
l^\mathcal{A}(\mathbf{u},\tilde\omega)=\int_Mg(Q_1\mathbf{u},\mathbf{u})\mu_M+\int_M\bar\gamma(Q_2\tilde\omega,\tilde\omega)\mu_M,
\]
where $\bar\gamma$ is the vector bundle metric induced on
$\operatorname{Ad}P$ by $\gamma$. We thus obtain
the EP$\mathcal{A}ut$ equations
\begin{equation}\label{kkl}
\left\{\begin{array}{l}
\vspace{0.2cm}\displaystyle\partial_t\mathbf{v}+\nabla_\mathbf{u}\mathbf{v}+\nabla\mathbf{u}^\mathsf{T}\cdot\mathbf{v}+\mathbf{v}\div\mathbf{u}
+{\zeta}\!\cdot\!\left(\nabla^\mathcal{A}\tilde{\omega}+\mathbf{i}_\mathbf{u}\tilde{\mathcal{B}}\right)^\sharp=0\\
\displaystyle\partial_t{\zeta}+\nabla_\mathbf{u}^\mathcal{A}{\zeta}+{\zeta}\operatorname{div}\mathbf{u}+\operatorname{ad}^*_{\tilde{\omega}}{\zeta}=0,
\end{array}\right.
\end{equation}
where
\[
\mathbf{v}=Q_1\mathbf{u}\in\mathfrak{X}(M)\quad\text{and}\quad\zeta=\bar\gamma(Q_2\tilde\omega,\cdot)\in\Gamma(\operatorname{Ad}^*P),
\]
which evidently extend MCH2 dynamics to the case of a Yang-Mills
fluid flow on a non-trivial principal $\mathcal{O}$-bundle with
fixed connection $\mathcal{A}$.

\subsection{Kelvin-Noether circulation theorem}

Let $U$ be a
solution of the EP$\mathcal{A}ut$ equation \eqref{EPAut_general} and
let $\rho$ be a density variable on $M$ satisfying the equation
$\partial_t\rho+\pounds_{\mathbf{u}}\rho=0$, where
$\mathbf{u}:=[U]$. Then we have
\[
\frac{d}{dt}\oint_{\gamma_t}\frac{1}{\rho}\frac{\delta l}{\delta U}=0,
\]
where $\gamma_t$ is a loop in the total space $P$ of the principal bundle which moves with the velocity $U$, see \S7 in \cite{GBRa2008a}. Given a connection $\mathcal{A}$ on $P$, we can decompose the functional derivative as
\[
\frac{\delta l}{\delta U}=\pi^*\frac{\delta l^\mathcal{A}}{\delta \mathbf{u}}+\frac{\delta l^\mathcal{A}}{\delta \omega}\!\cdot\!\mathcal{A},
\]
and the circulation reads
\begin{equation}\label{KN_nontrivial}
\frac{d}{dt}\left[\oint_{c_t}\frac{1}{\rho}\frac{\delta l^\mathcal{A}}{\delta\mathbf{u}}+\oint_{\gamma_t}\frac{1}{\rho}\frac{\delta l^\mathcal{A}}{\delta \omega}\!\cdot\!\mathcal{A}\right]=0,
\end{equation}
where $c_t:=\pi\circ\gamma_t$ is the closed curve in $M$ induced by $\gamma_t$. Note that the second integral takes into account the internal structure of the bundle, since $\gamma_t$ is a curve in $P$.

In the trivial bundle case the time dependent curve reads
$\gamma_t=(c_t,g_t)=(\eta_t\circ c_0,(\chi_t\circ c_0)g_0)$, and a
direct computation shows that formula \eqref{KN_nontrivial} yields
\begin{equation}\label{KN3} 
\frac{d}{dt} \left[ \oint_{ c _t } \frac{1}{\rho } \frac{\delta l }{\delta \mathbf{u} }+\oint_{ \gamma _t } \frac{1}{\rho}\frac{\delta l}{\delta\nu  } \cdot  \kappa ^l\right]  =0,
\end{equation}
where $ \kappa ^l ( \xi _g) = \xi _g g ^{-1} $ is the left Maurer-Cartan form on $ \mathcal{O} $. Note that $\frac{1}{\rho}\frac{\delta l}{\delta\nu  } \cdot  \kappa ^l$ is interpreted as a one-form on $P= M \times \mathcal{O} $, integrated along the curve $ \gamma _t \in M \times \mathcal{O} $. In a more explicit notation, the above Kelvin-Noether theorem can be written as
\begin{equation}
\frac{d}{dt} \left[ \oint_{ c _t } \frac{1}{\rho } \frac{\delta l }{\delta \mathbf{u} }+\oint_{ \gamma _t } \frac{1}{\rho}\frac{\delta l}{\delta\nu  } \cdot  \mathbf{d}g g^{-1}\right]  =0.
\end{equation}
where $\gamma_t=(c_t,g_t)$ is a loop in the
trivial bundle $M \times \mathcal{O}$.

\bigskip

\bigskip

\subsection{Momentum maps and singular solutions}\label{sec:momaps_and_singsol}

\subsubsection{Preliminaries on the Kaluza-Klein configuration space}

Recall that in order to explain the geometric properties of the singular solutions in the case of a trivial bundle, we introduced the Kaluza-Klein configuration manifold \eqref{QKK_trivial}. We now describe the corresponding object in the case of an arbitrary principal bundle. Let $S\subset M$ be a submanifold of $M$ and consider two
principal $\O$-bundles $P\rightarrow P/\mathcal{O}=M$ and
$P_S\rightarrow P_S/\mathcal{O}=S$. We say that the map
$\mathcal{Q}:P_S\rightarrow P$ is equivariant if
$\mathcal{Q}\circ\Phi_g=\Phi_g\circ\mathcal{Q}$, for all
$g\in\mathcal{O}$, where $\Phi_g$ denotes the $\mathcal{O}$-action
on $P_S$ or $P$. Such a $\mathcal{Q}$ defines a unique map
$\mathbf{Q}:S\rightarrow M$ verifying the condition
$\pi\circ\mathcal{Q}=\mathbf{Q}\circ\pi$. We now define the object that generalizes the configuration space $Q_{KK}$ \eqref{QKK_trivial}.

\begin{definition} The Kaluza-Klein
configuration space is defined as the following subset of
equivariant maps from $P_S$ to $P$:
\[
Q_{KK}
=\{\mathcal{Q}:P_S\rightarrow P\mid \mathcal{Q}\circ\Phi_g=\Phi_g\circ\mathcal{Q}\;\;\text{and}\;\;\mathbf{Q}\in\operatorname{Emb}(S,M)\}.
\]
\end{definition}
Thus, $Q_{KK}$ consists of equivariant mappings that projects onto embeddings. Another characterization of the Kaluza-Klein configuration space is given in the following (see Appendix \ref{lemma3}
for a proof of this result)

\begin{lemma}\label{lemma_Q_KK}
The Kaluza-Klein configuration space coincides with the set
\[
Q_{KK}=\operatorname{Emb}_\mathcal{O}(P_S,P)
\]
of all equivariant embeddings of $P_S$ into $P$.
\end{lemma}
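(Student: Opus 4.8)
The plan is to prove the set equality as a two-sided inclusion, noting that both sides consist of $\mathcal{O}$-equivariant maps $\mathcal{Q}:P_S\to P$, so the only content is the equivalence: the base map $\mathbf{Q}$ lies in $\operatorname{Emb}(S,M)$ if and only if $\mathcal{Q}$ is itself an embedding. The common starting point is the observation that, because $\mathcal{Q}$ is equivariant and covers $\mathbf{Q}$, its restriction to any fiber $(P_S)_s\to P_{\mathbf{Q}(s)}$ is an $\mathcal{O}$-equivariant map between two $\mathcal{O}$-torsors, hence automatically a diffeomorphism. In particular $\mathcal{Q}(P_S)=\pi^{-1}(\mathbf{Q}(S))$, and, choosing local trivializations $P_S|_V\cong V\times\mathcal{O}$ and $P|_U\cong U\times\mathcal{O}$ with $\mathbf{Q}(V)\subset U$, the map $\mathcal{Q}$ takes the normal form $\mathcal{Q}(s,g)=(\mathbf{Q}(s),\chi(s)g)$ for a smooth $\chi:V\to\mathcal{O}$, exactly as in the trivial-bundle discussion of Section~\ref{section2}. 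From this local form the differential of $\mathcal{Q}$ is injective precisely when $T\mathbf{Q}$ is injective, since left translation by $\chi(s)$ is a diffeomorphism of $\mathcal{O}$; this settles the immersion part of the equivalence in both directions at once.

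For the direction $\mathbf{Q}\in\operatorname{Emb}(S,M)\Rightarrow\mathcal{Q}\in\operatorname{Emb}_{\mathcal O}(P_S,P)$, I would argue structurally rather than pointwise. If $\mathbf{Q}$ is an embedding then $\mathbf{Q}(S)$ is a submanifold of $M$, the restricted bundle $P|_{\mathbf{Q}(S)}:=\pi^{-1}(\mathbf{Q}(S))$ is a principal $\mathcal{O}$-subbundle and a submanifold of $P$, and $\mathbf{Q}:S\to\mathbf{Q}(S)$ is a diffeomorphism. The map $\mathcal{Q}:P_S\to P|_{\mathbf{Q}(S)}$ is then an $\mathcal{O}$-equivariant bundle map covering this diffeomorphism and restricting to a diffeomorphism on each fiber, hence an isomorphism of principal bundles; composing with the inclusion $P|_{\mathbf{Q}(S)}\hookrightarrow P$ exhibits $\mathcal{Q}$ as an embedding with submanifold image $\pi^{-1}(\mathbf{Q}(S))$.

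For the converse $\mathcal{Q}\in\operatorname{Emb}_{\mathcal O}(P_S,P)\Rightarrow\mathbf{Q}\in\operatorname{Emb}(S,M)$, injectivity of $\mathbf{Q}$ follows from fiberwise surjectivity together with injectivity of $\mathcal{Q}$: if $\mathbf{Q}(s_1)=\mathbf{Q}(s_2)=x$, then both fibers map onto $P_x$, so any $p_1\in(P_S)_{s_1}$ has the same image as some $p_2\in(P_S)_{s_2}$, forcing $p_1=p_2$ and $s_1=s_2$. Immersivity of $\mathbf{Q}$ comes from the local normal form above. The remaining, and genuinely global, point is that $\mathbf{Q}$ is a homeomorphism onto its image; here I would fix a local section $\tau:U\to P$ of $\pi$ near a given $x=\mathbf{Q}(s)$ and show $\mathbf{Q}^{-1}$ is continuous: for $x_n=\mathbf{Q}(s_n)\to x$ one lifts $\tau(x_n)$ back through $\mathcal{Q}$ to points $q_n\in(P_S)_{s_n}$ with $\mathcal{Q}(q_n)=\tau(x_n)\to\tau(x)$, and since $\mathcal{Q}$ is a homeomorphism onto its image, $q_n\to\mathcal{Q}^{-1}(\tau(x))$, whence $s_n=\pi_S(q_n)\to s$ (here $\pi_S$ denotes the projection of $P_S$).

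The step I expect to be the main obstacle is exactly this last one: the passage between the \emph{topological} embedding properties of $\mathcal{Q}$ and of $\mathbf{Q}$. Injectivity and immersivity are local or torsor-theoretic and transfer cleanly, but being a homeomorphism onto the image is a global condition, so it cannot be read off the local trivialization alone and requires the local-section lifting argument above (together with, in the forward direction, the identification of the image with the submanifold $\pi^{-1}(\mathbf{Q}(S))$ and the fact that an equivariant, fiberwise-bijective bundle map covering a diffeomorphism is a principal-bundle isomorphism). One should also make explicit that the convention for $\operatorname{Emb}$ is ``injective immersion that is a homeomorphism onto its image,'' consistent with its earlier use as an open subset of $\mathcal{F}(S,M)$.
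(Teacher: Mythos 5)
Your proof is correct, but it takes a genuinely different, more structural route than the paper's. The paper's proof (Appendix \ref{lemma3}) verifies both inclusions pointwise: injectivity and immersivity are checked directly from equivariance together with freeness of the $\mathcal{O}$-action (vertical-vector arguments), and, since the paper's working definition of embedding is ``injective immersion that is a closed map,'' the topological step is a closedness argument --- which, in the direction $Q_{KK}\subset\operatorname{Emb}_\mathcal{O}(P_S,P)$, requires a sequence-extraction argument invoking \emph{properness} of the $\mathcal{O}$-action via a local section. You instead exploit the fiberwise torsor structure: the restriction of $\mathcal{Q}$ to each fiber is an equivariant map of $\mathcal{O}$-torsors, hence a diffeomorphism, which yields the image identity $\mathcal{Q}(P_S)=\pi^{-1}(\mathbf{Q}(S))$ and the local normal form $\mathcal{Q}(s,g)=(\mathbf{Q}(s),\chi(s)g)$; from this the immersion equivalence falls out in both directions at once (the paper proves it twice, by separate verticality arguments), and the direction ``$\mathbf{Q}$ embedding $\Rightarrow\mathcal{Q}$ embedding'' becomes the statement that $\mathcal{Q}$ is a principal-bundle isomorphism onto the restricted bundle $P|_{\mathbf{Q}(S)}$ composed with an inclusion --- thereby avoiding properness entirely, which is what your approach buys; your local-section lifting then settles the remaining global point (continuity of $\mathbf{Q}^{-1}$), in the same spirit as, but more economically than, the paper's sequence argument. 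The only mismatch is the convention you flag yourself: the paper proves \emph{closedness} rather than homeomorphism-onto-image, so under its convention each direction needs one extra easy line --- e.g.\ that $\mathbf{Q}(F)=\pi\bigl(\mathcal{Q}(\pi^{-1}(F))\bigr)$ is closed because $\mathcal{Q}(\pi^{-1}(F))$ is a closed $\mathcal{O}$-saturated set and saturated closed sets project to closed sets under the quotient map $\pi$ (this is exactly the paper's argument for the inclusion $\operatorname{Emb}_\mathcal{O}(P_S,P)\subset Q_{KK}$), and conversely that a homeomorphism onto the closed set $\pi^{-1}(\mathbf{Q}(S))$ is a closed map. With your stated convention, or for compact $S$ where the two notions agree, your argument is complete as written.
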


The tangent space
$T_{\mathcal{Q}}Q_{KK}$ consists of equivariant vector fields
$\mathcal{V}_{\mathcal{Q}}$ along $\mathcal{Q}$, that is
\[
\mathcal{V}_{\mathcal{Q}}(p_s)\in T_{\mathcal{Q}(p_s)}P,\;\;\text{for all $p_s\in P_S,$}\quad\text{and}\quad \mathcal{V}_\mathcal{Q}\circ\Phi_g=T\Phi_g\circ\mathcal{V}_\mathcal{Q}.
\]
Assume that the submanifold $S$ is endowed with a volume form ${\rm
d}^ks$.
The cotangent space $T^*_\mathcal{Q}Q_{KK}$ can
be identified with the space of equivariant one-forms
$\mathcal{P}_\mathcal{Q}:P_S\rightarrow T^*P$ along $\mathcal{Q}$.
Therefore, the contraction
$\mathcal{P}_\mathcal{Q}\!\cdot\!\mathcal{V}_\mathcal{Q}$ defines a
function on $S$ which can be integrated over $S$. We thus obtain the
paring
\begin{equation}\label{pairing}
\left\langle\mathcal{P}_\mathcal{Q},\mathcal{V}_\mathcal{Q}\right\rangle=\int_S\left(\mathcal{P}_\mathcal{Q}\!\cdot\!\mathcal{V}_\mathcal{Q}\right) {\rm d}^ks.
\end{equation}

The groups $\mathcal{A}ut(P)$ and $\mathcal{A}ut(P_S)$ act
naturally on $Q_{KK}$ by left and right composition,
respectively. Note that when $P$ and $P_S$ are trivial bundles, we recover the left and right actions given in \eqref{left_action_trivial} and \eqref{right_action_trivial}. As before, we denote by
\[
\mathbf{J}_L:T^*Q_{KK}\rightarrow\mathfrak{aut}(P)^*\quad\text{and}\quad \mathbf{J}_R:T^*Q_{KK}\rightarrow\mathfrak{aut}(P_S)^*
\]
the momentum maps associated to the cotangent lift of these actions. We compute below these momentum maps by introducing principal connections.

Given an equivariant map $f\in\mathcal{F}_\mathcal{O}(P_S,\mathfrak{o})$ and
$\mathcal{Q}\in\operatorname{Emb}(P_S,P)$, we define the vertical
vector field $\sigma_{\mathcal{Q}}(f)\in T_{\mathcal{Q}}Q_{KK}$ by
\[
\sigma_{\mathcal{Q}}(f)(p_s):=\left(f(p_s)\right)_P(\mathcal{Q}(p_s)).
\]
Given a vector field $V_{\mathbf{Q}}\in
T_\mathbf{Q}\operatorname{Emb}(S,M)$, a connection $\mathcal{A}$
on $P$, and $\mathcal{Q}\in Q_{KK}$ projecting to $\mathbf{Q}$, we
define the horizontal-lift
$\operatorname{Hor}^\mathcal{A}_{\mathcal{Q}}(V_\mathbf{Q})\in
T_\mathcal{Q}Q_{KK}$ of $V_{\mathbf{Q}}$ along $\mathcal{Q}$ by
\[
\operatorname{Hor}^\mathcal{A}_{\mathcal{Q}}(V_\mathbf{Q})(p_s):=\operatorname{Hor}^\mathcal{A}_{\mathcal{Q}(p_s)}(V_{\mathbf{Q}}(s)).
\]
This defines a connection dependent isomorphism
\[
\mathcal{F}_\mathcal{O}(P_S,\mathfrak{o})\times T_{\mathbf{Q}}\operatorname{Emb}(S,M)\rightarrow T_\mathcal{Q}Q_{KK},\quad (f,V_{\mathbf{Q}})\mapsto \sigma_{\mathcal{Q}}(f)+\operatorname{Hor}^\mathcal{A}_{\mathcal{Q}}(V_\mathbf{Q}),
\]
with inverse given by
\[
T_\mathcal{Q}Q_{KK}\rightarrow\mathcal{F}_\mathcal{O}(P_S,\mathfrak{o})\times T_{\mathbf{Q}}\operatorname{Emb}(S,M),\quad\mathcal{V}_{\mathcal{Q}}\mapsto \left(\mathcal{A}\circ\mathcal{V}_{\mathcal{Q}},\left[\mathcal{V}_\mathcal{Q}\right]\right)=:(f^\mathcal{A},V_\mathbf{Q}),
\]
where $\left[\mathcal{V}_\mathcal{Q}\right]\in
T_{\mathbf{Q}}\operatorname{Emb}(S,M)$ is defined by the condition
\[
\left[\mathcal{V}_\mathcal{Q}\right]\circ\pi=T\pi\circ \mathcal{V}_\mathcal{Q}.
\]
It will be useful to identify the space
$\mathcal{F}_{\mathcal{O}}(P_S,\mathfrak{o})$ of equivariant functions with
the space $\Gamma(\operatorname{Ad}P_S)$ of all sections of the
adjoint vector bundle.
Since $S$ is endowed with a volume form ${\rm d}^ks$,
the cotangent
bundle $T^*_\mathcal{Q}Q_{KK}$ can be naturally identified with the
space
\[
\mathcal{F}_\mathcal{O}(P_S,\mathfrak{o})^*\times T^*_\mathbf{Q}\operatorname{Emb}(S,M),
\]
where $\mathcal{F}_\mathcal{O}(P_S,\mathfrak{o})^*:=\mathcal{F}_\mathcal{O}(P_S,\mathfrak{o}^*)$ and $T^*_\mathbf{Q}\operatorname{Emb}(S,M)$ is defined as in the preceding section. The isomorphism is
\[
\mathcal{F}_\mathcal{O}(P_S,\mathfrak{o})^*\times T^*_\mathbf{Q}\operatorname{Emb}(S,M)\rightarrow T_{\mathcal{Q}}^*Q_{KK},\quad (\zeta,\alpha_{\mathbf{Q}})\mapsto \zeta\!\cdot\!\mathcal{A}+T^*\pi\!\cdot\!\alpha_\mathbf{Q}
\]
with inverse
\[
T_{\mathcal{Q}}^*Q_{KK}\rightarrow \mathcal{F}_\mathcal{O}(P_S,\mathfrak{o})^*\times T^*_\mathbf{Q}\operatorname{Emb}(S,M),\quad \mathcal{P}_{\mathcal{Q}}\mapsto \left(\mathbb{J}\circ\mathcal{P}_\mathcal{Q},\left(\operatorname{Hor}^\mathcal{A}_\mathcal{Q}\right)^*\mathcal{P}_\mathcal{Q}\right)=:\left(\zeta,\mathbf{P}^\mathcal{A}_\mathbf{Q}\right ),
\]
where $\mathbb{J}:T^*P\rightarrow\mathfrak{o}^*$ is defined by $\langle\mathbb{J}(\alpha_p),\xi\rangle=\langle\alpha_p,\xi_P(p)\rangle$.
Note that using these isomorphisms, the natural pairing \eqref{pairing} reads
\begin{equation}\label{split_pairing}
\int_S\left(\zeta\!\cdot\! f^\mathcal{A}+\mathbf{P}^\mathcal{A}_\mathbf{Q} \!\cdot\!V_\mathbf{Q}\right){\rm d}^ks.
\end{equation}

\subsubsection{Left-action momentum map and singular solutions} The
momentum map $\mathbf{J}_L$ associated to the cotangent lifted left
action of $\mathcal{A}ut(P)$ on $T^*Q_{KK}$ takes values in
$\mathfrak{aut}(P)^*
\simeq\mathfrak{X}(M)^*\times\mathcal{F}_\mathcal{O}(P,\mathfrak{o})^*$,
where
$\mathcal{F}_\mathcal{O}(P,\mathfrak{o})^*=\mathcal{F}_\mathcal{O}(P,\mathfrak{o}^*)\otimes\operatorname{Den}(M)$. It will be more convenient to identify
$\mathcal{F}_\mathcal{O}(P_S,\mathfrak{o})$ with the space
$\Gamma(\operatorname{Ad}P_S)$ of sections of the adjoint bundle
$\operatorname{Ad}P_S\rightarrow M$. In the same way, we will identify
$\mathcal{F}_\mathcal{O}(P_S,\mathfrak{o})^*= \mathcal{F} _ \mathcal{O} (P_S,\mathfrak{o}^*)$ with the space
$\Gamma(\operatorname{Ad}P_S)^*=\Gamma(\operatorname{Ad}^*P_S)$,
and we denote by $\bar\zeta$ the section associated to
$\zeta\in\mathcal{F}_\mathcal{O}(P_S,\mathfrak{o})^*$.
An embedding $\mathcal{Q}$ in $Q_{KK}$ induces
naturally a map
$\widetilde{\mathcal{Q}}:\operatorname{Ad}^*P_S\rightarrow\operatorname{Ad}^*P$
covering $\mathbf{Q}$.

\begin{proposition}[Left-action momentum map]
With the previous notations, the momentum map associated to the cotangent lifted left action of the automorphism group $\mathcal{A}ut(P)$ on $T^*Q_{KK}$ reads
\begin{align*}
\mathbf{J}_L\left( \mathcal{P}_\mathcal{Q}\right)=&\left(\int_S\mathbf{P}^\mathcal{A}_\mathbf{Q}(s)\delta (x-\mathbf{Q}(s)){\rm d}^ks,\int_S\widetilde{\mathcal{Q}}(\bar\zeta(s))\delta(x-\mathbf{Q}(s)){\rm d}^ks\right)\\
&\in\mathfrak{X}(M)^*\times\Gamma(\operatorname{Ad}P)^*.
\end{align*}
\end{proposition}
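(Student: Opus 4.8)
The plan is to apply the general formula for the momentum map of a cotangent-lifted action, namely $\langle\mathbf{J}_L(\mathcal{P}_\mathcal{Q}),U\rangle=\langle\mathcal{P}_\mathcal{Q},U_{Q_{KK}}(\mathcal{Q})\rangle$ for $U\in\mathfrak{aut}(P)$, and then to decompose the result using the connection $\mathcal{A}$. First I would identify the infinitesimal generator of the left action by differentiating the curve $t\mapsto\varphi_t\circ\mathcal{Q}$, where $\varphi_t$ is the flow of $U$ in $\mathcal{A}ut(P)$; this gives $U_{Q_{KK}}(\mathcal{Q})=U\circ\mathcal{Q}$, the equivariant vector field along $\mathcal{Q}$ obtained by evaluating $U$ at the points $\mathcal{Q}(p_s)$. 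Using the pairing \eqref{pairing}, this yields
\[
\langle\mathbf{J}_L(\mathcal{P}_\mathcal{Q}),U\rangle=\int_S\big(\mathcal{P}_\mathcal{Q}\!\cdot\!(U\circ\mathcal{Q})\big)\,{\rm d}^ks.
\]

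Next I would split $U$ via the isomorphism \eqref{isomorphism_A}, writing $U=\operatorname{Hor}^\mathcal{A}\mathbf{u}+\sigma(\omega)$ with $\mathbf{u}=[U]$ and $\omega=\mathcal{A}(U)$, and treat the two resulting integrals separately. For the horizontal contribution one has $(\operatorname{Hor}^\mathcal{A}\mathbf{u})\circ\mathcal{Q}=\operatorname{Hor}^\mathcal{A}_\mathcal{Q}(\mathbf{u}\circ\mathbf{Q})$, so that by the definition of the adjoint $\mathbf{P}^\mathcal{A}_\mathbf{Q}=(\operatorname{Hor}^\mathcal{A}_\mathcal{Q})^*\mathcal{P}_\mathcal{Q}$ the integrand becomes $\mathbf{P}^\mathcal{A}_\mathbf{Q}(s)\!\cdot\!\mathbf{u}(\mathbf{Q}(s))$; recognizing this as a pairing against $\mathbf{u}$ with a Dirac measure supported at $\mathbf{Q}(s)$ produces the first component $\int_S\mathbf{P}^\mathcal{A}_\mathbf{Q}(s)\,\delta(x-\mathbf{Q}(s))\,{\rm d}^ks$, exactly as in the trivial-bundle computation \eqref{left_momap_trivial}.

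For the vertical contribution I would use that $\sigma(\omega)\circ\mathcal{Q}$ evaluated at $p_s$ equals $(\omega(\mathcal{Q}(p_s)))_P(\mathcal{Q}(p_s))$, so that the very definition of the cotangent momentum map $\mathbb{J}$ gives the integrand $\langle\mathbb{J}(\mathcal{P}_\mathcal{Q}(p_s)),\omega(\mathcal{Q}(p_s))\rangle=\langle\zeta(p_s),\omega(\mathcal{Q}(p_s))\rangle$ with $\zeta=\mathbb{J}\circ\mathcal{P}_\mathcal{Q}$. The remaining, and most delicate, step is to reinterpret this fibrewise $\mathfrak{o}^*$--$\mathfrak{o}$ pairing as an $\operatorname{Ad}^*P$--$\operatorname{Ad}P$ pairing over the base, so as to extract a density-valued expression in $\Gamma(\operatorname{Ad}P)^*$. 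Here I would invoke the identifications $\widetilde{\mathcal{Q}}(\bar\zeta(s))=[\mathcal{Q}(p_s),\zeta(p_s)]_\mathcal{O}$ and $\tilde\omega(\mathbf{Q}(s))=[\mathcal{Q}(p_s),\omega(\mathcal{Q}(p_s))]_\mathcal{O}$, both well defined thanks to the $\mathcal{O}$-equivariance of $\zeta$, $\omega$ and $\mathcal{Q}$; the invariance of the fibrewise pairing under the simultaneous $\operatorname{Ad}^*_g$--$\operatorname{Ad}_{g^{-1}}$ action then guarantees that $\langle\zeta(p_s),\omega(\mathcal{Q}(p_s))\rangle=\langle\widetilde{\mathcal{Q}}(\bar\zeta(s)),\tilde\omega(\mathbf{Q}(s))\rangle$, independently of the choice of $p_s$ over $s$. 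Integrating against the Dirac measure yields the second component $\int_S\widetilde{\mathcal{Q}}(\bar\zeta(s))\,\delta(x-\mathbf{Q}(s))\,{\rm d}^ks$, completing the identification. I expect the bookkeeping of these adjoint-bundle identifications---verifying that $\widetilde{\mathcal{Q}}$ covers $\mathbf{Q}$ and is compatible with the chosen equivariance conventions for $\mathcal{F}_\mathcal{O}(P_S,\mathfrak{o})^*\cong\Gamma(\operatorname{Ad}^*P_S)$---to be the main obstacle, whereas the horizontal part is essentially a transcription of the trivial case.
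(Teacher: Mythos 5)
Your proposal is correct and follows essentially the same route as the paper's proof: both apply the standard cotangent-lift momentum map formula to the infinitesimal generator $U\circ\mathcal{Q}$, split via the connection $\mathcal{A}$, identify the horizontal part with $\mathbf{P}^\mathcal{A}_\mathbf{Q}=(\operatorname{Hor}^\mathcal{A}_\mathcal{Q})^*\mathcal{P}_\mathcal{Q}$ and the vertical part with $\zeta=\mathbb{J}\circ\mathcal{P}_\mathcal{Q}$, and pass to the adjoint-bundle pairing via $\widetilde{\mathcal{Q}}$ using equivariance. The only cosmetic difference is that you decompose the Lie algebra element $U=\operatorname{Hor}^\mathcal{A}\mathbf{u}+\sigma(\omega)$ before pairing, whereas the paper decomposes the covector side directly through the split pairing \eqref{split_pairing}; these are dual descriptions of the identical computation.
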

\begin{proof} Using the formula for the momentum map associated to a cotangent lifted action and formula \eqref{split_pairing}, we have
\begin{align*}
\left\langle\mathbf{J}_L\left( \mathcal{P}_\mathcal{Q}\right),U\right\rangle&=\int_S\mathcal{P}_\mathcal{Q}(p_s)\!\cdot\!U(\mathcal{Q}(p_s)){\rm d}^ks=\int_S\left(\zeta(p_s)\!\cdot\!\mathcal{A}(U(\mathcal{Q}(p_s)))+\mathbf{P}^\mathcal{A}_ \mathbf{Q} (s)\!\cdot\![U\circ\mathcal{Q}](s)\right){\rm d}^ks\\
&=\int_S\left(\bar\zeta(s)\!\cdot\!\overline{\mathcal{A}\circ U\circ\mathcal{Q}}(s)+\mathbf{P}^\mathcal{A}_\mathbf{Q} (s)\!\cdot\!([U]\circ\mathbf{Q}(s))\right){\rm d}^ks\\
&=\int_S\left(\widetilde{\mathcal{Q}}(\bar\zeta(s))\!\cdot\!\overline{\mathcal{A}\circ U}(\mathbf{Q}(s))+\mathbf{P}^\mathcal{A}_ \mathbf{Q} (s)\!\cdot\![U](\mathbf{Q}(s))\right){\rm d}^ks.
\end{align*}
Note that in the first line, the expression $\mathcal{P}_ \mathcal{Q} (p_s)\!\cdot\!U(\mathcal{Q}(p_s))$ only depends on $s$ and not on $p_s$, by equivariance. Thus it makes sense to integrate it on $S$. From the last equality we obtain the desired expression.
\end{proof}

\medskip

As in the trivial case, for a certain class of Hamiltonians $h:\mathfrak{aut}(P)\rightarrow\mathbb{R}$ we can define the collective Hamiltonian $H:=h\circ\mathbf{J}_L:T^*Q_{KK}\rightarrow\mathbb{R}$. Since $\mathbf{J}_L:T^*Q_{KK}\to\mathfrak{aut}(P)^*$ is a Poisson map, a solution of Hamilton's equations for $H$ on $T^*Q_{KK}$ gives a (possibly measure-valued) solution of the EP$\mathcal{A}ut$ equation associated to the Hamiltonian $h$.
The trivial bundle case is treated in Appendix \ref{trivial_case}.

\subsubsection{Right-action momentum map and Noether's Theorem} We
now compute the momentum map $\mathbf{J}_R$ associated to the right
action of $\mathcal{A}ut(P_S)$ on $T^*Q_{KK}$. The Lie algebra is
denoted by $\mathfrak{aut}(P_S)$ and consists of equivariant vector
fields on $P_S$. The infinitesimal generator associated to
$U\in\mathfrak{aut}(P_S)$ is $\mathbf{d}\mathcal{Q}\circ U$. Thus,
the momentum map is simply given by
\[
\mathbf{J}_R(\mathcal{P}_\mathcal{Q})=\mathcal{P}_ \mathcal{Q} \!\cdot\!\mathbf{d}\mathcal{Q}\in\mathfrak{aut}(P_S)^*.
\]
In the following proposition we give a more concrete expression for
$\mathbf{J}_R$, by fixing principal connections $\mathcal{A}_S$ and $\mathcal{A}$ on $P_S$ and $P$, respectively.

\begin{proposition}[Right-action momentum map] The momentum map associated to the cotangent lifted right action of the automorphism group of $P_S$ on $T^*Q_{KK}$ reads
\begin{equation}\label{altern}
\mathbf{J}_R\left( \mathcal{Q},\mathbf{P}^\mathcal{A}_{\mathbf{Q} },\zeta \right) =\left(\mathbf{P}^\mathcal{A}_{\mathbf{Q} }\!\cdot\!\mathbf{d}\mathbf{Q}+\zeta\cdot\left(\mathcal{Q}^*\A-\A_S\right),\zeta\right)\in\X(S)^*\x\F_\O(P_S,\ou)^*,
\end{equation}
where  $\mathbf{P}^\mathcal{A}_{\mathbf{Q}}=\left(\operatorname{Hor}^\mathcal{A}_\mathcal{Q}\right)^*\mathcal{P}_\mathcal{Q}$ and $\zeta:=\mathbb{J}\circ \mathcal{P}_\mathcal{Q}$.
\end{proposition}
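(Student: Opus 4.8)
The plan is to start from the intrinsic expression $\mathbf{J}_R(\mathcal{P}_\mathcal{Q})=\mathcal{P}_\mathcal{Q}\cdot\mathbf{d}\mathcal{Q}\in\mathfrak{aut}(P_S)^*$ already obtained above, which followed from the general cotangent-lift formula $\langle\mathbb{J}(\alpha_q),\xi\rangle=\langle\alpha_q,\xi_Q(q)\rangle$ together with the fact that the infinitesimal generator of $U\in\mathfrak{aut}(P_S)$ is $\mathbf{d}\mathcal{Q}\circ U$. The only remaining task is to rewrite this element of $\mathfrak{aut}(P_S)^*$ through the connection-dependent dual splitting $\mathfrak{aut}(P_S)^*\simeq\X(S)^*\x\F_\O(P_S,\ou)^*$ of \eqref{isomorphism_A_dual}, now built from the connection $\A_S$ on $P_S$. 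Since this dual splitting is adjoint to the primal one $(\mathbf{v},\omega_S)\mapsto\Hor^{\A_S}\mathbf{v}+\sigma(\omega_S)$, I would recover the two legs of $\mathbf{J}_R$ by pairing $\mathcal{P}_\mathcal{Q}\cdot\mathbf{d}\mathcal{Q}$ separately against a horizontal lift $\Hor^{\A_S}\mathbf{v}$ (giving the $\X(S)^*$-component) and against a vertical fundamental field $\sigma(\omega_S)$ (giving the $\F_\O(P_S,\ou)^*$-component).

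The vertical (second) component is the quicker computation. Pairing with $\sigma(\omega_S)$ and using the equivariance of $\mathcal{Q}$, which forces $\mathbf{d}\mathcal{Q}\circ\xi_{P_S}=\xi_P\circ\mathcal{Q}$ for every $\xi\in\ou$, one gets $\mathbf{d}\mathcal{Q}\circ\sigma(\omega_S)=\sigma_\mathcal{Q}(\omega_S)$, exactly the vertical field along $\mathcal{Q}$ introduced earlier. Then $\int_S\mathcal{P}_\mathcal{Q}\cdot\sigma_\mathcal{Q}(\omega_S)\,{\rm d}^ks=\int_S\langle\mathbb{J}\circ\mathcal{P}_\mathcal{Q},\omega_S\rangle\,{\rm d}^ks$ by the very definition of $\mathbb{J}$, which identifies the second component with $\zeta=\mathbb{J}\circ\mathcal{P}_\mathcal{Q}$, as claimed.

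For the horizontal (first) component I would decompose $\mathbf{d}\mathcal{Q}(\Hor^{\A_S}\mathbf{v})$, a vector along $\mathcal{Q}$ in $TP$, into its $\A$-horizontal and $\A$-vertical parts at each point $\mathcal{Q}(p_s)$. Because $\pi\circ\mathcal{Q}=\mathbf{Q}\circ\pi$, its base projection is $T\mathbf{Q}(\mathbf{v})$, so the horizontal part is $\Hor^{\A}_{\mathcal{Q}}(T\mathbf{Q}(\mathbf{v}))$; pairing this with $\mathcal{P}_\mathcal{Q}$ and invoking the definition $\mathbf{P}^{\A}_{\mathbf{Q}}=(\Hor^{\A}_{\mathcal{Q}})^*\mathcal{P}_\mathcal{Q}$ contributes the standard embedding term $\mathbf{P}^{\A}_{\mathbf{Q}}\cdot\mathbf{d}\mathbf{Q}$. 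The vertical part is $\big(\A(\mathbf{d}\mathcal{Q}(\Hor^{\A_S}\mathbf{v}))\big)_P(\mathcal{Q})$, whose $\mathbb{J}$-pairing with $\mathcal{P}_\mathcal{Q}$ produces $\zeta$ contracted with $\A(\mathbf{d}\mathcal{Q}(\Hor^{\A_S}\mathbf{v}))=(\mathcal{Q}^*\A)(\Hor^{\A_S}\mathbf{v})$. The crucial simplification is that $\A_S(\Hor^{\A_S}\mathbf{v})=0$, so this equals $(\mathcal{Q}^*\A-\A_S)(\Hor^{\A_S}\mathbf{v})$, yielding precisely the term $\zeta\cdot(\mathcal{Q}^*\A-\A_S)$.

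The main obstacle I expect is not the bookkeeping but the careful handling of $\mathcal{Q}^*\A-\A_S$ and the resulting well-definedness on $S$. I would check that this $\ou$-valued one-form on $P_S$ is both $\O$-equivariant, which is immediate from $\mathcal{Q}\circ\Phi_g=\Phi_g\circ\mathcal{Q}$ and the transformation law of connections, and horizontal: on a vertical vector one has $(\mathcal{Q}^*\A)(\xi_{P_S})=\A(\xi_P\circ\mathcal{Q})=\xi=\A_S(\xi_{P_S})$, so the difference annihilates vertical vectors and hence lies in $\overline{\Omega^1}(P_S,\ou)$. This is exactly what guarantees that $\zeta\cdot(\mathcal{Q}^*\A-\A_S)$ is a basic one-form descending to an element of $\X(S)^*$, so that its pairing against $\mathbf{v}$ through $\Hor^{\A_S}\mathbf{v}$ is independent of the lift $p_s$ and the final identity $\mathbf{J}_R=\big(\mathbf{P}^{\A}_{\mathbf{Q}}\cdot\mathbf{d}\mathbf{Q}+\zeta\cdot(\mathcal{Q}^*\A-\A_S),\,\zeta\big)$ makes sense in $\X(S)^*\x\F_\O(P_S,\ou)^*$.
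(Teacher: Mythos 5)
Your proposal is correct and follows essentially the same route as the paper: both start from the intrinsic formula $\mathbf{J}_R(\mathcal{P}_\mathcal{Q})=\mathcal{P}_\mathcal{Q}\cdot\mathbf{d}\mathcal{Q}$, split via the connection-dependent isomorphisms, use equivariance to get $\mathbf{d}\mathcal{Q}\circ\sigma(\omega_S)=\sigma_\mathcal{Q}(\omega_S)$, and hinge on the identity $\mathcal{A}\circ\mathbf{d}\mathcal{Q}\circ\operatorname{Hor}^{\mathcal{A}_S}=\left(\mathcal{Q}^*\mathcal{A}-\mathcal{A}_S\right)\circ\operatorname{Hor}^{\mathcal{A}_S}$ (the paper reaches it by decomposing a general $U$ into $\operatorname{Hor}^{\mathcal{A}_S}[U]+\left(\mathcal{A}_S(U)\right)_{P_S}$ inside one pairing chain, you equivalently pair separately against horizontal and vertical generators). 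Your explicit check that $\mathcal{Q}^*\mathcal{A}-\mathcal{A}_S$ is equivariant and horizontal, hence basic, is a welcome addition that the paper uses only implicitly.
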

\begin{proof}
Using the connection dependent isomorphisms
\[
U\in \mathfrak{aut}(P_S)\simeq \mathfrak{X}(S)\times\mathcal{F}_\mathcal{O}(P_S,\mathfrak{o})\ni \left([U],\mathcal{A}_S(U)\right)
\]
and
\[
\mathcal{P}\in T^*_\mathcal{Q}Q_{KK}\simeq \mathcal{F}_\mathcal{O}(P_S,\mathfrak{o})^*\times T_\mathbf{Q}^*\operatorname{Emb}(S,M)\ni (\zeta,\mathbf{P}^\mathcal{A}),
\]
we find
\begin{align}\label{intermiediate_computation}
&\left\langle\mathbf{J}_R(\mathcal{Q},\mathbf{P}^\mathcal{A}_{\mathbf{Q} },\zeta),([U],\mathcal{A}_S(U))\right\rangle=\left\langle\mathbf{J}_R(\mathcal{Q},\mathcal{P}),U\right\rangle=\langle\mathcal{P},\mathbf{d}\mathcal{Q}\circ U\rangle\nonumber\\
&\qquad=\left\langle\mathbf{P}^\mathcal{A}_{\mathbf{Q} },[\mathbf{d}\mathcal{Q}\circ U]\rangle+\langle \zeta,\mathcal{A}(\mathbf{d}\mathcal{Q}\circ U)\right\rangle\nonumber\\
&\qquad=\left\langle\mathbf{P}^\mathcal{A}_{\mathbf{Q} },\mathbf{d}\mathbf{Q}\circ [U]\right\rangle+\left\langle \zeta,\mathcal{A}(\mathbf{d}\mathcal{Q}\circ \operatorname{Hor}^{\mathcal{A}_S}[U]+\mathbf{d}\mathcal{Q}\circ\left(\mathcal{A}_S(U)\right)_{P_S})\right\rangle\nonumber\\
&\qquad=\left\langle\mathbf{P}^\mathcal{A}_{\mathbf{Q} },\mathbf{d}\mathbf{Q}\circ [U]\right\rangle+\left\langle \zeta,\mathcal{A}(\mathbf{d}\mathcal{Q}\circ \operatorname{Hor}^{\mathcal{A}_S}[U])\right\rangle+\left\langle \zeta,\mathcal{A}\left(\left(\mathcal{A}_S(U)\right)_{P}\circ\mathcal{Q}\right)\right\rangle\nonumber\\
&\qquad=\left\langle\mathbf{P}^\mathcal{A}_{\mathbf{Q} }\!\cdot\!\mathbf{d}\mathbf{Q}+\zeta\!\cdot\!(\mathcal{A}\circ \mathbf{d}\mathcal{Q}\circ \operatorname{Hor}^{\mathcal{A}_S}),[U]\right\rangle+\left\langle \zeta,\mathcal{A}_S(U)\right\rangle,
\end{align}
where $\langle\,,\rangle$ denotes the $L^2$ pairing.
Now we observe that
\begin{align*}
\mathcal{A}\circ \mathbf{d}\mathcal{Q}\circ \operatorname{Hor}^{\mathcal{A}_S}[U]&=\mathcal{A}\circ\mathbf{d}\mathcal{Q}\circ U-\mathcal{A}\circ\mathbf{d}\mathcal{Q}\circ(\mathcal{A}_S(U))_P\\
&=\mathcal{A}\circ\mathbf{d}\mathcal{Q}\circ U-\mathcal{A}\circ(\mathcal{A}_S(U)\circ\mathcal{Q})_P=\left(\mathcal{Q}^*\mathcal{A}-\mathcal{A}_S\right)(U)
\end{align*}
as required.
\end{proof}
\rem{ 
The difference between two principal connection forms is a horizontal $\mathfrak{o}$-valued equivariant 1-form, so if we pair it with an equivariant function $\ze\in\F(P_S,\ou^*)$, we get a horizontal invariant 1-form, hence a pull-back of a 1-form on the base manifold $S$.
}  
\\
The left momentum map $\mathbf{J}_L$ is invariant under the cotangent lift of composition on the right by elements in $\mathcal{A}ut(P_S)$. Thus, the collective Hamiltonian $H:=h\circ\mathbf{J}_L$ is also right invariant and by Noether's theorem, we have
\[
\frac{d}{dt}\mathcal{P}_t\!\cdot \!\mathbf{d} \mathcal{Q}_t=0,
\]
where $\mathcal{P}_t\in T^*_{\mathcal{Q}_t}Q_{KK}$ is a solution of Hamilton's equations. This can be written as
\begin{equation}\label{conservation_law_theta}
\frac{d}{dt}\mathcal{P}_t^*\Theta_P=0,
\end{equation}
where $\Theta_P$ is the canonical one-form on $T^*P$ and $\mathcal{P}_t$ is interpreted as a map $\mathcal{P}_t:S\rightarrow T^*M$ covering $\mathcal{Q}_t$.
Introducing principal connections $\mathcal{A}$ and $\mathcal{A}_S$ on $P$ and $P_S$ we have the conservation laws
\[
\frac{d}{dt}\left(\mathbf{P}^\mathcal{A}_{\mathbf{Q} }\!\cdot\!\mathbf{d} \mathbf{Q}+\zeta\cdot\left(\mathcal{Q}^*\A-\A_S\right)\right)=0\quad\text{and}\quad \frac{d}{dt}\zeta=0,
\]
where we recall that $\mathbf{P}^\mathcal{A}_{\mathbf{Q}}=\left(\operatorname{Hor}^\mathcal{A}_\mathcal{Q}\right)^*\mathcal{P}_\mathcal{Q}$ and $\zeta:=\mathbb{J}\circ \mathcal{P}_\mathcal{Q}$.
The resulting dual pair is illustrated in the following diagram

\begin{picture}(150,100)(-70,0)%
\put(93,76){$T^* \operatorname{\!Emb}_\mathcal{O}(P_S,P)$}

\put(90,50){$\mathbf{J}_L$}

\put(160,50){$\mathbf{J}_R$}

\put(65,15){$\mathfrak{aut}(P)^*$
}

\put(165,16){$\mathfrak{aut}(P_S)^*$
}

\put(130,70){\vector(-1, -1){40}}

\put(135,70){\vector(1,-1){40}}

\end{picture}\\
Note that in the particular case $S=M$ and $P_S=P$, we have $\operatorname{\!Emb}_\mathcal{O}(P,P)=\mathcal{A}ut(P)$, since equivariant embeddings of $P_S=P$ into $P$ are automorphisms. In this case, $\mathbf{J}_L$ is the usual Lagrange-to-Euler map, and $\mathbf{J}_R$ is the conserved momentum density. 

The trivial bundle case is treated in Appendix \ref{trivial_case}, where it is shown that the right momentum map has the expression
\[
\mathbf{J}_R\left(\mathcal{Q},\mathbf{P}^\mathcal{A}_{ \mathbf{Q} },\kappa_\theta\right)
=
\left(\mathbf{P}_{ \mathbf{Q} }\!\cdot\!\mathbf{d}\mathbf{Q}+\kappa_\theta\!\cdot\!\left(\mathbf{d}\theta-
\theta A_S\right),\theta^{-1}\kappa_\theta\right),
\]
which recovers \eqref{momap_right_trivial} when $A_S$ is the
trivial connection.

\begin{remark}[Physical interpretation of the connection $A_S$]{\rm
When $\mathcal{O}=S^1$, the quantity $A_S$ may be interpreted as a
magnetic potential that is localized on the subbundle $P_S$, where
the whole motion takes place. For example, if we think of a trivial
subbundle $P_S=S\times\mathcal{O}$ where ${\rm dim}S=2$, then the
charged particles moving on $S$ produce a current sheet carrying a
magnetic field $B_S={\bf d}A_S$ normal to the sheet. This suggestive
picture is consistent with the concept that the singular solutions
possess their own dynamics, independently of the properties of the
ambient space. Moreover, we recall that the first component of ${\bf
J}_R$ yields the Clebsch representation of a fluid variable in
$\mathfrak{aut}(P_S)^*$. In this case, the connection one-form $A_S$
on $S$ appears naturally in the Clebsch representation, due to the
principal bundle structure of the embedded subspace $P_S=S\times\O$.
}
\end{remark}



\section{Incompressible EP$\!\mathcal{A}ut$ flows}\label{Sec:EPAut_vol}

At this point of the paper, a natural question concerns the
restriction of the above geodesic flows on the $\Aut(P)$ group to
the volume-preserving case. This question is motivated by
the fact that such construction yields a two-component
Euler system in which an incompressible fluid flow transports a
Yang-Mills charge, under the influence of an external magnetic
potential. This picture generalizes Arnold's well known construction
of Euler's equation as a geodesic on Diff$_{\rm vol}(M)$ to the
formulation of a geodesic fluid flow on $\Aut_{\rm vol}(P)$. In the
case of a trivial bundle $P=M\times\O$, one has $\Aut_{\rm
vol}(P)\simeq {\rm Diff}_{\rm
vol}(M)\,\circledS\,\mathcal{F}(M,\mathcal{O} )$ and the incompressible 
EP$\Aut$ equation (EP$\Aut_{\rm vol}$) is then a geodesic equation on a
semidirect-product Lie group. In this setting, natural questions
arise about how the geometric properties of ideal fluid flows
transfer to this more general situation. For example, the system can
be written in terms of the vorticity $\boldsymbol{\omega} ={\rm
curl}{\bf u}$ and one can ask how the dual pair construction of
\cite{MaWe83} applies to this case.

\subsection{The group of volume preserving automorphisms}

Let $\pi:P\rightarrow M$ be a \textit{right} principal $\O$-bundle and suppose that $M$ is orientable, endowed with a Riemannian metric $g$. Let $\mu_M$ be the volume form induced by $g$.

The group $\Aut_{\vol}(P)$ consists, by definition, of the automorphisms of the principal bundle $P$ which descend to volume preserving diffeomorphisms of the base manifold $M$ with respect to the volume form $\mu_M$. In other words, given an automorphism $\varphi\in\mathcal{A}ut(P)$, we have by definition
\[
\varphi\in \Aut_{\vol}(P)\Leftrightarrow \bar{\varphi}\in\operatorname{Diff}_{\vol}(M),
\]
where $\operatorname{Diff}_{\vol}(M)$ is the group of volume preserving diffeomorphisms of $M$. Its Lie algebra, denoted by $\mathfrak{aut}_{\vol}(P)$ consists of equivariant vector fields $U$ such that their projection is divergence free, that is, for $U\in\mathfrak{aut}(P)$ we have
\[
U\in \mathfrak{aut}_{\vol}(P)\quad\Leftrightarrow\quad [U]\in\mathfrak{X}_{\vol}(M).
\]
Hence, given a principal connection $\mathcal{A}$ on $P$, the Lie algebra $\mathfrak{aut}_{\vol}(P)$ decomposes under the isomorphism \eqref{isomorphism_A} as
\begin{equation}\label{akau}
\mathfrak{aut}_{\vol}(P)\rightarrow\mathfrak{X}_{\vol}(M)\times\mathcal{F}_\mathcal{O}(P,\mathfrak{o}),\quad U\mapsto ([U],\mathcal{A}(U)).
\end{equation}

An inner product $\gamma$ on $\mathfrak{o}$ being also given, one defines the Kaluza-Klein Riemannian metric on $P$ as
\[
\ka(U_p,V_p)=g(T\pi(U_p),T\pi(V_p))+\ga(\A(U_p),\A(V_p)),\quad U_p, V_p\in T_pP.
\]
The induced volume form $\mu_P$ on $P$ is
\[
\mu_P:=\pi^*\mu_M\wedge\mathcal{A}^*\rm{det} _\ga,
\]
where $\mathcal{A}^*\det_\ga$ denotes the pullback by the connection $\mathcal{A}:TP\to\mathfrak{o}$ of the canonical determinant form induced by $\ga$ on $\mathfrak{o}$, that is $\det_\ga\in\wedge^k\mathfrak{o}^*$, $k={\dim\mathfrak{o}}$, defined by
\[
\operatorname{det}_{\ga}(\xi_1,...,\xi_k):=\sqrt{\operatorname{det}(\gamma(\xi_i,\xi_j))}.
\]
where $\xi_1,\dots, \xi_k\in\mathfrak{o}$ is a positively oriented basis.

We now suppose that $\gamma$ is $\operatorname{Ad}$-invariant. In this case the Kaluza-Klein metric $\ka$ and the volume form $\mu_P$ are $\mathcal{O}$-invariant.
More general $\O$-invariant metrics on $P$ are considered in \cite{Molitor}.

\begin{lemma}\label{volume_preserving_aut} Let $\varphi\in\mathcal{A}ut(P)$ be a principal bundle automorphism. Then we have the equivalence
\[
\varphi\in\mathcal{A}ut_{\vol}(P)\Leftrightarrow \varphi^*\mu_P=\mu_P.
\]
\end{lemma}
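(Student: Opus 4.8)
The plan is to reduce the condition $\varphi^*\mu_P=\mu_P$ on the total space $P$ to the condition $\bar\varphi^*\mu_M=\mu_M$ on the base, exploiting the factorization $\mu_P=\pi^*\mu_M\wedge\mathcal{A}^*\operatorname{det}_\ga$ of the Kaluza--Klein volume into a horizontal part pulled back from $M$ and a vertical part built from the connection. First I would pull back each factor separately. Because $\varphi$ is an automorphism, $\pi\circ\varphi=\bar\varphi\circ\pi$, and hence $\varphi^*\pi^*\mu_M=(\bar\varphi\circ\pi)^*\mu_M=\pi^*\bar\varphi^*\mu_M$. For the vertical factor one has $\varphi^*(\mathcal{A}^*\operatorname{det}_\ga)=(\varphi^*\mathcal{A})^*\operatorname{det}_\ga$, where $\varphi^*\mathcal{A}=\mathcal{A}\circ T\varphi$ is viewed as an $\mathfrak{o}$-valued one-form on $P$. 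The main obstacle is precisely that $\varphi$ need not preserve the connection, so $(\varphi^*\mathcal{A})^*\operatorname{det}_\ga\neq\mathcal{A}^*\operatorname{det}_\ga$ in general.

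To overcome this, the key observation is that $\varphi^*\mathcal{A}$ is again a principal connection on $P$. Indeed, its $\mathcal{O}$-equivariance, $\Phi_g^*(\varphi^*\mathcal{A})=\operatorname{Ad}_{g^{-1}}\circ\,\varphi^*\mathcal{A}$, follows from $\varphi\circ\Phi_g=\Phi_g\circ\varphi$ together with the equivariance of $\mathcal{A}$, while the reproducing property $(\varphi^*\mathcal{A})(\xi_P)=\xi$ follows from the fact that an equivariant diffeomorphism sends fundamental vector fields to fundamental vector fields, $T\varphi\circ\xi_P=\xi_P\circ\varphi$. Consequently $\tau:=\varphi^*\mathcal{A}-\mathcal{A}$ is a tensorial (horizontal, equivariant) $\mathfrak{o}$-valued one-form, so in particular it vanishes on all vertical vectors, and $\varphi^*\mathcal{A}$ agrees with $\mathcal{A}$ on the vertical subbundle.

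Finally I would compare the two top forms on an adapted frame $(H_1,\dots,H_n,(\xi_1)_P,\dots,(\xi_k)_P)$ of $TP$, where the $H_i$ are horizontal lifts of a frame on $M$ (with $n=\dim M$) and the $(\xi_j)_P$ are fundamental fields for a basis of $\mathfrak{o}$ (with $k=\dim\mathfrak{o}$). Since $\pi^*\alpha$ annihilates vertical vectors, any wedge of the form $\pi^*\alpha\wedge\eta^*\operatorname{det}_\ga$ with $\alpha$ an $n$-form on $M$ is nonzero on this frame only through the term feeding all horizontal vectors to $\pi^*\alpha$ and all vertical vectors to $\eta^*\operatorname{det}_\ga$; thus it depends on the $\mathfrak{o}$-valued one-form $\eta$ only through its restriction to the vertical subbundle. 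Applying this with $\eta=\varphi^*\mathcal{A}$ and $\eta=\mathcal{A}$, which coincide on vertical vectors, yields $\varphi^*\mu_P=\pi^*\bar\varphi^*\mu_M\wedge\mathcal{A}^*\operatorname{det}_\ga$. Subtracting $\mu_P$ gives $\varphi^*\mu_P-\mu_P=\pi^*(\bar\varphi^*\mu_M-\mu_M)\wedge\mathcal{A}^*\operatorname{det}_\ga$, and since $\mathcal{A}^*\operatorname{det}_\ga$ is nowhere vanishing on the vertical subbundle and $\pi^*$ is injective on forms, this vanishes if and only if $\bar\varphi^*\mu_M=\mu_M$, i.e. $\bar\varphi\in\operatorname{Diff}_{\vol}(M)$, which is exactly the definition of $\varphi\in\mathcal{A}ut_{\vol}(P)$.
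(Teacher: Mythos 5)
Your proof is correct, and its overall architecture is the same as the paper's: factor $\mu_P=\pi^*\mu_M\wedge\mathcal{A}^*\operatorname{det}_\gamma$, use $\pi\circ\varphi=\bar\varphi\circ\pi$ on the first factor, observe that $\varphi^*\mathcal{A}$ is again a principal connection (the paper asserts this without the verification you supply), arrive at $\varphi^*\mu_P=\pi^*\bar\varphi^*\mu_M\wedge\mathcal{A}^*\operatorname{det}_\gamma$, and settle the converse by feeding horizontal lifts of an arbitrary frame on $M$ into the resulting identity --- your closing step is essentially verbatim the paper's. Where you genuinely diverge is the middle step. The paper argues that the $\mathcal{O}$-invariant $k$-form $(\varphi^*\mathcal{A})^*\operatorname{det}_\gamma-\mathcal{A}^*\operatorname{det}_\gamma$ is horizontal, hence equals $\pi^*\alpha$ for some $\alpha\in\Omega^k(M)$, and then kills it via $\pi^*\bar\varphi^*\mu_M\wedge\pi^*\alpha=\pi^*(\bar\varphi^*\mu_M\wedge\alpha)=0$, since $\bar\varphi^*\mu_M\wedge\alpha$ is an $(n+k)$-form on the $n$-manifold $M$. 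You instead note that $\tau=\varphi^*\mathcal{A}-\mathcal{A}$ is tensorial, so the two connections agree on the vertical subbundle, and that a top form $\pi^*\alpha\wedge\eta^*\operatorname{det}_\gamma$ sees $\eta$ only through its vertical restriction (your shuffle argument on an adapted frame). Your version is in fact the more robust one: for $\dim\mathfrak{o}\geq 2$ the paper's intermediate claim is not literally true, since expanding $(\mathcal{A}+\tau)^*\operatorname{det}_\gamma$ multilinearly produces cross terms such as $\operatorname{det}_\gamma\bigl(\mathcal{A}(V),\tau(H),\dots\bigr)$, which need not vanish on a mixed vertical/horizontal tuple; the difference is invariant but not horizontal in the standard sense (it is horizontal when $k=1$, e.g.\ $\mathcal{O}=S^1$). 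What is true, and all the wedge computation needs, is that every such cross term contains at least one semibasic factor $\tau$ and therefore dies against the pullback of a top form from $M$ --- which is precisely what your frame argument establishes, since each term of the difference vanishes on purely vertical $k$-tuples. So the paper's route buys a one-line structural assertion (valid as stated only in the rank-one case), while yours buys an airtight pointwise argument that in effect repairs that step in full generality.
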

\begin{proof} The pull-back $\mathcal{A}_\ph:=\ph^*\mathcal{A}$ of a principal connection $\A$ by an automorphism $\ph$ of $P$ is a principal connection. Therefore, the $\O$-invariant form $\A_\ph^*\det_\ga-\A^*\det_\ga\on\in \Om^k(P)$ is horizontal, so it is the pull-back $\pi^*\al$ for some $\al\in\Om^k(M)$.
Denoting by $\bar\ph$ the diffeomorphism of $M$ induced by $\ph$,
we compute
\begin{align*}
\ph^*\mu_P&=\ph^*(\pi^*\mu_M\wedge\A^*{\det}_\ga) =\ph^*\pi^*\mu_M\wedge\A_\ph^*{\det}_\ga\\
&=\pi^*\bar\ph^* \mu_M\wedge(\A^*{\det}_\ga+\pi^*\al)
=\pi^*\bar\ph^* \mu_M\wedge\A^*{\det}_\ga.
\end{align*}
With this formula, it is clear that any $\ph\in\mathcal{A}ut_{\vol}(P)$ preserves the volume form $\mu_P$.

Conversely, if $\ph$ preserves $\mu_P$, then we have
\[
\pi^*\bar\ph^* \mu_M\wedge\A^*{\det}_\ga=\pi^* \mu_M\wedge\A^*{\det}_\ga.
\]
Plugging in the horizontal lifts of $n=\dim M$ arbitrary vector fields $\mathbf{u}_1,\dots, \mathbf{u}_n$, we obtain
\[
\pi^*((\bar\ph^*\mu_M-\mu_M)(\mathbf{u}_1,\dots,\mathbf{u}_n))\A^*{\det}_\ga=0,
\]
hence $\bar\ph^* \mu_M=\mu_M$.
\end{proof}

\medskip

As a consequence, the Lie algebra $\mathfrak{aut}_{\vol}(P)$ coincides with the Lie algebra of equivariant divergence free vector fields with respect to the volume form $\mu_P$ induced by the Kaluza-Klein metric on $P$.

\medskip

We will also need the following result.

\begin{lemma}\label{integration_invariant} Assume that $ \mathcal{O} $ is compact and consider an $ \mathcal{O} $-invariant function $f$ on $P$ with compact support. Then we have the formula
\begin{equation}\label{integra}
\int_Pf(p)\mu_P=\operatorname{Vol}(\O)\int_M\tilde f(m)\mu_M,
\end{equation}
where $\operatorname{Vol}(\O)$ denotes the Riemannian volume of $ \mathcal{O} $ relative to the bi-invariant metric induced by $\gamma$, and $\tilde f$ is the function induced on $M$ by the formula $\tilde f\circ \pi= f$.
\end{lemma}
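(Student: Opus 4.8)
The plan is to reduce the global integral over $P$ to a fiberwise Fubini computation by trivializing the bundle, the decisive point being that on each trivializing chart the Kaluza-Klein volume $\mu_P=\pi^*\mu_M\wedge\A^*\det_\ga$ factors as the product of $\mu_M$ on the base and the bi-invariant volume form on the fiber $\O$.

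First I would fix an open cover $\{U_a\}$ of $M$ over which $P$ admits local sections $s_a:U_a\to P$, inducing trivializations $\Psi_a:U_a\times\O\to\pi^{-1}(U_a)$, $(x,g)\mapsto\Phi_g(s_a(x))$, and choose a partition of unity $\{\chi_a\}$ on $M$ subordinate to $\{U_a\}$. Since $\mu_P$ is a nowhere-vanishing top form it orients $P$, and writing $f=\sum_a(\chi_a\circ\pi)f$ splits $\int_P f\,\mu_P$ into finitely many (by compact support) integrals each supported in a single $\pi^{-1}(U_a)$; by linearity it then suffices to establish the formula on one such chart $\Psi:U\times\O\to\pi^{-1}(U)$.

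The central step is to show that $\Psi^*\mu_P=\pr_U^*\mu_M\wedge\pr_\O^*\omega_\O$, where $\omega_\O$ is the bi-invariant volume form on $\O$ determined by $\ga$, of total mass $\operatorname{Vol}(\O)$. Since $\pi\circ\Psi=\pr_U$ we have $\Psi^*\pi^*\mu_M=\pr_U^*\mu_M$. For the second factor I would use the standard trivialization formula $\Psi^*\A=\operatorname{Ad}_{g^{-1}}\!\pr_U^*(s^*\A)+\theta$, where $\theta$ is the left Maurer--Cartan form pulled back from $\O$: this follows from $\A(\xi_P)=\xi$ together with the definition of $\Psi$. The first summand annihilates vertical vectors and is built from base one-forms, so when $(\Psi^*\A)^*\det_\ga$ is expanded multilinearly and wedged with $\pr_U^*\mu_M$, every monomial containing a factor from it produces a repeated base one-form and dies; only the purely vertical monomial survives, namely $\pr_\O^*\big((\theta)^*\det_\ga\big)$. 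Because $\ga$ is $\operatorname{Ad}$-invariant, $\det_\ga$ is $\operatorname{Ad}$-invariant and $(\theta)^*\det_\ga$ is the left-invariant, hence bi-invariant, Riemannian volume form $\omega_\O$ of $\O$. This yields the asserted factorization.

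Finally I would invoke Fubini. As $f$ is $\O$-invariant, $(f\circ\Psi)(x,g)=f(s(x))=\tilde f(x)$ is independent of $g$, so
\[
\int_{\pi^{-1}(U)}f\,\mu_P=\int_{U\times\O}\tilde f(x)\,\pr_U^*\mu_M\wedge\pr_\O^*\omega_\O=\Big(\int_\O\omega_\O\Big)\int_U\tilde f\,\mu_M=\operatorname{Vol}(\O)\int_U\tilde f\,\mu_M .
\]
Summing over the partition of unity gives the global identity; compactness of $\O$ ensures $\operatorname{Vol}(\O)<\infty$ and the compact support of $f$ makes every integral finite. I expect the factorization $\Psi^*\mu_P=\pr_U^*\mu_M\wedge\pr_\O^*\omega_\O$ to be the only real obstacle, specifically checking that the $\operatorname{Ad}_{g^{-1}}$-twisted base part of $\Psi^*\A$ drops out of the top-degree wedge and that what remains on each fiber is genuinely the bi-invariant volume of $\O$; both reduce to the connection axioms and the $\operatorname{Ad}$-invariance of $\ga$.
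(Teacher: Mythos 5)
Your proof is correct and follows essentially the same route as the paper: reduce to a trivialized bundle, observe that in the trivialization $\Psi^*\mathcal{A}$ splits into an $\operatorname{Ad}_{g^{-1}}$-twisted base part plus the Maurer--Cartan form, kill the base part in the top-degree wedge with $\pi^*\mu_M$ and use the $\operatorname{Ad}$-invariance of $\gamma$ to identify the surviving fiber factor with the bi-invariant volume $\mu_\O$, then apply Fubini. The only difference is that you spell out the partition-of-unity localization that the paper compresses into the single remark that it suffices to check the formula for a trivial bundle.
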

\begin{proof} It is enough to check the formula for a trivial bundle $P=M\x\O$.
In this case the connection 1-form is defined with a 1-form $A\in\Om^1(M,\mathfrak{o})$ by
$\mathcal{A}(v_x,\xi_g)=\Ad_{g^{-1}}(A(v_x)+\ka^l(\xi_g))$, where $\ka^l(\xi_g)=\xi_gg^{-1}\in\Om^1(\O,\mathfrak{o})$ denotes the left Maurer-Cartan form on the Lie group $\O$. Note that $\mu_{ \mathcal{O} }:=(\ka^l)^*{\rm det}_\ga\in\Om^k(\O)$ is the volume form induced by the bi-invariant
Riemannian metric on $\O$ associated to $\ga$. Let $\pi_2:M\x\O\to\O$ denote the projection on the second factor. Taking into account the $\O$-invariance of the inner product $\ga$ we obtain
\[
\mu_P=\pi^*\mu_M\wedge\mathcal{A}^*{\rm det} _\ga
=\pi^*\mu_M\wedge\pi_2^*\left((\ka^l)^*{\rm det}_\ga\right)
=\pi^*\mu_M\wedge\pi_2^*\mu_\O.
\]
Now, formula \eqref{integra} follows for every compactly supported smooth function $\tilde f$ on $M$.
\end{proof}

\subsection{Dynamics on a trivial principal bundle}\label{colorado}

If the principal bundle is trivial, then we have the group isomorphism $\mathcal{A}ut_{\vol}(P)\simeq\operatorname{Diff}_{\vol}(M)\,\circledS\,\mathcal{F}(M,\mathcal{O})$.
We now give the expression of the Euler-Poincar\'e equations on $\mathcal{A}ut_{\vol}(P)$ in the case of a trivial bundle. Since a volume form $\mu_M$ has been fixed, it is not necessary to include densities in the dual Lie algebras. The regular dual to $\mathfrak{X}_{\vol}(M)\,\circledS\,\mathcal{F}(M,\mathfrak{o})$ is identified with $\left(\Omega^1(M)/\mathbf{d}\mathcal{F}(M)\right)\times\mathcal{F}(M,\mathfrak{o}^*)$ via the $L^2$-pairing given by the volume form $\mu_M$. Here $\Omega^1(M)/\mathbf{d}\mathcal{F}(M)$ denotes the space of one-forms modulo exact one-forms. As in Proposition \ref{EPaut_trivial}, but this time considering a Lagrangian $l$ on the semidirect product $\X_{\vol}(M)\,\circledS\,\F(M,\mathfrak{o})$, we get the following

\begin{proposition}[The $\operatorname{EP}\!\mathcal{A}ut_{\vol}$ equations on a trivial principal bundle] The Eu\-ler-Poincar\'e equations on the group of volume preserving automorphisms of a trivial principal bundle are written as
\begin{equation}\label{EPAut_vol_trivial}
\left\{\begin{array}{l}
\vspace{0.2cm}\displaystyle\frac{\partial}{\partial t}\frac{\delta l}{\delta\mathbf{u}}+\pounds_{\mathbf{u}}\frac{\delta l}{\delta\mathbf{u}}+\frac{\delta l}{\delta{\nu}}\!\cdot\!\mathbf{d}{\nu}=-\mathbf{d}p,\quad\operatorname{div}(\mathbf{u})=0\\
\displaystyle\frac{\partial}{\partial t}\frac{\delta l}{\delta{\nu}}+\mathbf{d}\frac{\delta l}{\delta{\nu}}\!\cdot\!\mathbf{u}+\operatorname{ad}^*_{{\nu}}\frac{\delta l}{\delta{\nu}}=0,
\end{array}\right.
\end{equation}
where $\pounds_\mathbf{u}$ denotes the Lie-derivative of a one-form (and not a one-form density), and the pressure function $p$ is determined by the incompressibility condition.
\end{proposition}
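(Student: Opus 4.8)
The plan is to obtain \eqref{EPAut_vol_trivial} by specializing the abstract right-invariant Euler-Poincar\'e equation $\partial_t(\delta l/\delta\xi)+\operatorname{ad}^*_\xi(\delta l/\delta\xi)=0$ to the subalgebra $\mathfrak{X}_{\vol}(M)\,\circledS\,\mathcal{F}(M,\mathfrak{o})$ of $\mathfrak{X}(M)\,\circledS\,\mathcal{F}(M,\mathfrak{o})$, following the same route as Proposition \ref{EPaut_trivial} but with the first factor restricted to divergence-free fields and its dual replaced by $\Omega^1(M)/\mathbf{d}\mathcal{F}(M)$. First I would check that the semidirect-product bracket closes on this subalgebra: for $\mathbf{u},\mathbf{w}\in\mathfrak{X}_{\vol}(M)$ the field $\pounds_\mathbf{u}\mathbf{w}=-[\mathbf{u},\mathbf{w}]$ is again divergence-free, and the $\mathcal{F}(M,\mathfrak{o})$-component of the bracket is untouched. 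Since a volume form $\mu_M$ is now fixed, densities are identified with functions, and the coadjoint operator on the subalgebra is the full one of Proposition \ref{EPaut_trivial} followed by the natural projection dual to the inclusion $\mathfrak{X}_{\vol}(M)\hookrightarrow\mathfrak{X}(M)$, i.e.\ the quotient map onto $\Omega^1(M)/\mathbf{d}\mathcal{F}(M)$.

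For the $\mathcal{F}(M,\mathfrak{o}^*)$-component the second line of \eqref{EPAut_trivial} restricts directly. Identifying $\mathfrak{o}^*$-valued densities with functions via $\mu_M$ and using $\pounds_\mathbf{u}(c\otimes\mu_M)=(\mathbf{d}c\cdot\mathbf{u})\otimes\mu_M+c\,\operatorname{div}(\mathbf{u})\otimes\mu_M$, the condition $\operatorname{div}(\mathbf{u})=0$ collapses the density Lie derivative to the scalar advection $\mathbf{d}(\delta l/\delta\nu)\cdot\mathbf{u}$. Together with the unchanged $\mathfrak{o}$-coadjoint term this reproduces the second equation of \eqref{EPAut_vol_trivial}.

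The first component is where the pressure enters. The computation of Proposition \ref{EPaut_trivial} yields the one-form density $\pounds_\mathbf{u}(\delta l/\delta\mathbf{u})+(\delta l/\delta\nu)\cdot\mathbf{d}\nu$; since $\pounds_\mathbf{u}\mu_M=\operatorname{div}(\mathbf{u})\mu_M=0$, the density Lie derivative reduces to the ordinary Lie derivative acting on the one-form $\delta l/\delta\mathbf{u}$, which is exactly the meaning attached to $\pounds_\mathbf{u}$ in the statement. On the volume-preserving subalgebra this expression lives only in the quotient $\Omega^1(M)/\mathbf{d}\mathcal{F}(M)$, i.e.\ it is defined modulo exact one-forms. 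Writing the Euler-Poincar\'e equation in the quotient and lifting $\delta l/\delta\mathbf{u}$ to a chosen representative in $\Omega^1(M)$, the equality then holds only up to an exact one-form; denoting that one-form by $-\mathbf{d}p$ gives the first line of \eqref{EPAut_vol_trivial}. Equivalently, $p$ is the Lagrange multiplier enforcing $\operatorname{div}(\mathbf{u})=0$ in the reduced variational principle.

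The step I expect to demand the most care is the claim that $p$ is genuinely \emph{determined} by incompressibility (and is unique up to an additive constant), rather than a free gauge. Here I would invoke the Hodge decomposition of $\Omega^1(M)$ into a co-closed part and $\mathbf{d}\mathcal{F}(M)$: imposing that $\partial_t\mathbf{u}$ remain in $\mathfrak{X}_{\vol}(M)$ and applying the codifferential (equivalently, the divergence after raising an index with $g$) to the first equation produces an elliptic Poisson-type equation $\Delta p=\operatorname{div}(\cdots)$, whose solvability on the orientable Riemannian manifold $M$ fixes $p$ up to a constant. This analytic solvability is the only non-algebraic ingredient; everything else is a direct restriction of the coadjoint computation already carried out for Proposition \ref{EPaut_trivial}.
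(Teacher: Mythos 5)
Your proposal is correct and follows essentially the same route the paper takes: Proposition \ref{EPaut_trivial} is repeated for a Lagrangian on $\mathfrak{X}_{\vol}(M)\,\circledS\,\mathcal{F}(M,\mathfrak{o})$, with the regular dual $\left(\Omega^1(M)/\mathbf{d}\mathcal{F}(M)\right)\times\mathcal{F}(M,\mathfrak{o}^*)$ identified via the fixed $\mu_M$, the density Lie derivatives collapsing because $\operatorname{div}\mathbf{u}=0$, and the pressure appearing as the exact one-form ambiguity when the first equation is lifted from the quotient to $\Omega^1(M)$. Your additional verifications (closure of the bracket, $\operatorname{ad}^*$ as the full coadjoint operator followed by the quotient projection, and the Hodge-theoretic determination of $p$) are exactly the details the paper leaves implicit.
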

Here, the first equation is written on $\Om^1(M)$ and not on the quotient space $\Om^1(M)/d\F(M)$.
As in the compressible case, one can use Kaluza-Klein Lagrangians of the form \eqref{KK_Lagrangian_trivial}, relative to differential operators $Q_1:\mathfrak{X}(M)\rightarrow\mathfrak{X}(M)$ and $Q_2:\mathcal{F}(M,\mathfrak{o})\rightarrow\mathcal{F}(M,\mathfrak{o}^*) $, that is,
\begin{equation}\label{KK_Lagrangian_incomp}
l(\mathbf{u},{\nu})=\frac{1}{2}\int_Mg(\mathbf{u},Q_1\mathbf{u})\mu_M+\frac{1}{2}\int_M\left\langle A\cdot\mathbf{u}+{\nu} ,Q_2(A\cdot\mathbf{u}+{\nu}) \right\rangle\mu_M,
\end{equation}
where $\sf g$ is the given Riemannian metric on $M$ and $\mu_M$ is the Riemannian volume.

\paragraph{Other duality pairings.} Since a Riemannian metric $\sf g$
is given, we can alternatively use the $L^2$-paring induced by $\sf g$.
In this case, we can choose the dual space
$\mathfrak{X}_{\vol}(M)\times \mathcal{F}(M,\mathfrak{o}^*)$ and the
$\operatorname{EP}\!\mathcal{A}ut$ equations read
\begin{equation}
\left\{\begin{array}{l}
\vspace{0.2cm}\displaystyle\frac{\partial}{\partial t}\frac{\delta l}{\delta\mathbf{u}} +
\nabla_\mathbf{u}\frac{\delta l}{\delta\mathbf{u}} +
\nabla\mathbf{u}^\mathsf{T}\cdot\frac{\delta l}{\delta\mathbf{u}} +
\left(\frac{\delta
l}{\delta{\nu}}\cdot\mathbf{d}{\nu}\right)^\sharp
= -\operatorname{grad}p,
\quad
\operatorname{div}(\mathbf{u})=0\\
\displaystyle\frac{\partial}{\partial t}\frac{\delta l}{\delta{\nu}}+\mathbf{d}\frac{\delta l}{\delta{\nu}}\!\cdot\!\mathbf{u}+\operatorname{ad}^*_{{\nu}}\frac{\delta l}{\delta{\nu}}=0,
\end{array}\right.
\end{equation}
where $\nabla$ denotes the Levi-Civita connection, $\sharp$ is the
sharp-operator associated to the Riemannian metric and
$\operatorname{grad}p:=(\mathbf{d}p)^\sharp$.

Alternatively, if $H^1(M)=\{0\}$, one can also identify the dual space to $\mathfrak{X}_{\vol}(M)$ with exact two-forms $\mathbf{d}\Omega^1(M)$, through the duality pairing
\begin{equation}\label{domeg}
\left\langle\omega,\mathbf{u}\right\rangle=\int_M(\alpha\cdot\mathbf{u})\mu_M,\quad\text{where}\quad\mathbf{d}\alpha=\omega.
\end{equation}
In this case, we get the first equation in vorticity representation as
\begin{equation}\label{EPAut_vol_vorticity}
\left\{\begin{array}{l}
\vspace{0.2cm}\displaystyle\frac{\partial}{\partial t}\frac{\delta l}{\delta\mathbf{u}}+\pounds_{\mathbf{u}}\frac{\delta l}{\delta\mathbf{u}}+\mathbf{d}\frac{\delta l}{\delta{\nu}}\wedge\mathbf{d}{\nu}=0\\
\displaystyle\frac{\partial}{\partial t}\frac{\delta l}{\delta{\nu}}+\mathbf{d}\frac{\delta l}{\delta{\nu}}\!\cdot\!\mathbf{u}+\operatorname{ad}^*_{{\nu}}\frac{\delta l}{\delta{\nu}}=0,
\end{array}\right.
\end{equation}
where the wedge product involves a contraction between
$\mathfrak{o}$ and $\mathfrak{o}^*$-valued forms. It is important to
recall that the functional derivatives appearing above depend on the
chosen pairing. For example, in \eqref{EPAut_vol_vorticity} and with the Lagrangian \eqref{KK_Lagrangian_incomp} we have
\[
\omega:=\frac{\delta l}{\delta\mathbf{u}}=\mathbf{d}\left((Q_1\mathbf{u})^\flat+\sigma \cdot A\right) \in\mathbf{d}\Omega^1(M),\quad \sigma  :=\frac{\delta l}{\delta{\nu}}=Q_2(A\cdot\mathbf{u}+{\nu})\in \mathcal{F}(M,\mathfrak{o}^*).
\]

An interesting special case of the above equations comes out
when $M$ is the Euclidean space $\mathbb{R}^3$. In this case, the exact two-form $\omega=\mathbf{d}\alpha$ can be identified with the divergence free vector field ${\boldsymbol{\omega}}=\operatorname{curl}\boldsymbol{\alpha}$, via the relations $\boldsymbol{\omega}=(\star \omega)^\sharp$ and $\boldsymbol{\alpha}=\alpha^\sharp$, where $\star$ denotes the Hodge star operator associated to the Euclidean metric. Using this identification, the $\operatorname{EP}\!\mathcal{A}ut$ equations \eqref{EPAut_vol_vorticity} can be written as
\begin{equation}\label{EPAut_vol_vorticity_3D}
\left\{\begin{array}{l} \vspace{0.2cm}\displaystyle\frac{\partial
{\boldsymbol{\omega}}}{\partial t} +
\left(\mathbf{u}\cdot\nabla\right){\boldsymbol{\omega}} -
\left({\boldsymbol{\omega}}\cdot\nabla\right)\mathbf{u}+
\operatorname{curl}\left\langle\boldsymbol{\mu}\,,\nabla{\nu}\right\rangle=0
\\
\displaystyle\frac{\partial\sigma  }{\partial
t}+\left(\mathbf{u}\cdot\nabla\right)\sigma  +\operatorname{ad}^*_{{\nu  }}\sigma  =0.
\end{array}\right.
\end{equation}
The first equation can be obtained from \eqref{EPAut_vol_vorticity} by using the equalities
\[
\left(\star(\pounds_\mathbf{u}\boldsymbol{\omega})\right)^\sharp=\left(\star \mathbf{d}(\mathbf{i}_\mathbf{u}\boldsymbol{\omega})\right)^\sharp=\left(\star\mathbf{d}({\boldsymbol{\omega}}\times \mathbf{u})^\flat\right)^\sharp=\operatorname{curl}({\boldsymbol{\omega}}\times \mathbf{u})=\left(\mathbf{u}\cdot\nabla\right){\boldsymbol{\omega}} -
\left({\boldsymbol{\omega}}\cdot\nabla\right)\mathbf{u}.
\]
In this case, the duality pairing is given by
\[
\left\langle{\boldsymbol{\omega}},\mathbf{u}\right\rangle=\int_{\mathbb{R}^3}(\mathbf{v}\cdot\mathbf{u}){\rm d}x,\quad\text{where}\quad \operatorname{curl}\mathbf{v}={\boldsymbol{\omega}}.
\]
For example, taking the Kaluza-Klein Lagrangian \eqref{KK_Lagrangian_incomp}, the corresponding $\operatorname{EP}\!\mathcal{A}ut$ equations are obtained by inserting the functional derivatives
\[
{\boldsymbol{\omega}}=\frac{\delta l}{\delta\mathbf{u}}=\operatorname{curl}(Q_1\mathbf{u})+\operatorname{curl}\left(\mu \cdot A\right)\quad\text{and}\quad\sigma  =\frac{\delta l}{\delta{\nu}}=Q_2(A\cdot\mathbf{u}+{\nu})
\]
into \eqref{EPAut_vol_vorticity_3D}. The two dimensional case is presented in Appendix \ref{2d}.

\begin{remark}{\rm 
The above equations generalize an important regularization model in fluid dynamics, which is known under the name of Euler-$\alpha$
(\cite{HoMaRa98}). This model is derived by specializing the above equations to the particular case when $Q_1=1-\alpha^2\Delta$ and
${\nu}\equiv 0$, the latter condition being preserved by
the flow. Thus, the complete system of the EP$\Aut_{\rm vol}$
equations generalizes the Euler-$\alpha$ equation to the case of a
Yang-Mills charged fluid moving under the influence of an external
magnetic field. 
\rem{ 
For the special case when $\mathcal{O}=S^1$, this
system is of relevance in plasma physics. In this case, the density
$\sigma  =\rho$ is the electric charge density. This is a typical
situation, for example, in the case of non-neutral plasmas.
}    
}
\end{remark}

\subsection{Dynamics on a non trivial principal bundle}

By \eqref{akau} the regular dual of $\mathfrak{aut}_{\vol}(P)$ can be identified with the cartesian product $\left(\Om^1(M)/\mathbf{d}\mathcal{F}(M)\right)\times \Ga(\Ad^*P)$, via the $L^2$ pairing
\[
\langle([\be],{\zeta}),(\mathbf{u},\tilde\om)\rangle
=\int_M(\be(\mathbf{u})+{\zeta}\cdot\tilde\om)\mu_M.
\]
Given a Lagrangian $l:\mathfrak{aut}_{\vol}(P)\rightarrow\mathbb{R}$ and a principal connection $\mathcal{A}$, the associated EP$\mathcal{A}ut_{\vol}$ equations are given in the following

\begin{proposition}[The general $\operatorname{EP}\mathcal{A}ut_{\vol}$ equations on principal bundles] The Eu\-ler-Poincar\'e equations on the group of volume preserving automorphisms of a principal bundle are written as
\begin{equation}\label{EPAut_vol_Adjoint}
\left\{\begin{array}{l}
\vspace{0.2cm}\displaystyle\frac{\partial}{\partial t}\frac{\delta l^\mathcal{A}}{\delta\mathbf{u}}+\pounds_\mathbf{u}\frac{\delta l^\mathcal{A}}{\delta\mathbf{u}}+\frac{\delta l^\mathcal{A}}{\delta\tilde\omega}\!\cdot\!\left(\nabla^\mathcal{A}\tilde{\omega}+\mathbf{i}_\mathbf{u}\tilde{\mathcal{B}}\right)=-\mathbf{d}p,\quad\operatorname{div}\mathbf{u}=0\\
\displaystyle\frac{\partial}{\partial t}\frac{\delta l^\mathcal{A}}{\delta\tilde\omega}+\nabla_{\mathbf{u}}^\mathcal{A}\frac{\delta l^\mathcal{A}}{\delta\tilde{\omega}}+\operatorname{ad}^*_{\tilde\omega}\frac{\delta l^\mathcal{A}}{\delta\tilde{\omega}}=0,
\end{array}\right.
\end{equation}
where $\pounds_u$ denotes the Lie derivative acting one one-forms and the pressure $p$ is determined by the incompressibility condition.
\end{proposition}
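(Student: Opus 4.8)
The plan is to obtain \eqref{EPAut_vol_Adjoint} from the compressible equations \eqref{EPAut_Adjoint} of Theorem \ref{altern_form} by incorporating the incompressibility constraint through Euler-Poincar\'e reduction on the subalgebra $\mathfrak{aut}_{\vol}(P)$. Recall from \eqref{akau} that, for a fixed principal connection $\mathcal{A}$, this subalgebra is isomorphic to $\X_{\vol}(M)\times\Ga(\Ad P)$, and that its regular dual is identified with $\big(\Om^1(M)/\mathbf{d}\F(M)\big)\times\Ga(\Ad^*P)$ via the $L^2$-pairing associated to the fixed volume form $\mu_M$. Since $\mathfrak{aut}_{\vol}(P)$ is a Lie subalgebra of $\mathfrak{aut}(P)$ with the induced bracket, the reduced Hamilton principle yields the same abstract Euler-Poincar\'e equation $\partial_t(\delta l^\mathcal{A}/\delta\xi)+\ad^*_\xi(\delta l^\mathcal{A}/\delta\xi)=0$, now paired against $\mathfrak{aut}_{\vol}(P)$; the only change with respect to the compressible case lies in the dual space used and hence in the projection built into $\ad^*$.

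First I would treat the second equation, which carries no constraint. In the compressible formulation the advection of $\delta l^\mathcal{A}/\delta\tilde\omega=\zeta\otimes\mu_M$ is governed by the covariant Lie derivative $\pounds^\mathcal{A}_\mathbf{u}$ on $\Ad^*P$-valued densities. By the explicit formula in Lemma \ref{lemma_split_adjoint}, $\pounds^\mathcal{A}_\mathbf{u}(\zeta\otimes\mu_M)=\nabla^\mathcal{A}_\mathbf{u}\zeta\otimes\mu_M+\zeta\,\div(\mathbf{u})\otimes\mu_M$, so the constraint $\div\mathbf{u}=0$ kills the second summand and reduces $\pounds^\mathcal{A}_\mathbf{u}$ to the covariant derivative $\nabla^\mathcal{A}_\mathbf{u}$, producing exactly the second line of \eqref{EPAut_vol_Adjoint}. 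The same mechanism disposes of the density factor in the first equation: writing $\delta l^\mathcal{A}/\delta\mathbf{u}$ as a one-form $\al$ tensored with $\mu_M$, the identity $\pounds_\mathbf{u}(\al\otimes\mu_M)=(\pounds_\mathbf{u}\al)\otimes\mu_M+\al\otimes\pounds_\mathbf{u}\mu_M$ together with $\pounds_\mathbf{u}\mu_M=(\div\mathbf{u})\mu_M=0$ turns the Lie derivative on one-form densities into the Lie derivative on one-forms, as asserted.

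Next I would account for the pressure. Because the first equation lives in the quotient $\Om^1(M)/\mathbf{d}\F(M)$, lifting it to a genuine equation on $\Om^1(M)$ fixes the representative only up to an exact one-form; denoting this ambiguity by $-\mathbf{d}p$ yields the first line of \eqref{EPAut_vol_Adjoint}. The function $p$ is then pinned down by demanding that the flow preserve $\div\mathbf{u}=0$: applying the appropriate projection onto divergence-free fields (equivalently, taking the codifferential after the sharp isomorphism) to the first equation produces an elliptic equation of Poisson type for $p$, solvable uniquely up to constants under the chosen boundary conditions. I expect this last step---the role of the pressure as a Lagrange multiplier enforcing incompressibility and the solvability of the associated elliptic problem---to be the main obstacle, since it is the only genuinely analytic ingredient; the algebraic reductions of the two advection operators are routine once the correct dual pairing is fixed. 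The result is entirely parallel to the trivial-bundle computation \eqref{EPAut_vol_trivial} and to the classical incompressible Euler case.
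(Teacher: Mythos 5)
Your proposal is correct and takes essentially the same route as the paper's own proof: both deduce \eqref{EPAut_vol_Adjoint} from the compressible equations \eqref{EPAut_Adjoint} by imposing $\operatorname{div}\mathbf{u}=0$ and using the fixed volume $\mu_M$ to identify one-form densities with one-forms and $\Gamma(\operatorname{Ad}^*P)\otimes\operatorname{Den}(M)$ with $\Gamma(\operatorname{Ad}^*P)$, so that $\pounds^{\mathcal{A}}_{\mathbf{u}}$ collapses to $\nabla^{\mathcal{A}}_{\mathbf{u}}$ and the pressure $-\mathbf{d}p$ appears as the exact-form ambiguity in lifting from the quotient dual $\Omega^1(M)/\mathbf{d}\mathcal{F}(M)$. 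Your explicit elliptic determination of $p$ merely spells out what the paper leaves implicit, and the paper's additional direct computation of $\operatorname{ad}^*$ on the regular dual is exactly the ``projection built into $\operatorname{ad}^*$'' you gesture at, so nothing is missing.
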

\begin{proof} The equations can be obtained as in \S\ref{General_EPAut} by splitting the general formulation
\[
\frac{\partial}{\partial t}\frac{\delta l}{\delta U}+\pounds_U\frac{\delta l}{\delta U}=0,
\]
and using the connection dependent isomorphism. Then we deduce \eqref{EPAut_vol_Adjoint} from \eqref{EPAut_Adjoint} by simply adding the incompressibility condition and using the fixed Riemannian volume $\mu_M$ to identify one-form densities with one-forms, and sections in $\Gamma(\operatorname{Ad}^*P)\otimes\operatorname{Den}(M)$ with sections in $\Gamma(\operatorname{Ad}^*P)$.
Then one obtains the second equation since $\pounds_u^\A$ and $\nabla_u^\A$ coincide when applied to functions.

Alternatively, it is instructive to derive the equations directly by using the expression of the infinitesimal coadjoint action $\operatorname{ad}^*$ on $\left(\Om^1(M)/\mathbf{d}\mathcal{F}(M)\right)\times \Ga(\Ad^*P)$. In order to do this, we first need the expression of the Lie bracket on $\mathfrak{X}_{\vol}(M)\times\Gamma(\Ad P)$.
Under the isomorphism \eqref{isomorphism_A}, the Lie bracket is
\begin{equation}\label{deco}
\left[(\mathbf{u},\tilde\om),(\mathbf{v},\tilde\th)\right]_L=\left([\mathbf{u},\mathbf{v}]_L,
[\tilde\om,\tilde\th]+\nabla^\mathcal{A}_\mathbf{v}\tilde\om-\nabla^\mathcal{A}_\mathbf{u}\tilde\th+\tilde{\mathcal{B}}(\mathbf{u},\mathbf{v})\right),
\end{equation}
where $\tilde{\mathcal{B}}\in\Omega^2(M,\operatorname{Ad}P)$ is the reduced curvature. Then it suffices to compute the associated infinitesimal coadjoint action. This is done in the following lemma.
\end{proof}

\begin{lemma}
The infinitesimal coadjoint action of $\mathfrak{aut}_{\vol}(P)$ on its regular dual, using the decomposition provided by a principal connection $\mathcal{A}$, can be written as
\begin{equation}
\ad^*_{(\mathbf{u},\tilde\om)}([\be],\al)
=\left([\pounds_\mathbf{u}\be+\al\cdot(\nabla^\mathcal{A}\tilde\om+i_\mathbf{u}\tilde{\mathcal{B}})], \ad_{\tilde\om}^*\al+\nabla^\mathcal{A}_\mathbf{u} \al\right),
\end{equation}
for $(\mathbf{u},\tilde\om)\in \mathfrak{X}_{\vol}(M) \times\Gamma(\operatorname{Ad}P)$
and $([\be],\al)\in \left(\Om^1(M)/\mathbf{d}\mathcal{F}(M)\right)\times \Gamma(\operatorname{Ad}^*P)$.
\end{lemma}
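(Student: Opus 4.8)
The plan is to compute $\ad^*$ directly from its defining relation, testing against an arbitrary $(\mathbf{v},\tilde\th)\in\mathfrak{X}_{\vol}(M)\times\Gamma(\Ad P)$:
\[
\langle\ad^*_{(\mathbf{u},\tilde\om)}([\be],\al),(\mathbf{v},\tilde\th)\rangle=\langle([\be],\al),[(\mathbf{u},\tilde\om),(\mathbf{v},\tilde\th)]_L\rangle .
\]
First I would substitute the bracket \eqref{deco} together with the $L^2$ pairing, expanding the right-hand side into the four contributions coming from $[\mathbf{u},\mathbf{v}]_L$, from $[\tilde\om,\tilde\th]$, from $\nabla^\mathcal{A}_\mathbf{v}\tilde\om-\nabla^\mathcal{A}_\mathbf{u}\tilde\th$, and from $\tilde{\mathcal{B}}(\mathbf{u},\mathbf{v})$. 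The strategy is then to rewrite each contribution as an $L^2$ pairing of a fixed object against either $\mathbf{v}$ or $\tilde\th$, and to read off the two components of $\ad^*$ by collecting the coefficients of $\mathbf{v}$ and of $\tilde\th$ separately.

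Three of the contributions require no integration by parts. The curvature term gives $\al\cdot\tilde{\mathcal{B}}(\mathbf{u},\mathbf{v})=(\al\cdot\mathbf{i}_\mathbf{u}\tilde{\mathcal{B}})(\mathbf{v})$, and the first covariant term gives $\al\cdot\nabla^\mathcal{A}_\mathbf{v}\tilde\om=(\al\cdot\nabla^\mathcal{A}\tilde\om)(\mathbf{v})$; both feed directly into the first ($\mathbf{v}$) component, producing the summand $\al\cdot(\nabla^\mathcal{A}\tilde\om+\mathbf{i}_\mathbf{u}\tilde{\mathcal{B}})$. The algebraic bracket term gives $\al\cdot[\tilde\om,\tilde\th]=(\ad^*_{\tilde\om}\al)\cdot\tilde\th$ by the fiberwise definition of the coadjoint action on $\mathfrak{o}^*$, contributing $\ad^*_{\tilde\om}\al$ to the second ($\tilde\th$) component.

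The two remaining contributions are handled by integration by parts, and this is where the incompressibility hypothesis $\div\mathbf{u}=0$ enters decisively. For the vector-field bracket I would use the identity $\pounds_\mathbf{u}(\be(\mathbf{v}))=(\pounds_\mathbf{u}\be)(\mathbf{v})+\be(\pounds_\mathbf{u}\mathbf{v})$ together with $\int_M\pounds_\mathbf{u}f\,\mu_M=\int_M\mathbf{d}\,\mathbf{i}_\mathbf{u}(f\mu_M)=0$ for $\div\mathbf{u}=0$, which converts $\int_M\be([\mathbf{u},\mathbf{v}]_L)\mu_M$ into $\int_M(\pounds_\mathbf{u}\be)(\mathbf{v})\mu_M$ (the sign being fixed by the right Lie-algebra convention $[\mathbf{u},\mathbf{v}]_L=-\pounds_\mathbf{u}\mathbf{v}$); here $\pounds_\mathbf{u}$ acts on one-forms rather than one-form densities, since $\mu_M$ is fixed. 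For the term $-\al\cdot\nabla^\mathcal{A}_\mathbf{u}\tilde\th$ I would apply the compatibility of $\nabla^\mathcal{A}$ with the $\Ad^*P$--$\Ad P$ pairing, $\pounds_\mathbf{u}(\al\cdot\tilde\th)=(\nabla^\mathcal{A}_\mathbf{u}\al)\cdot\tilde\th+\al\cdot\nabla^\mathcal{A}_\mathbf{u}\tilde\th$, and again discard the total-derivative integral using $\div\mathbf{u}=0$; this moves the derivative onto $\al$ and yields $\int_M(\nabla^\mathcal{A}_\mathbf{u}\al)\cdot\tilde\th\,\mu_M$, the remaining summand of the second component. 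Collecting the four pieces gives exactly the stated formula.

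Finally I would verify consistency with the quotient structure of the dual: the first component lives only in $\Om^1(M)/\mathbf{d}\mathcal{F}(M)$, so one must check that $\pounds_\mathbf{u}\be+\al\cdot(\nabla^\mathcal{A}\tilde\om+\mathbf{i}_\mathbf{u}\tilde{\mathcal{B}})$ depends only on the class $[\be]$; this holds because replacing $\be$ by $\be+\mathbf{d}f$ changes $\pounds_\mathbf{u}\be$ by the exact form $\mathbf{d}(\pounds_\mathbf{u}f)$. I expect the main obstacle to be bookkeeping rather than conceptual: keeping the sign conventions of the left/right Lie bracket consistent throughout, ensuring that all total-derivative (boundary) terms genuinely vanish — which requires either a closed $M$ or compactly supported data in addition to $\div\mathbf{u}=0$ — and confirming that the bracket of divergence-free fields is again divergence-free, so that the pairing against $[\be]$ is well posed on the quotient.
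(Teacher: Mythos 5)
Your proposal is correct and follows essentially the same route as the paper's proof: pairing $\ad^*_{(\mathbf{u},\tilde\om)}([\be],\al)$ against an arbitrary $(\mathbf{v},\tilde\th)$, substituting the decomposed bracket \eqref{deco}, and integrating by parts using $\operatorname{div}\mathbf{u}=0$ to move $\pounds_\mathbf{u}$ onto $\be$ and $\nabla^\mathcal{A}_\mathbf{u}$ onto $\al$, then collecting the coefficients of $\mathbf{v}$ and $\tilde\th$. Your additional verification that the first component is well defined on the quotient $\Om^1(M)/\mathbf{d}\mathcal{F}(M)$ is a sound consistency check that the paper leaves implicit.
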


\begin{proof}
Using the expression \eqref{deco} of the bracket on $\mathfrak{aut}_{\vol}(P)$ under the decomposition \eqref{akau}, we compute
\begin{align*}
\langle \ad^*_{(\mathbf{u},\tilde\om)}([\be],\al),(\mathbf{v},\tilde\th)\rangle
=&\left\langle[\be],[\mathbf{u},\mathbf{v}]_L\right\rangle
+\left\langle\al,[\tilde\om,\tilde\th]+\nabla^\mathcal{A}_\mathbf{v}\tilde\om-\nabla^\mathcal{A}_\mathbf{u}\tilde\th+\tilde{\mathcal{B}}(\mathbf{u},\mathbf{v})\right\rangle\\
=&-\int_M\be([\mathbf{u},\mathbf{v}])\mu_M
+\int_M\al\cdot(\ad_{\tilde\om}\tilde\th)\mu_M
+\int_M\left(\al\cdot \nabla^\mathcal{A}_\mathbf{v}\tilde\om\right)\mu_M\\
&-\int_M\al\cdot \left(\nabla^\mathcal{A}_{\mathbf{u}}\tilde\th\right)\mu_M
+\int_M\left(\al\cdot i_\mathbf{u}\tilde{\mathcal{B}}\right)(\mathbf{v})\mu_M\\
=&\int_M\pounds_\mathbf{u}\be(\mathbf{v})
\mu_M
+\left\langle\al\cdot\left(\nabla^\mathcal{A}\tilde\om+\mathbf{i}_\mathbf{u}\tilde{\mathcal{B}}\right),\mathbf{v}\right\rangle\\
&+\int_M\left(\ad^*_{\tilde\om}\al+\nabla^\mathcal{A}_\mathbf{u}\al\right)\cdot\tilde\th\mu_M
\\
=&\left\langle\left([\pounds_\mathbf{u}\be+\al\cdot (\nabla^\mathcal{A}\tilde\om+\mathbf{i}_\mathbf{u}\tilde{\mathcal{B}}) ], \ad_{\tilde\om}^*\al+ \nabla^\mathcal{A}_\mathbf{u}\al\right),(\mathbf{v},\tilde\th)\right\rangle,
\end{align*}
where we use that $\div\mathbf{u}=0$.
\end{proof}

\medskip

\begin{remark}[Kaluza-Klein Lagrangians]
{\rm As in the compressible
case, an important class of Lagrangians is provided by the
Kaluza-Klein expression \eqref{KK_Lagrangian}, thus obtaining the incompressible version of \eqref{kkl}. 
In the case of the $L^2$ Kaluza-Klein Lagrangian, 
the EP$\mathcal{A}ut_{\vol}$ equation takes the simple form
\[
\left\{\begin{array}{l}
\vspace{0.2cm}\displaystyle\partial_t\mathbf{u}+\nabla_\mathbf{u}\mathbf{u}+\bar\ga\left(\tilde\om,\mathbf{i}_\mathbf{u}\tilde{\mathcal{B}}\right)^\sharp=-\operatorname{grad}p,\quad\operatorname{div}\mathbf{u}=0\\
\displaystyle\partial_t\tilde\omega+\nabla_\mathbf{u}^\mathcal{A}\tilde\omega=0.
\end{array}\right.
\]
These equations describe the motion of an ideal fluid moving in an external Yang-Mills field $\mathcal{B}$, where $\tilde\omega$ denotes
the magnetic charge (\cite{Vi08}).
For the more general case with generic metrics
$Q_1$ and $Q_2$, the  EP$\mathcal{A}ut_{\vol}$ equation generalizes the
Euler-$\alpha$ model (\cite{HoMaRa98}) to the case of a Yang-Mills
charged fluid, whose configuration manifold is a non-trivial
principal bundle. Again, the fluid moves under the influence of an
external Yang-Mills magnetic field, which is given by the curvature
$\tilde{\mathcal{B}}$ as usual.
}
\end{remark}


\section{Clebsch variables and momentum maps}\label{MomapsForEPAut_vol}

This section presents the dual pair of momentum maps underlying   EP$\mathcal{A}ut_{\bf vol\,}$ flows. This extends the dual pair structure underlying ideal incompressible fluid flows, as it was shown by  \cite{MaWe83}. The next section reviews briefly the Marsden-Weinstein dual pair, whose details are given in Appendix \ref{App:MWDP}.

\subsection{The Marsden-Weinstein dual pair}
In the paper \cite{MaWe83}, the authors present a pair of momentum maps
 that apply to Euler's equation $\partial_t\boldsymbol\omega+\operatorname{curl}(\boldsymbol\omega\times\mathbf{u})=0$ for the fluid vorticity $\boldsymbol\omega=\operatorname{curl}\mathbf{u}$.  In
 particular, these momentum maps provide the Clebsch representation
 of the fluid vorticity
 $\operatorname{curl}\mathbf{u}$ on the configuration manifold $S$ in terms of
 canonical variables taking values in a symplectic
 manifold $M$. Again, one constructs the dual pair of momentum maps
 
 \begin{picture}(150,100)(-70,0)%
\put(105,75){$\Emb(S,M)$}

\put(90,50){$\mathbf{J}_L$}

\put(160,50){$\mathbf{J}_R$}

\put(70,15){$\mathcal{F}(M)^*$
}

\put(160,15){$\mathfrak{X}_{\vol}(S)^*$
}

\put(130,70){\vector(-1, -1){40}}

\put(135,70){\vector(1,-1){40}}

\end{picture}\\
 whose  right leg provides the Clebsch representation of the vorticity on $S$. A rigorous construction of these momentum maps and the proof of the dual pair property were given in \cite{GBVi2011}.
 
 In particular,   $S$ is a compact manifold with volume form $\mu_S$ while $(M,\omega)$ is an exact (and hence noncompact) symplectic manifold, with $\omega=-\mathbf{d}\theta$. 
The Marsden-Weinstein dual pair (\cite{MaWe83}) is associated to the
action of the groups $\operatorname{Diff}_{\vol}(S)$ and
$\operatorname{Diff}_{\ham}(M)$ on $\operatorname{Emb}(S,M)$ by
composition on the right and on the left, respectively. Here
$\operatorname{Diff}_{\ham}(M)$ is the group of Hamiltonian diffeomorphisms
of $(M,\omega)$. The momentum map associated to the right action reads
\begin{equation}\label{santiago}
\mathbf{J}_R(f)=[f^*\theta]\in\Omega^1(S)/\mathbf{d}\mathcal{F}(S)
\,,
\end{equation}
or 
\[
\mathbf{J}_R(\boldsymbol{Q},\boldsymbol{P})=[(\boldsymbol{Q},\boldsymbol{P})^*\Theta_{\mathbb{R}^3}]=[\boldsymbol{P}\cdot
\mathbf{d}\boldsymbol{Q}]=\operatorname{curl}\!\left(\nabla
\boldsymbol{Q}^T\!\cdot\boldsymbol{P}\right).
\]
when $M=\Bbb{R}^{2k}$ and $S=\Bbb{R}^3$. (Here we have used  the duality pairing arguments in
\S\ref{colorado} to identify an exact two form $\omega$ with a
divergence-free vector field). 
Notice that the  corresponding
 variables also appear in the left leg
\begin{equation}\label{J_L_MW}
\mathbf{J}_L(f)=\int_S\delta(n-f(x))\mu_S\in\mathcal{F}(M)^*,
\end{equation} 
or, in local coordinates,
\[
\mathbf{J}_L(\boldsymbol{Q},\boldsymbol{P})=\int_S\delta(\boldsymbol{q}-\boldsymbol{Q}(s))\delta(\boldsymbol{p}-\boldsymbol{P}(s))\,\mu_S\in\mathcal{F}(\Bbb{R}^{2k})^*.
\]
This is due to the fact that Clebsch variables are conjugate variables
 taking values in $M$.
However, in their work \cite{GBVi2011} proved that the dual pair structure requires replacing the group $\operatorname{Diff}_{\ham}(M)$ by a central extension of the type $\operatorname{Diff}_{\ham}(M)\times_B\mathbb{R}$, where $B$ is a group two cocycle. This central extension of $\operatorname{Diff}_{\ham}(M)$ first appeared in \cite{IsLoMi2006}, who showed how this is isomorphic to 
the quantomorphism group $\mathcal{Q}uant(T^*M\times S^1)$, well known in quantization problems.
 
Also, if the symplectic form on $S$ is not exact, then one needs in addition the Ismagilov extension 
of the group of volume preserving diffeomorphisms of $S$. 
In what follows we consider only exact symplectic manifolds, so there will be no central extension
needed for the right leg of the dual pair.

The purpose of this section is to present the $\O$-equivariant variant of the Marsden-Weinstein dual pair,
which is the dual pair of momentum maps that apply to the EP$\mathcal{A}ut_{\vol}$ equation.

\subsection{Clebsch variables for incompressible EP$\!\mathcal{A}ut$ flows}\label{Clebsch_EPAutvol}

After the preceding review of the momentum maps underlying the
Clebsch representation of Euler's fluid vorticity, we are now ready
to generalize that construction to our principal bundle setting. In
particular, we shall characterize the momentum maps underlying the
Clebsch representation of the EP$\Aut_{\rm vol}$ system.  

We recall
from the Clebsch representation of Euler's vorticity that Clebsch
variables belong to a space of embeddings $S\hookrightarrow M$, where $S$ is the
fluid particle configuration manifold and $M$ is a symplectic
manifold. For simplicity, this section will introduce the
fundamental concepts in the simple case when $S=\mathbb{R}^3$, so that
the fluid moves in the ordinary physical space. In the case of
EP$\Aut_{\rm vol}$, the equations describe the evolution of a
Yang-Mills charged fluid under the influence of an external magnetic
field. Thus, it is natural to identify $M$ with the usual
Yang-Mills phase space $T^*\bar{P}$, where $\bar{P}$ is a principal
$\mathcal{O}$-bundle.

As a further simplification, this section will describe the case of
a trivial bundle $\bar{P}=\mathbb{R}^k\times\mathcal{O}$ with
$\mathcal{O}\subseteq GL(n,\Bbb{R})$ being a matrix Lie group. Thus, we are led to the
following definition on Clebsch variables:
\[
\left(\boldsymbol{Q},\boldsymbol{P},\sigma ,\theta\right)
\,\in \,\operatorname{Emb}(\mathbb{R}^3,\mathbb{R}^{2k}\times
\mathfrak{o}^*)\times\mathcal{F}(\Bbb{R}^3,\mathcal{O}),
\]
where $\operatorname{Emb}(\mathbb{R}^3,\mathbb{R}^{2k}\times
\mathfrak{o}^*)\times\mathcal{F}(\Bbb{R}^3,\mathcal{O})$ is
naturally endowed with the Poisson structure:
\begin{multline}\label{PBB2}
\{F,G\}(\boldsymbol{Q},\boldsymbol{P},\sigma ,\theta)=\int\frac1w\left(\frac{\delta
F}{\delta \boldsymbol{Q}}\cdot\frac{\delta G}{\delta
\boldsymbol{P}}-\frac{\delta G}{\delta
\boldsymbol{Q}}\cdot\frac{\delta F}{\delta
\boldsymbol{P}}\right)\,{\rm d}^3x
\\
+ \int\frac1w\left(
 \left\langle\sigma ,\left[\frac{\delta F}{\delta
\sigma },\frac{\delta G}{\delta \sigma }\right]\right\rangle +
\left\langle\frac{\delta F}{\delta \theta},\frac{\delta G}{\delta
\sigma }\theta\right\rangle-\left\langle\frac{\delta G}{\delta
\theta},\frac{\delta F}{\delta \sigma }\theta\right\rangle\right)\,{\rm
d}^3x,
\end{multline}
where $w(x)\,{\rm d}^3x$ is a fixed volume form on  $\mathbb{R}^3$, 
and the angle bracket $\langle\cdot,\cdot\rangle$ denotes the trace
pairing $\left\langle
A,B\right\rangle=\operatorname{Tr}\left(A^TB\right)$. At this point, one observes that

\begin{theorem}\label{rightlegtrivial}
The infinitesimal action of
volume-preserving automorphisms
$(\mathbf{u},\zeta)\in\mathfrak{aut}_{\rm
vol}(\mathbb{R}^3\times\mathcal{O})\simeq\mathfrak{X}_{\rm
vol}(\mathbb{R}^3)\,\circledS\,\mathcal{F}(\mathbb{R}^3,\mathfrak{o})$
\begin{equation}\label{LieAlgAction}
\left(\mathbf{u},\zeta\right)_{\operatorname{Emb}(\mathbb{R}^3,\mathbb{R}^{2k}\times
\mathfrak{o}^*)\times\mathcal{F}(\Bbb{R}^3,\mathcal{O})}
\left(\boldsymbol{Q},\boldsymbol{P},\sigma ,\theta\right) =\Big(
(\mathbf{u}\cdot\nabla)\boldsymbol{Q},
(\mathbf{u}\cdot\nabla)\boldsymbol{P},
(\mathbf{u}\cdot\nabla)\sigma ,(\mathbf{u}\cdot\nabla)\theta+\theta\zeta \Big)
\end{equation}
yields the right-action momentum map
\[
\mathbf{J}_R\left(\boldsymbol{Q},\boldsymbol{P},\sigma ,\theta\right)=
\Big(\!\operatorname{curl}\!\left(\nabla\boldsymbol{Q}^T\cdot\boldsymbol{P}
+\left\langle\sigma ,\nabla\theta\,\theta^{-1}\right\rangle\right),\,\operatorname{Ad}^*_\th\sigma \Big)
\,\in\,\X_{\vol}(\mathbb{R}^3)\times\mathcal{F}(\mathbb{R}^3,\mathfrak{o})^*
\,.
\]
\end{theorem}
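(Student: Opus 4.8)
The plan is to read the bracket \eqref{PBB2} as the one induced on $\operatorname{Emb}(\mathbb{R}^3,M)$ by the exact symplectic structure of the target $M=T^*\bar P=\mathbb{R}^{2k}\times\mathcal{O}\times\mathfrak{o}^*$, where the last two factors are the right trivialization $T^*\mathcal{O}\simeq\mathcal{O}\times\mathfrak{o}^*$, and then to invoke the standard fact that the momentum map of an action preserving a primitive of the symplectic form is obtained by contracting the infinitesimal generator with that primitive. Writing $\mu_S=w\,\mathrm{d}^3x$, the symplectic form behind \eqref{PBB2} admits the primitive one-form
\[
\Theta(\boldsymbol{Q},\boldsymbol{P},\sigma,\theta)(\delta\boldsymbol{Q},\delta\boldsymbol{P},\delta\sigma,\delta\theta)=\int_{\mathbb{R}^3}\Big(\boldsymbol{P}\cdot\delta\boldsymbol{Q}+\big\langle\sigma,\delta\theta\,\theta^{-1}\big\rangle\Big)\mu_S,
\]
namely the integral over $\mathbb{R}^3$ of the right-trivialized Liouville form $\boldsymbol{P}\cdot\mathbf{d}\boldsymbol{Q}+\langle\sigma,\mathbf{d}\theta\,\theta^{-1}\rangle$ of $T^*\bar P$. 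This mirrors the Marsden--Weinstein right leg \eqref{santiago}, where $\mathbf{J}_R(f)=[f^*\theta]$.

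First I would check that the right action \eqref{LieAlgAction} preserves $\Theta$, so that the contraction formula applies with no cocycle correction (consistent with the observation that the right leg needs no central extension). For the reparametrization part, the generators act by $\mathbf{u}\cdot\nabla$ with $\operatorname{div}\mathbf{u}=0$; since $\Theta$ is the integral of a natural pointwise expression against the $\mathbf{u}$-invariant volume $\mu_S$, it is preserved. For the gauge part $\delta\theta=\theta\zeta$ (the other fields fixed), preservation follows from the right-invariance of the Maurer--Cartan form: under $\theta\mapsto\theta g$ one has $(\delta\theta\,g)(\theta g)^{-1}=\delta\theta\,\theta^{-1}$, so $\Theta$ is literally unchanged. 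Cartan's formula with $\mathcal{L}_{\xi_N}\Theta=0$ then yields $\mathbf{i}_{\xi_N}\Omega=\mathbf{d}\,\mathbf{i}_{\xi_N}\Theta$, i.e. $\langle\mathbf{J}_R,\xi\rangle=\Theta(\xi_N)$, which is the general cotangent-lift prescription $\langle\mathbb{J}(\alpha_q),\xi\rangle=\langle\alpha_q,\xi_Q(q)\rangle$ recalled earlier.

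Next I would substitute the explicit generator \eqref{LieAlgAction}, with $\delta\boldsymbol{Q}=(\mathbf{u}\cdot\nabla)\boldsymbol{Q}$ and $\delta\theta=(\mathbf{u}\cdot\nabla)\theta+\theta\zeta$, into $\Theta(\xi_N)$, and separate the terms pairing with $\zeta$ from those pairing with $\mathbf{u}$. The $\zeta$-terms give $\int_{\mathbb{R}^3}\langle\sigma,\theta\zeta\theta^{-1}\rangle\mu_S=\int_{\mathbb{R}^3}\langle\operatorname{Ad}^*_\theta\sigma,\zeta\rangle\mu_S$, using $\theta\zeta\theta^{-1}=\operatorname{Ad}_\theta\zeta$ (valid since $\mathcal{O}\subseteq GL(n,\mathbb{R})$) and the defining relation $\langle\sigma,\operatorname{Ad}_\theta\zeta\rangle=\langle\operatorname{Ad}^*_\theta\sigma,\zeta\rangle$; this produces the second component $\operatorname{Ad}^*_\theta\sigma$. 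The $\mathbf{u}$-terms collect, after $\boldsymbol{P}\cdot(\mathbf{u}\cdot\nabla)\boldsymbol{Q}=(\nabla\boldsymbol{Q}^T\cdot\boldsymbol{P})\cdot\mathbf{u}$ and the analogous rewriting of the Maurer--Cartan term, into $\int_{\mathbb{R}^3}\alpha\cdot\mathbf{u}\,\mu_S$ with the one-form $\alpha=\nabla\boldsymbol{Q}^T\cdot\boldsymbol{P}+\langle\sigma,\nabla\theta\,\theta^{-1}\rangle$.

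Finally, since this pairing is tested only against divergence-free $\mathbf{u}$, the first leg is well defined modulo $\mathbf{d}\mathcal{F}(\mathbb{R}^3)$, and the identification of $\mathfrak{X}_{\operatorname{vol}}(\mathbb{R}^3)^*$ with divergence-free vector fields through the duality pairing \eqref{domeg} of \S\ref{colorado} sends $[\alpha]$ to $\operatorname{curl}\alpha$, giving the stated first component $\operatorname{curl}(\nabla\boldsymbol{Q}^T\cdot\boldsymbol{P}+\langle\sigma,\nabla\theta\,\theta^{-1}\rangle)$. I expect the principal obstacle to be the bookkeeping of the right trivialization of $T^*\mathcal{O}$: one must keep \eqref{PBB2}, the primitive $\Theta$, and the generator \eqref{LieAlgAction} in the same trivialization so that the right Maurer--Cartan form $\nabla\theta\,\theta^{-1}$ and the coadjoint $\operatorname{Ad}^*_\theta$ emerge with the correct placement, and verify that the full action preserves the \emph{primitive} $\Theta$ (not merely the symplectic form $\Omega$), which is what legitimizes the contraction formula and rules out any extension term in this leg.
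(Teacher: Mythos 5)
Your argument is correct, but it is a genuinely different route from the proof the paper actually gives for this statement. The paper's proof (Appendix \ref{proposition1}) is a direct Poisson-bracket verification: it inserts $J_{(\mathbf{u},\zeta)}=\langle\mathbf{J}_R,(\mathbf{u},\zeta)\rangle$ into \eqref{PBB2}, computes the functional derivatives $\delta J_{(\mathbf{u},\zeta)}/\delta\mu$ and $\delta J_{(\mathbf{u},\zeta)}/\delta\theta$ explicitly (the latter via the variation formula for $\operatorname{Ad}^*_{\theta^{-1}}\mu$), and checks component by component---through a nontrivial cancellation between the $\operatorname{ad}^*_{(\pounds_\mathbf{u}\theta)\theta^{-1}}\mu$ and $\operatorname{ad}^*_{\operatorname{Ad}_\theta\zeta}\mu$ contributions---that the Hamiltonian vector field of $J_{(\mathbf{u},\zeta)}$ reproduces the generator \eqref{LieAlgAction}. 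You instead recognize \eqref{PBB2} as the right-trivialized canonical structure on maps into the exact symplectic manifold $T^*\bar P$, exhibit the primitive $\Theta=\int\big(\boldsymbol{P}\cdot\mathbf{d}\boldsymbol{Q}+\langle\sigma,\mathbf{d}\theta\,\theta^{-1}\rangle\big)\mu_S$, verify that the action preserves $\Theta$ itself (not merely $\bar\omega$), and read off $\langle\mathbf{J}_R,(\mathbf{u},\zeta)\rangle=\Theta\big((\mathbf{u},\zeta)_{\operatorname{Emb}}\big)$; your insistence on invariance of the \emph{primitive} is exactly the correct justification for the absence of any cocycle correction on this leg. This is essentially the abstract mechanism the paper deploys only later---$\mathbf{J}_R(f)=[f^*\theta]$ in \eqref{J_R_aut_vol}, made concrete in Proposition \ref{Momap_right_YM} and in the ``Trivialized expression'' remark, where the computation $(\theta,\mu)^*R^*\Theta_\mathcal{O}=\mu\cdot(\mathbf{d}\theta)\theta^{-1}$ recovers Theorem \ref{rightlegtrivial} for $A=0$ and $S=\mathbb{R}^3$---and indeed the paper announces immediately after the theorem that the result ``will follow very easily in the more abstract setting of the next sections.'' What each approach buys: the appendix computation is self-contained, requires no identification of the geometric origin of \eqref{PBB2}, and certifies the momentum-map property directly in the coordinates in which the bracket is given; your argument is shorter, explains structurally where $\operatorname{Ad}^*_\theta\sigma$ and the Maurer--Cartan term come from, and your finite-dimensional inputs---the right-trivialized canonical one-form and the curl identification of $\mathfrak{X}_{\vol}(\mathbb{R}^3)^*$---are precisely \eqref{triv_theta_Xh} and \eqref{domeg} of the paper. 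One bookkeeping point: like the paper, which sets $w=1$ at the outset of its proof, you should note explicitly that the $1/w$ factors in \eqref{PBB2} cancel against $\mu_S=w\,\mathrm{d}^3x$ in your primitive, so that the two conventions match for general $w$.
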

See Appendix \ref{proposition1} for the proof.
Notice that the above result will follow very easily in the more abstract setting of the next sections.

Besides the above right-action momentum map ${\bf J}_R$, one also has the momentum map ${\bf J}_L$ arising from the left infinitesimal action of the Hamiltonian functions
$h\in\mathcal{F}(\mathbb{R}^{2k}\times T^*\mathcal{O})$, similarly to the left leg of 
the Marsden Weinstein dual pair for ideal fluid. 
However, there is an
important special situation that arises in the physics of Yang-Mills
charged particles. Indeed, the celebrated Wong's equations for the
motion of a single Yang-Mills charge are produced by Hamiltonians in
the space $\mathcal{F}(\mathbb{R}^{2k}\times
T^*\mathcal{O})^\mathcal{O}$ of $\mathcal{O}$-invariant functions on
$\mathbb{R}^{2k}\times T^*\mathcal{O}$. In this context, the
trivialization map
$T^*\mathcal{O}\simeq\mathcal{O}\times\mathfrak{o}^*$ induces (by
Lie-Poisson reduction) the Poisson structure on the space
$\mathbb{R}^{2k}\times
T^*\mathcal{O}/\mathcal{O}\simeq\mathbb{R}^{2k}\times\mathfrak{o}^*$
(cf. e.g. \cite{MoMaRa84}). Therefore, it is reasonable to consider
a left-action momentum map arising from the infinitesimal action of
the Poisson subalgebra
\[
\mathcal{F}(\mathbb{R}^{2k}\times
T^*\mathcal{O})^\mathcal{O}=\mathcal{F}(\mathbb{R}^{2k}\times\mathfrak{o}^*)\subset\mathcal{F}(\mathbb{R}^{2k}\times
T^*\mathcal{O}) \,.
\]
Indeed,
one verifies that the infinitesimal action of
$\bar{h}(\boldsymbol{q},\boldsymbol{p},{\zeta })\in\mathcal{F}(\mathbb{R}^{2k}\times\mathfrak{o}^*)$
\[
\big(\,\bar{h}\,\big)_{\mathcal{F}(\mathbb{R}^3,\mathbb{R}^{2k}\times
T^*\mathcal{O})}\left(\boldsymbol{Q},\boldsymbol{P},\sigma ,\theta\right)
=\!
\left.\left(\frac{\partial \bar{h}}{\partial\boldsymbol{p}},-\frac{\partial \bar{h}}{\partial\boldsymbol{q}},-\operatorname{ad}^*_{\textstyle\frac{\partial \bar{h}}{\partial\zeta }}\zeta ,\frac{\partial \bar{h}}{\partial{\zeta }}\,\theta\right)\!\right|_{\left(\boldsymbol{q},\boldsymbol{p},{\zeta }\right)=\left(\boldsymbol{Q,P},{\sigma }\right)}
\]
produces the left-action momentum map
\[
\mathbf{J}_L(\boldsymbol{Q,P},{\sigma },\theta) =
\!\int\!w(x)\,\delta(\boldsymbol{q}-\boldsymbol{Q}(x))\,\delta(\boldsymbol{p}-\boldsymbol{P}(x))\,\delta\!\left(\boldsymbol{\sigma}-\sigma (x)\right)\,
{\rm d}^3x \in \mathcal{F}(\mathbb{R}^{2k}\times\mathfrak{o}^*)^*
\]
In this case, the proof is straightforward and will be omitted.

\begin{remark}[Relations to kinetic theory and ideal fluids]\label{rem:YMV}\rm
In analogy with the structures arising from Euler's equation in two dimensions,
the above expression is well known in kinetic theory, particularly
the kinetic theory of Yang-Mills plasmas of interest in stellar
astrophysics. Indeed, the above expression coincides with the well
known Klimontovich particle solution of the Yang-Mills Vlasov
equation
for 
distributions on the reduced Yang-Mills phase space
$\mathbb{R}^{2k}\times\mathfrak{o}^*$ (\cite{GiHoKu1983}). However,
the fortunate coincidence holding for ordinary Euler's equation in
two dimensions does not hold for EP$\Aut_{\rm vol}$, that is the left
momentum map does not provide point vortex solutions of the 2D
EP$\Aut_{\rm vol}$ system. Indeed, the latter does not have the form
of a single Vlasov kinetic equation as it happens for 2D Euler.
\end{remark}

The next section characterizes the group actions underlying the
above momentum maps in the more general setting when the physical
space $\mathbb{R}^3$ is replaced by a volume manifold $S$ and the
trivial Yang-Mills phase space $T^*\mathbb{R}^k\times T^*\mathcal{O}$
is replaced by a generic (non trivial) $\mathcal{O}$-bundle carrying
an exact invariant symplectic form.


\subsection{Group actions and dual pair}

We now present a dual pair of momentum maps in the context of the
EP$\mathcal{A}ut_{\vol}$ equation, that consistently generalizes the
Marsden-Weinstein dual pair. As we shall see, in the principal bundle context the groups $\Diff_{\vol}(S)$ and $\Diff_{\ham}(M)\times _B \mathbb{R}  $ are naturally replaced by the automorphism groups $\Aut_{\vol}(P_S)$ and $\Aut_{\ham}(P)\times_B \mathbb{R} $ of two $ \mathcal{O} $-principal bundles $P_S$ and $P$, respectively. For the left momentum map, we will exhibit a subgroup $\overline{\Aut}_{\ham}(P)\times_B \mathbb{R}\subset \Aut_{\ham}(P)\times_B \mathbb{R} $ that is more appropriate and whose Lie algebra identifies with the space $\F(P/ \mathcal{O} )$ endowed with the reduced Poisson bracket.

\subsubsection{Geometric setting} Let $\pi:P_S\rightarrow S$ be the
principal $\O$-bundle of our EP$\mathcal{A}ut_{\vol}$ equation, and
consider another principal $\O$-bundle $P\rightarrow M$ such that
$P$ carries an exact symplectic form $\omega=-\mathbf{d}\theta$.
Moreover, we assume that $\theta$ is $\mathcal{O}$-invariant. As
above, we endow $P_S$ with the $\mathcal{O}$-invariant volume form
$\mu_{P_S}=\pi^*\mu_S\wedge\mathcal{A}^*\operatorname{det}_\gamma$,
where $\gamma$ is an $\operatorname{Ad}$-invariant inner product on
$\mathfrak{o}$ and $\A$ a principal connection on $P_S$.

Consider the manifold
\[
\Emb_{\O}(P_S,P)
\]
of $\O$-equivariant embeddings, see Appendix \ref{lemma3}. The tangent space
$T_f\Emb_{\O}(P_S,P)$ can be identified with the space of
equivariant maps $u_f: P_S\rightarrow TP$ such that $\pi\circ u_f=f$. We endow the manifold $\Emb_{\O}(P_S,P)$ with the symplectic form $\bar\omega$ as above, that is
\[
\bar\omega(f)(u_f,v_f):=\int_{P_S}\omega(f(p))(u_f(p),v_f(p))\mu_{P_S}.
\]
The function under the integral is $\mathcal{O}$-invariant, so the right hand side is an integral over $S$.
Now the local triviality of the bundle $P_S\to S$
ensures the non-degeneracy of $\bar\om$.

Thanks to Lemma \ref{volume_preserving_aut}, the group $\mathcal{A}ut_{\vol}(P_S)$ acts symplectically on $\Emb_{\O}(P_S,P)$ by composition on the right. Similarly, the group of Hamiltonian automorphisms, defined by
\[
\mathcal{A}ut_{\ham}(P):=\mathcal{A}ut(P)\cap \operatorname{Diff}_{\ham}(P)
\]
acts symplectically on the left by composition. If the structure group $\O$ of the principal bundles is the trivial group with one element, we recover the actions associated to the Marsden-Weinstein dual pair.

\subsubsection{The right momentum map} The
momentum map associated to the right action is
\begin{equation}\label{J_R_aut_vol}
\mathbf{J}_R(f)=[f^*\theta]\in \Omega^1_\mathcal{O}(P_S)/\mathbf{d}\F_{\mathcal{O}}(P_S)=\mathfrak{aut}_{\vol}(P_S)^*
\end{equation}
The dual to $\mathfrak{aut}_{\vol}(P_S)$ is identified here with
$\O$-invariant 1-forms quotiented by exact $\O$-invariant 1-forms which are differentials of $\O$-invariant functions.

One can write this identification in detail,
using a principal connection $\A$. Since a volume form is fixed, $\mathfrak{aut}(P_S)^*$ can be identified with $\Omega^1_\mathcal{O}(P_S)$. As in \eqref{isomorphism_A_dual}, the dual to the isomorphism
$\mathfrak{aut}(P_S)\rightarrow\mathfrak{X}(S)\times\mathcal{F}_\mathcal{O}(P_S,\mathfrak{o})$ reads
\begin{equation}\label{inv1form}
\Omega^1_\mathcal{O}(P_S)\rightarrow\Om^1(S)\x\F_\O(P_S,\mathfrak{o}^*),
\quad \al\mapsto \left(\bar\al=(\operatorname{Hor}^\mathcal{A})^*\al,\lambda=\mathbb{J}\circ \al\right),
\end{equation}
where $\operatorname{Hor}^\mathcal{A}$ denotes the horizontal-lift associated to the principal connection $\mathcal{A}$ and $\mathbb{J}:T^*P_S\rightarrow\mathfrak{o}^*$ is the cotangent bundle momentum map.
The inverse to the isomorphism \eqref{inv1form} is
\[
(\bar\al,\la)\mapsto \pi^*\bar\al+\mathcal{A}^*\lambda.
\]
Now the isomorphism $\mathfrak{aut}_{\vol}(P_S)\rightarrow\mathfrak{X}_{\vol}(S)\times\mathcal{F}_\mathcal{O}(P_S,\mathfrak{o})$, see \eqref{akau}, ensures the desired identification
\begin{align*}
\mathfrak{aut}_{\vol}(P_S)^*&=\X_{\vol}(S)^*\x\F_\O(P_S,\mathfrak{o}^*)
=(\Om^1(S)/\dd\F(S))\x\F_\O(P_S,\ou^*)\\
&=(\Om^1(S)\x\F_\O(P_S,\mathfrak{o}^*)) /(\mathbf{d}\F(S)\x\{0\})=\Omega^1_\mathcal{O}(P_S)/\mathbf{d}\F_\O(P_S).
\end{align*}
As we shall see later, for trivial bundles one recovers the conclusions in theorem \ref{rightlegtrivial}.

\subsubsection{The left momentum map} In
order to properly define the momentum map associated to the action
by composition on the left, there are two difficulties to overcome.
First, as above, we need to pass to the central extension of
$\mathcal{A}ut_{\ham}(P)$ by the cocycle defined in
\cite{IsLoMi2006}. This is possible since $\mathcal{A}ut_{\ham}(P)$
is a subgroup of $\operatorname{Diff}_{\ham}(P)$ on which the
cocycle $B$ can be defined as in \eqref{cocycle}. Second, we note
that the Lie algebra of the Hamiltonian automorphisms is given by
\[
\mathfrak{aut}_{\ham}(P)=\mathfrak{aut}(P)\cap\mathfrak{X}_{\ham}(P),
\]
that is, it consists of Hamiltonian vector fields that are $\mathcal{O}$-equivariant. Such vector fields are not necessarily associated to $\mathcal{O}$-invariant Hamiltonian functions on $P$. The latter form the Lie subalgebra
\[
\overline{\mathfrak{aut}}_{\ham}(P):=\left\{X_h\mid
h\in\mathcal{F}(P)^\mathcal{O}\right\}\subset
\mathfrak{aut}_{\ham}(P)\,,
\]
since the Lie algebra bracket is $[X_h,X_k]_L=X_{\{h,k\}}$ and the Poisson bracket of two $\mathcal{O}$-invariant functions is $\mathcal{O}$-invariant, the symplectic form being $\mathcal{O}$-invariant.

The subgroup associated to this Lie subalgebra is determined in the following theorem (see Appendix \ref{proposition2} for the proof).

\begin{theorem}\label{kernel}
Let $(P,-\dd\th)$  be a connected exact symplectic manifold
with $\th\in\Om_\O^1(P)$.
Let $\Hom(\O,\mathbb{R})$ be the vector space of group homomorphisms from $\O$ to $\mathbb{R}$ and consider the map
\[
\Psi:\mathcal{A}ut_{\ham}(P)\to\Hom(\O,\mathbb{R}),
\]
defined by
\[
\Psi(\varphi)(g):=F_\varphi-F_\varphi\circ\Phi_g\,,\qquad
 \forall\ g\in
G\,,
\]
where $F_\varphi\in\mathcal{F}(P)$ is such that
$\mathbf{d}F_\varphi=\ph^*\th-\th$. Then $\Psi$ is well-defined and
is a group homomorphism.

Consider the normal subgroup $\overline{\mathcal{A}ut}_{\ham}(P)\subset\mathcal{A}ut_{\ham}(P)$ defined by
\[
\overline{\mathcal{A}ut}_{\ham}(P):=\operatorname{ker}\Psi.
\]
Then the formal Lie algebra of $\overline{\mathcal{A}ut}_{\ham}(P)$ is the space $\overline{\mathfrak{aut}}_{\ham}(P)$ of Hamiltonian vector fields on $P$ associated to $\mathcal{O}$-invariant Hamiltonians.
\end{theorem}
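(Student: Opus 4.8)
The plan is to verify the three assertions in turn---well-definedness of $\Psi$, the homomorphism property in $\varphi$, and the identification of the kernel's formal Lie algebra---each resting on the two structural facts that $\varphi$ is equivariant (so $\varphi\o\Phi_g=\Phi_g\o\varphi$) and that $\theta$ is $\mathcal{O}$-invariant ($\Phi_g^*\theta=\theta$). First I would establish that $F_\varphi$ exists and is unique up to an additive constant. Since $\varphi\in\mathcal{A}ut_{\ham}(P)\subset\Diff_{\ham}(P)$ preserves $\omega=-\mathbf{d}\theta$, the form $\varphi^*\theta-\theta$ is closed; writing $\varphi$ as the time-one map of a path $\varphi_t$ of Hamiltonian diffeomorphisms generated by $h_t$ and using Cartan's formula $\pounds_{X_{h_t}}\theta=\mathbf{d}(\theta(X_{h_t})-h_t)$ (with the convention $\mathbf{i}_{X_{h_t}}\omega=\mathbf{d}h_t$) shows it is in fact exact, with primitive $F_\varphi=\int_0^1\varphi_t^*(\theta(X_{h_t})-h_t)\,{\rm d}t$. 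Connectedness of $P$ gives uniqueness up to a constant, so that $\Psi(\varphi)(g)=F_\varphi-F_\varphi\o\Phi_g$ does not depend on this choice. To see it is a constant I would compute $\mathbf{d}(F_\varphi-F_\varphi\o\Phi_g)=(\varphi^*\theta-\theta)-\Phi_g^*(\varphi^*\theta-\theta)$ and observe that equivariance gives $\Phi_g^*\varphi^*\theta=\varphi^*\Phi_g^*\theta=\varphi^*\theta$ while invariance gives $\Phi_g^*\theta=\theta$, so the differential vanishes and $\Psi(\varphi)(g)\in\mathbb{R}$. That $g\mapsto\Psi(\varphi)(g)$ is additive then follows by rewriting constancy as $F_\varphi\o\Phi_g=F_\varphi-\Psi(\varphi)(g)$ and composing with $\Phi_h$, using $\Phi_{gh}=\Phi_h\o\Phi_g$.

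Next, for the homomorphism property of $\Psi$ itself, the key is the cocycle identity $F_{\varphi\o\psi}=F_\varphi\o\psi+F_\psi$ (up to a constant), obtained by applying $\mathbf{d}$ to both sides and using $(\varphi\o\psi)^*\theta-\theta=\psi^*(\varphi^*\theta-\theta)+(\psi^*\theta-\theta)$. Substituting this into the definition of $\Psi(\varphi\o\psi)(g)$ and once more exploiting the equivariance of $\psi$ (so that $F_\varphi\o\psi\o\Phi_g=F_\varphi\o\Phi_g\o\psi$) reduces everything to the already-established shifts by $\Psi(\varphi)(g)$ and $\Psi(\psi)(g)$, yielding $\Psi(\varphi\o\psi)=\Psi(\varphi)+\Psi(\psi)$ in the abelian group $\Hom(\mathcal{O},\mathbb{R})$.

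Finally, to identify the formal Lie algebra of $\overline{\mathcal{A}ut}_{\ham}(P)=\ker\Psi$ I would compute the differential of $\Psi$ at the identity. Differentiating $F_{\varphi_t}$ along a path with $\dot\varphi_0=X_h$ gives $\tfrac{{\rm d}}{{\rm d}t}\big|_0F_{\varphi_t}=\theta(X_h)-h=:f$ modulo constants, whence $\mathbf{d}\Psi_{\id}(X_h)(g)=f-f\o\Phi_g$; thus $X_h\in\ker\mathbf{d}\Psi_{\id}$ exactly when $f$ is $\mathcal{O}$-invariant. The crucial observation is that for an equivariant $X_h$ the function $\theta(X_h)$ is automatically invariant: substituting $X_h\o\Phi_g=T\Phi_g\o X_h$ and using $\Phi_g^*\theta=\theta$ gives $\theta(X_h)\o\Phi_g=\theta(X_h)$. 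Hence $f=\theta(X_h)-h$ is invariant if and only if $h$ is invariant, and so $\ker\mathbf{d}\Psi_{\id}=\overline{\mathfrak{aut}}_{\ham}(P)$, the Hamiltonian vector fields associated to $\mathcal{O}$-invariant Hamiltonians, as claimed.

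I expect the main obstacle to be the existence and \emph{exactness} of $F_\varphi$---that $\varphi^*\theta-\theta$ is exact and not merely closed---which genuinely uses that $\varphi$ is Hamiltonian rather than just symplectic, together with connectedness of the Hamiltonian path; and, more delicately, the care required to interpret ``formal Lie algebra'' as $\ker\mathbf{d}\Psi_{\id}$ in this infinite-dimensional setting, where identifying the tangent space at the identity of $\ker\Psi$ with the kernel of the linearization is a formal statement rather than a consequence of a submersion theorem.
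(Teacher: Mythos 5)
Your proposal is correct and follows essentially the same route as the paper's proof in Appendix \ref{proposition2}: you define $\Psi$ via a primitive $F_\varphi$ of $\varphi^*\th-\th$, establish the homomorphism property through the cocycle identity $\mathbf{d}F_{\varphi\circ\psi}=\mathbf{d}(\psi^*F_\varphi+F_\psi)$ together with equivariance of $\psi$, and identify the formal Lie algebra by differentiating along a path to obtain the Lie algebra homomorphism $X_h\mapsto\big(g\mapsto h\circ\Phi_g-h\big)$, using the $\mathcal{O}$-invariance of $\mathbf{i}_{X_h}\th$. The only differences are that you supply details the paper asserts or cites—exactness of $\varphi^*\th-\th$ via the Hamiltonian isotopy (the paper invokes \cite{McSa98}), constancy of $F_\varphi-F_\varphi\circ\Phi_g$ from connectedness, and the invariance of $\th(X_h)$—which strengthens rather than alters the argument.
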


\begin{remark}[Special Hamiltonian automorphisms]\rm
The elements of the Lie group $\overline{\mathcal{A}ut}_{\ham}(P)$ will be called \emph{special Hamiltonian automorphisms} of the principal bundle $P$. This group is of central importance in common gauge theories: for example, they govern Wong's equations for the dynamics of a Yang-Mills charge moving in the physical space. (Just like Hamiltonian diffeomorphisms govern canonical Hamilton's equations).
\end{remark}

Consider now the central extension $\overline{\mathcal{A}ut}_{\ham}(P)\times_B\mathbb{R}$. The Lie algebra isomorphism \eqref{Lie_algebra_isom} shows that the associated central extended Lie algebra can be identified with the Lie algebra $\mathcal{F}(P)^\mathcal{O}$ of $\mathcal{O}$-invariant functions on $P$, endowed with the symplectic Poisson bracket $\{f,g\}_{P}=\omega(X_f,X_g)$ on $P$. Equivalently, it is also identified with the Lie algebra $\mathcal{F}(M)$ of functions on $M$ with the reduced Poisson bracket $\{\,,\}_{M}$ on $M$.
Equivariant functions on $P$ are constant on the fibers, so there is an isomorphism
\begin{equation}\label{isom_Poisson}
h\in \mathcal{F}(P)^\mathcal{O}\rightarrow \bar h\in\mathcal{F}(M)
\end{equation}
with inverse $\bar h\mapsto h=\bar h\circ\pi$. We recall here that the reduced Poisson bracket $\{\,,\}_{M}$ on the quotient $M=P/\mathcal{O}$ is uniquely determined by the formula
\[
\{f,g\}_{M}\circ\pi=\{f\circ\pi,g\circ\pi\}_{P},
\]
for all $f, g\in\mathcal{F}(M)$, see \S10.5 in \cite{MaRa99}.

\begin{remark}[The Vlasov chromomorphism group]\rm
The elements of the central extension $\overline{\mathcal{A}ut}_{\ham}(P)\times_B\mathbb{R}$ will be called \emph{Vlasov chromomorphisms} and this group will be denoted by $V\mathcal{C}hrom(P)$. The name is clearly inspired by the fact that this Lie group is the configuration space underling the collisionless Vlasov kinetic theory of interacting Yang-Mills charges moving in the physical space. As explained in \cite{GiHoKu1983}, the fluid closure of the Yang-Mills-Vlasov equation yields the equations of chromohydrodynamics for Yang-Mills plasmas, thereby providing one more reason for the name `chromomorphisms'.
\end{remark}

We now consider the action of the group $V\mathcal{C}hrom(P)$ on $\Emb_{\O}(P_S,P)$ by composition on the left. One easily checks that the associated momentum map is
\begin{equation}\label{left_momap_autvol}
\mathbf{J}_L:\Emb_{\O}(P_S,P)\rightarrow\mathcal{F}(M)^*=(\mathcal{F}(P)^\mathcal{O})^*,\quad  \left\langle\mathbf{J}_L(f),\bar h\right\rangle=\left\langle\mathbf{J}_L(f),h\right\rangle=\int_{P_S}h(f(p))\mu_{P_S}.
\end{equation}
Since the function $p\mapsto h(f(p))$ on $P_S$ is $\mathcal{O}$-invariant, it defines a function on $S$, and we have in fact an integral over $S$.
By abuse of notation, we can write
\[
\mathbf{J}_L(f)=\int_S\delta (n-f(p))\mu_S\in\mathcal{F}(M)^*.
\]
We thus obtain the dual pair

\begin{picture}(150,100)(-70,0)%
\put(102,75){$\Emb_{\O}(P_S,P)$}

\put(90,50){$\mathbf{J}_L$}

\put(160,50){$\mathbf{J}_R$}

\put(60,15){$\mathcal{F}(P/\O)^*$
}

\put(160,15){$\mathfrak{aut}_{\vol}(P_S)^*$
}

\put(130,70){\vector(-1, -1){40}}

\put(135,70){\vector(1,-1){40}}

\end{picture}\\
Despite the fact that the left leg seems to be the same as the left leg of the Marsden-Weinstein dual pair, there is a key difference here, since here $M=P/\O$ is not symplectic but Poisson, and the Lie bracket on $\mathcal{F}(M)$ is given by the reduced Poisson bracket $\{\,,\}_{M}$.

The results obtained so far are summarized in the following

\begin{theorem}[Dual pair for  $\operatorname{EP}\!\mathcal{A}ut_{\vol}$ flows] Let $\pi:P_S\rightarrow S$ be a principal $\O$-bundle and consider another principal $\O$-bundle $P\rightarrow M$ such that $P$ carries an exact symplectic form $\omega=-\mathbf{d}\theta$, where $\theta$ is $\mathcal{O}$-invariant.

Then the group $\mathcal{A}ut_{\vol}(P_S)$ acts symplectically on the right on the symplectic manifold $(\operatorname{Emb}_\mathcal{O}(P_S,P),\bar\omega)$ and admits the momentum map
\[
\mathbf{J}_R:\operatorname{Emb}_\mathcal{O}(P_S,P)\rightarrow\mathfrak{aut}_{\vol}(P_S)^*,\quad \mathbf{J}_R(f)=[f^*\theta].
\]

There is a subgroup $\overline{\mathcal{A}ut}_{\ham}(P)\subset\mathcal{A}ut_{\ham}(P)$ of special Hamiltonian automorphisms, whose Lie algebra consists of Hamiltonian vector fields associated to $\mathcal{O}$-invariant Hamiltonian functions on $(P,\omega)$. The Lie algebra of $V\mathcal{C}hrom(P)$  is isomorphic to the space of functions on $M$ endowed with the reduced Poisson bracket on $M=P/\mathcal{O}$.

The Vlasov chromomorphism group $V\mathcal{C}hrom(P)$ acts symplectically on the left on the symplectic manifold $(\operatorname{Emb}_\mathcal{O}(P_S,P),\bar\omega)$ and admits the momentum map
\[
\mathbf{J}_L:\operatorname{Emb}_\mathcal{O}(P_S,P)\rightarrow\mathcal{F}(M)^*,\quad\mathbf{J}_L(f)=\int_S\delta (n-f(p))\mu_S.
\]
\end{theorem}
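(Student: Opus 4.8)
The plan is to assemble the statement from the three blocks of results established in the preceding subsections, supplying the verifications that were only sketched there. Concretely, I would (i) check that both actions are symplectic, (ii) derive each momentum map from the exact-symplectic momentum map formula, and (iii) record the Lie-algebra identification coming from Theorem~\ref{kernel}. Throughout, the symplectic form on $\operatorname{Emb}_\mathcal{O}(P_S,P)$ is $\bar\omega=-\mathbf{d}\bar\theta$, where $\bar\theta(f)(v_f):=\int_{P_S}\theta(f(p))(v_f(p))\,\mu_{P_S}$; its non-degeneracy follows from the local triviality of $P_S\to S$ together with the non-degeneracy of $\omega$, as noted above.

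For the right leg, I would first show that the action $R_\psi(f)=f\circ\psi$ of $\psi\in\mathcal{A}ut_{\vol}(P_S)$ preserves $\bar\theta$, and hence $\bar\omega$. By Lemma~\ref{volume_preserving_aut} such a $\psi$ preserves $\mu_{P_S}$; writing the tangent map as $(TR_\psi\,v_f)(p)=v_f(\psi(p))$ and changing variables $q=\psi(p)$ in the defining integral of $\bar\theta$ gives $R_\psi^*\bar\theta=\bar\theta$ at once. Since the action preserves the potential $\theta$, the momentum map is the standard one for exact symplectic manifolds, namely $\langle\mathbf{J}_R(f),U\rangle=\bar\theta(f)(U_{\operatorname{Emb}}(f))$. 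Using that the infinitesimal generator of $U\in\mathfrak{aut}_{\vol}(P_S)$ is $\mathbf{d}f\circ U$, this equals $\int_{P_S}(f^*\theta)(U)\,\mu_{P_S}$. As $\theta$ is $\mathcal{O}$-invariant and $f$ equivariant, $f^*\theta\in\Omega^1_\mathcal{O}(P_S)$; reducing the integral to $S$ by Lemma~\ref{integration_invariant} and pairing against $U\in\mathfrak{aut}_{\vol}(P_S)$ shows that only the class of $f^*\theta$ modulo $\mathbf{d}\mathcal{F}_\mathcal{O}(P_S)$ is detected, giving $\mathbf{J}_R(f)=[f^*\theta]$ as in \eqref{J_R_aut_vol}.

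The Lie-algebra assertions are exactly the content of Theorem~\ref{kernel}: the special Hamiltonian automorphisms $\overline{\mathcal{A}ut}_{\ham}(P)=\ker\Psi$ have formal Lie algebra $\overline{\mathfrak{aut}}_{\ham}(P)=\{X_h\mid h\in\mathcal{F}(P)^\mathcal{O}\}$. Since $h\mapsto X_h$ kills the constants, it exhibits $\mathcal{F}(P)^\mathcal{O}$ (with the symplectic Poisson bracket) as a one-dimensional central extension of $\overline{\mathfrak{aut}}_{\ham}(P)$; at the group level this is realised by $V\mathcal{C}hrom(P)=\overline{\mathcal{A}ut}_{\ham}(P)\times_B\mathbb{R}$, with $B$ the cocycle of \cite{IsLoMi2006}. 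Finally, the isomorphism \eqref{isom_Poisson}, $h\mapsto\bar h$ with $h=\bar h\circ\pi$, transports this bracket to the reduced Poisson bracket $\{\,,\}_M$ on $M=P/\mathcal{O}$, yielding the stated identification of the Lie algebra of $V\mathcal{C}hrom(P)$ with $(\mathcal{F}(M),\{\,,\}_M)$.

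For the left leg, elements of $\mathcal{A}ut_{\ham}(P)$ are Hamiltonian diffeomorphisms and hence preserve $\omega$, so the action $L_\varphi(f)=\varphi\circ f$ preserves $\bar\omega$ and is symplectic. The infinitesimal generator of $h\in\mathcal{F}(P)^\mathcal{O}$ is $X_h\circ f$, and differentiating $\langle\mathbf{J}_L(f),h\rangle=\int_{P_S}h(f(p))\,\mu_{P_S}$ along a curve $f_t$ with $\dot f_0=v_f$ gives $\int_{P_S}\mathbf{d}h(f)(v_f)\,\mu_{P_S}=\int_{P_S}\omega(X_h\circ f,v_f)\,\mu_{P_S}=\bar\omega(X_h\circ f,v_f)$, which is precisely $\mathbf{i}_{(X_h)_{\operatorname{Emb}}}\bar\omega$; this confirms \eqref{left_momap_autvol}. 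I expect the delicate point to be the left leg: the bare group $\overline{\mathcal{A}ut}_{\ham}(P)$ admits a momentum map only up to the cocycle $B$, because a Hamiltonian vector field $X_h$ determines $h$ only up to a constant on the connected $P$, so the genuine equivariant momentum map is naturally defined on the central extension $V\mathcal{C}hrom(P)$ and valued in $\mathcal{F}(M)^*=(\mathcal{F}(P)^\mathcal{O})^*$. This is the analogue of the Marsden-Weinstein-Ismagilov mechanism, and checking that the constant contributed by $B$ matches the central direction of the extension is where the argument must be handled with care.
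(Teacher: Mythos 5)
Your proposal is correct and follows essentially the same route as the paper: the theorem there is explicitly a summary, assembled from Lemma \ref{volume_preserving_aut} (symplecticity of the right action), the computation of $\mathbf{J}_R(f)=[f^*\theta]$ in \eqref{J_R_aut_vol} with the dual identification $\mathfrak{aut}_{\vol}(P_S)^*\simeq\Omega^1_\mathcal{O}(P_S)/\mathbf{d}\mathcal{F}_\mathcal{O}(P_S)$, Theorem \ref{kernel} together with the isomorphisms \eqref{Lie_algebra_isom} and \eqref{isom_Poisson} for the chromomorphism Lie algebra, and the direct verification of \eqref{left_momap_autvol}, all of which you reproduce. Your only deviations are cosmetic and sound: you derive $\mathbf{J}_R$ from invariance of the potential $\bar\theta$ rather than by the direct differentiation used in Appendix \ref{App:MWDP}, and you correctly flag the constant-ambiguity in $h\mapsto X_h$ as the reason the left momentum map lives on the central extension $V\mathcal{C}hrom(P)$.
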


\begin{remark}[Equivariance, Clebsch variable, and Noether theorem]\normalfont The equivariance and invariance properties of the momentum maps still hold as in the Marsden-Weinstein dual pair, see Remark \ref{Equiv_Clebsch}. Therefore, we have the same Clebsch variables and Noether theorem interpretations.
The Lie-Poisson system on $\mathcal{F}(M)^*$ is a Vlasov system
whose Poisson bracket is not symplectic but is the reduced Poisson
bracket on $M=P/\mathcal{O}$. When $P=T^*\bar P$ and the
principal bundle $\bar P$ is trivial, this system is related to
Yang-Mills Vlasov equation. This particular case
will be considered in the following section.
\end{remark}


\subsection{The Yang-Mills phase space}\label{Sec:EPAut_volForYM}
We now consider the special case when the total space $P$ of the principal
bundle $P\rightarrow M$ is the cotangent bundle of another
principal bundle $\bar P\rightarrow \bar M$. We endow $P=T^*\bar P$ with the canonical symplectic form $\Omega_{\bar
P}=-\mathbf{d}\Theta_{\bar P}$ and we let $\mathcal{O}$ act on $P$ by cotangent lift. Thus we have $P=T^*\bar P$ and $M=T^*\bar
P/\mathcal{O}$. This particular choice is motivated by \S\ref{Clebsch_EPAutvol}, particularly by the dynamics of Yang-Mills-Vlasov kinetic theories, as mentioned in remark \ref{rem:YMV}.
If moreover $\bar P$ is trivial, that is $\bar P=\bar M\times\mathcal{O}$, then $P=T^*\bar P$ is also a trivial principal $\O$-bundle over  $M=T^*\bar P/\mathcal{O}=T^*\bar M\times\mathfrak{o}^*$, since we have the equivariant diffeomorphism
\begin{equation}\label{equiv_diffeo}
\rho: T^*\bar M\times T^*\mathcal{O} \rightarrow  \left(T^*\bar M\times \mathfrak{o}^*\right)\times\mathcal{O},
\quad\rho(\alpha_q,\alpha_g)=\left((\alpha_q,\alpha_gg^{-1}),g\right).
\end{equation}
In this case, the dual pair diagram reads

\begin{picture}(150,100)(-70,0)%
\put(67,75){$\Emb_{\O}(P_S,T^*\bar M\times T^*\mathcal{O})$}

\put(90,50){$\mathbf{J}_L$}

\put(160,50){$\mathbf{J}_R$}

\put(50,15){$\mathcal{F}(T^*\bar M\times\mathfrak{o}^*)^*$
}

\put(160,15){$\mathfrak{aut}_{\vol}(P_S)^*$.
}

\put(130,70){\vector(-1, -1){40}}

\put(135,70){\vector(1,-1){40}}

\end{picture}\\
The Lie bracket on $\mathcal{F}(T^*\bar M\times\mathfrak{o}^*)$ is the reduced Poisson bracket given here by
\[
\{f,g\}_{M}=\{f,g\}_{T^*\bar M}+\{f,g\}_+,
\]
where the first term denotes the canonical Poisson bracket on
$T^*\bar M$ and the second term is the Lie-Poisson bracket on
$\mathfrak{o}^*$. In what follows, it will be easy (although not necessary) to assume that $P_S$ is trivial, i.e. $P_S=S\times\mathcal{O}$ and $\mathfrak{aut}_{\vol}(P_S)^*\simeq\mathfrak{X}_{\vol}(S)^*\times\mathcal{F}(S,\mathcal{O})$.


\subsubsection{The group of special Hamiltonian automorphisms}\label{Sec:Aut_ham-bar}

Note that, since the principal bundle is trivial, we can write the automorphism group of $T^*\bar M\x T^*\O\rightarrow T^*\bar M\times\mathfrak{o}^*$ as a semidirect product
\begin{equation}\label{Aut_T*P_trivial}
\mathcal{A}ut(T^*\bar M\x T^*\O)=\operatorname{Diff}(T^*\bar M\times\mathfrak{o}^*)\,\circledS\,\mathcal{F}(T^*\bar M\times\mathfrak{o}^*,\mathcal{O}).
\end{equation}
From the general theory above, the group we need is the central extension
\[
V\mathcal{C}hrom(T^*\bar M\x T^*\O)= \overline{\mathcal{A}ut}_{\ham}(T^*\bar M\x T^*\O)\times_B\mathbb{R}.
\]

In order to describe more concretely this group, we start with the special case $\overline\Aut_{\ham}(T^*\O)$.
We will use the following expressions for the canonical one-form $ \Theta _\mathcal{O}$ and the Hamiltonian vector field $X _h $, in the right trivialization $T^* \mathcal{O} \rightarrow  \mathcal{O} \times \mathfrak{o} ^\ast $, $ \beta _g \mapsto (g, \alpha )$, $ \alpha = \beta _g g ^{-1} $:
\begin{equation}\label{triv_theta_Xh} 
(\Th_\O)_{(g,\al)}(\xi_g,\nu)=\left\langle\alpha ,\xi_g g^{-1}\right\rangle,\qquad X_h(g,\al)=\left(\frac{\delta h}{\delta \al}g,-\operatorname{ad}^*_{\frac{\delta h}{\delta \al}}\al\right),
\end{equation} 
see \cite{AbMa1978}. Note also that a $\O$-equivariant diffeomorphism $\ph$ of $T^*\O$, expressed in the right trivialization $ \mathcal{O} \times \mathfrak{o} ^\ast $, 
is of the form
\begin{equation}\label{equiv}
\ph(g,\al)=(\hat{\varphi}(\al)g,\bar\ph(\al)),
\end{equation}
where $\bar\ph$ is a diffeomorphism of $\ou^*$
and $\hat{ \varphi }:\ou^*\to\O$ a smooth map.

\begin{proposition}\label{pautbar}
In the right trivialization, the group $\overline\Aut_{ham}(T^*\O)$ of equivariant Hamiltonian automorphisms $\ph$
with the property that the $\O$-invariant 1-form $\ph^*\Th_\O-\Th_\O$ on $\O\times\ou^*$
is the differential of an $\O$-invariant function can be expressed as
\[
\overline\Aut_{ham}(T^*\O)=\left \{\varphi \in\Aut(T^* \mathcal{O} )\mid
\bar \varphi ( \alpha )= \operatorname{Ad}^*_{\hat{ \varphi }( \alpha ) ^{-1} } \alpha \;\; \text{and}\;\;\left\langle \id_{ \mathfrak{o}^*}, \hat{ \varphi } ^{-1} \mathbf{d} \hat{ \varphi }\right\rangle  \in \mathbf{d} \mathcal{F} ( \mathfrak{o}  ^\ast )\right\},
\]
where $\left\langle \id_{ \mathfrak{o}^*}, \hat{ \varphi } ^{-1} \mathbf{d} \hat{ \varphi }\right\rangle$ denotes the 1-form $\left\langle \alpha ,\hat{\varphi}( \alpha ) ^{-1} \dd _\alpha\hat{\varphi} (\_\,) \right\rangle$.
\end{proposition}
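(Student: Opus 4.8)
The plan is to reduce the whole statement to a single computation of the one-form $\ph^*\Th_\O-\Th_\O$ in the right trivialization, exploiting the normal form \eqref{equiv} of an equivariant diffeomorphism together with the explicit expression \eqref{triv_theta_Xh} for $\Th_\O$. Recall from Theorem \ref{kernel} that $\overline\Aut_{ham}(T^*\O)=\ker\Psi$ consists exactly of those Hamiltonian automorphisms $\ph$ for which a primitive $F_\ph$ of $\ph^*\Th_\O-\Th_\O$ can be chosen $\O$-invariant. The cotangent lift of right translation reads $(g,\al)\mapsto(gh,\al)$ in the right trivialization, so the $\O$-invariant functions on $\O\times\ou^*$ are precisely the functions of $\al$ alone; hence the defining property of $\overline\Aut_{ham}(T^*\O)$ becomes the requirement $\ph^*\Th_\O-\Th_\O=\mathbf{d}F$ for some $F=F(\al)$ independent of $g$.

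The central step is to differentiate $\ph(g,\al)=(\hat\ph(\al)g,\bar\ph(\al))$. Writing a tangent vector as $(\xi_g,\nu)\in T_g\O\times\ou^*$, the chain rule gives $T\ph(\xi_g,\nu)=\big(\hat\ph(\al)\xi_g+(\mathbf{d}_\al\hat\ph(\nu))g,\ \mathbf{d}_\al\bar\ph(\nu)\big)$, so that the ``velocity times inverse base point'' entering \eqref{triv_theta_Xh} is $\Ad_{\hat\ph(\al)}(\xi_g g^{-1})+(\mathbf{d}_\al\hat\ph(\nu))\hat\ph(\al)^{-1}$. Substituting into \eqref{triv_theta_Xh} and subtracting $\Th_\O$ I expect to obtain
\[
(\ph^*\Th_\O-\Th_\O)_{(g,\al)}(\xi_g,\nu)=\left\langle\Ad^*_{\hat\ph(\al)}\bar\ph(\al)-\al,\ \xi_g g^{-1}\right\rangle+\left\langle\bar\ph(\al),\ (\mathbf{d}_\al\hat\ph(\nu))\hat\ph(\al)^{-1}\right\rangle,
\]
which splits cleanly into a purely vertical part (pairing against $\xi_g g^{-1}\in\ou$) and a purely base part (along $\nu\in\ou^*$).

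Next I would impose $\O$-invariance of the primitive. Since such an $F=F(\al)$ has $\mathbf{d}F$ with no $\xi_g$-component, the vertical part must vanish identically; as $\xi_g g^{-1}$ ranges over all of $\ou$, this forces $\Ad^*_{\hat\ph(\al)}\bar\ph(\al)=\al$, i.e.\ $\bar\ph(\al)=\Ad^*_{\hat\ph(\al)^{-1}}\al$, which is the first condition of the Proposition. Feeding this back into the base part and using $\langle\Ad^*_{h^{-1}}\al,Y\rangle=\langle\al,h^{-1}Yh\rangle$, the term $\langle\bar\ph(\al),(\mathbf{d}_\al\hat\ph(\nu))\hat\ph(\al)^{-1}\rangle$ collapses to $\langle\al,\hat\ph(\al)^{-1}\mathbf{d}_\al\hat\ph(\nu)\rangle$, which is exactly the one-form $\left\langle\id_{\ou^*},\hat\ph^{-1}\mathbf{d}\hat\ph\right\rangle$ of the statement. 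Thus the residual requirement $\ph^*\Th_\O-\Th_\O=\mathbf{d}F$ is equivalent to $\left\langle\id_{\ou^*},\hat\ph^{-1}\mathbf{d}\hat\ph\right\rangle\in\mathbf{d}\F(\ou^*)$, the second condition, and running the computation backwards gives the reverse inclusion.

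Two points deserve care, the second being the genuine obstacle. First, the pointwise decoupling must be justified: at each $(g,\al)$ the quantities $\xi_g g^{-1}$ and $\nu$ vary independently, so matching the displayed one-form against $\mathbf{d}F=\langle\mathbf{d}_\al F,\nu\rangle$ legitimately separates into the two stated conditions. Second, and more delicate, is verifying that an equivariant diffeomorphism obeying the two conditions truly lies in $\Aut_{ham}(T^*\O)$ rather than merely in the symplectomorphism group: the conditions yield $\ph^*\mathbf{d}\Th_\O=\mathbf{d}\Th_\O$ together with an $\O$-invariant \emph{exact} primitive of $\ph^*\Th_\O-\Th_\O$, and I would argue that the exactness of this primitive (equivalently, the vanishing of the flux/Calabi-type invariant on the exact symplectic manifold $T^*\O$) is what places $\ph$ in the Hamiltonian subgroup, matching the hypothesis already built into the definition of $\overline\Aut_{ham}$ through Theorem \ref{kernel}.
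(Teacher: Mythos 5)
Your proposal is correct and follows the paper's own proof essentially step for step: the same right-trivialized pullback computation giving $(\varphi^*\Theta_{\mathcal{O}}-\Theta_{\mathcal{O}})_{(g,\alpha)}(\xi_g,\nu)=\left\langle \operatorname{Ad}^*_{\hat{\varphi}(\alpha)}\bar{\varphi}(\alpha)-\alpha,\,\xi_g g^{-1}\right\rangle+\left\langle \bar{\varphi}(\alpha),\,(\mathbf{d}_\alpha\hat{\varphi}(\nu))\hat{\varphi}(\alpha)^{-1}\right\rangle$, the same decoupling in which invariance of the primitive kills the vertical part (forcing $\bar{\varphi}(\alpha)=\operatorname{Ad}^*_{\hat{\varphi}(\alpha)^{-1}}\alpha$) and the base part collapses to $\left\langle \operatorname{id}_{\mathfrak{o}^*},\hat{\varphi}^{-1}\mathbf{d}\hat{\varphi}\right\rangle\in\mathbf{d}\mathcal{F}(\mathfrak{o}^*)$. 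Your one flagged worry is handled by the paper in exactly the way you suggest, via the standard fact stated in the first line of its proof that on an exact symplectic manifold the Hamiltonian diffeomorphisms are precisely those for which $\varphi^*\Theta_{\mathcal{O}}-\Theta_{\mathcal{O}}$ is exact.
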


\begin{proof}
We know that the Hamiltonian diffeomorphisms $\ph$ of an exact symplectic manifold 
with symplectic form $-\dd\Th_\O$ are characterized by the condition
that $\ph^*\Th_\O-\Th_\O$ is an exact 1-form.
Using the first equality in \eqref{triv_theta_Xh} and \eqref{equiv}, we compute
\begin{align*}
(\ph^*\Th_\O)_{(g,\al)}(\xi_g,\nu)&=(\Th_\O)_{\ph(g,\al)}\left( T_{(g,\al)}\ph(\xi_g,\nu)\right) \\
&=(\Th_\O)_{(\hat{\varphi}(\al)g,\bar\ph(\al))}\left( \hat{\varphi}(\al)\xi_g+(T_\al \hat{\varphi} \cdot \nu)g,T_\al\bar\ph \cdot \nu\right) \\
&=\left\langle \bar\ph(\al),\hat{\varphi}(\al)\xi_g g^{-1}\hat{\varphi}(\al)^{-1}+\left( T_\al\hat{\varphi} \cdot \nu\right) \hat{\varphi}(\al)^{-1}\right\rangle \\
&=\left\langle \Ad^*_{\hat{\varphi}(\al)}\bar\ph(\al),\xi_g g^{-1}\right\rangle +\left\langle \bar\ph(\al),(( \mathbf{d} \hat{\varphi} )\hat{\varphi}^{-1})_\al \cdot \nu\right\rangle 
\end{align*}
so that
\begin{equation}\label{exact}
(\ph^*\Th_\O-\Th_\O)_{(g,\al)}(\xi_g,\nu)=\left\langle \Ad^*_{\hat{\varphi}(\al)}\bar\ph(\al)-\al,\xi_g g^{-1}\right\rangle 
+\left\langle \bar\ph(\al),((\dd \hat{\varphi}) \hat{\varphi}^{-1})_\alpha \cdot \nu\right\rangle .
\end{equation}
Note also that an exact 1-form on $ \mathcal{O} \times\mathfrak{o}^\ast$ reads
\[
\left\langle \frac{\partial h}{\partial g} g ^{-1} , \xi _g g ^{-1} \right\rangle + \left\langle \frac{\delta  h}{\delta  \alpha }, \nu \right\rangle,
\]
where $h$ is a function on $ \mathcal{O} \times \mathfrak{o}  ^\ast $. So, the automorphism $\ph$ is Hamiltonian if and only if there exists a function $h(g, \alpha )$ such that
\begin{equation}\label{cond_exact}
\Ad^*_{\hat{\varphi}(\al)}\bar\ph(\al)-\al= \frac{\partial h}{\partial g}(g, \alpha ) \quad \text{and} \quad \left\langle \bar\ph(\al),\dd _\alpha\hat{\varphi} (\_\,) \hat{\varphi}(\alpha) ^{-1} \right\rangle= \frac{\delta h}{\delta \alpha }(g , \alpha ).
\end{equation}
In the right trivialization, an invariant function on $T^*\O$ is of the form $h(g,\al)=\bar h(\al)$ with $\bar h\in\F(\ou^*)$. So, from the first equality in \eqref{cond_exact}, we get the conditions $\Ad^*_{\hat{\varphi}(\al)}\bar\ph(\al)=\al$ and the second equality in \eqref{cond_exact} thus reads $ \left\langle \alpha ,\hat{\varphi}( \alpha ) ^{-1} \dd _\alpha\hat{\varphi} (\_\,) \right\rangle= \frac{\delta \bar h}{\delta \alpha }$.
\end{proof}
\begin{remark}[The Lie algebra of the special Hamiltonian automorphism group]\normalfont
Similarly with \eqref{equiv}, an equivariant vector field $U\in \mathfrak{aut}(T^* \mathcal{O} )$ reads $U( g, \alpha )=(\hat U( \alpha )g, \bar U( \alpha ))$, where $\hat U: \mathfrak{o} ^\ast \rightarrow \mathfrak{o} $ and $ \bar U: \mathfrak{o} ^\ast \rightarrow \mathfrak{o} ^\ast $.
Using Proposition \ref{pautbar}, we obtain that the formal Lie algebra of $\overline\Aut_{ham}(T^*\O)$ is
\begin{align*}
\overline{\mathfrak{aut}}_{ham}(T^*\O)
&=\left\{U \in \mathfrak{aut}(T^* \mathcal{O} ) \mid \bar U(\alpha) = - \operatorname{ad}^*_{\hat U(\alpha) }\alpha   \;\; \text{and}\;\;\langle \id_{ \mathfrak{o}^*},  \mathbf{d} \hat{ U}\rangle  \in \mathbf{d} \mathcal{F} ( \mathfrak{o}  ^\ast ) \right\}\\
&=\left\{\left .(\bar U, \hat U)=\left(-\ad_{\frac{\de \bar h}{\de \al}},\frac{\de \bar h}{\de \al}\right)\, \right |\,\bar h\in\F(\ou^*)\right\}\\
&=\left\{X_{h}\in\mathfrak{aut}_{ham}(T^*\O)\mid h\in\F(T^*\O),h(g,\al)=\bar h(\al)\right\},
\end{align*}
where at the second equality, we used that fact that the 1-form $\langle \id_{\ou^*},\dd \hat U\rangle $ is exact if and only if there exists $\bar h \in \mathcal{F} (\mathfrak{o} ^\ast )$ such that $\hat U= \frac{\delta \bar h}{\delta \alpha } $; in the last equality we used the second expression in \eqref{triv_theta_Xh}. So, consistently with Theorem \ref{kernel}, the Lie algebra $\overline{\mathfrak{aut}}_{\ham}(T^*\O)$ consists of Hamiltonian vector fields associated to $ \mathcal{O} $-invariant Hamiltonians on $T^* \mathcal{O} $ (i.e. functions on $ \mathfrak{o} ^\ast  $), hence the Lie algebra of $\overline\Aut_{ham}(T^*\O)\x_B\RR$ 
can be identified with the Poisson algebra of smooth
functions on $\ou^*$.
\end{remark}

\begin{remark}[Generating functions of special Hamiltonian automorphisms]\normalfont
Notice that the function $\mathcal{S}\in\mathcal{F}(\mathfrak{o}^*)$ such that 
\[
\mathbf{d}\mathcal{S}(\alpha)=\left\langle \id_{ \mathfrak{o}^*}, \hat{ \varphi } ^{-1} \mathbf{d} \hat{ \varphi }\right\rangle(\alpha) =\left\langle \alpha, \hat{ \varphi } ^{-1}(\alpha) \mathbf{d} \hat{ \varphi }(\alpha)\right\rangle
\]
is a generating function for special Hamiltonian automorphisms $\overline\Aut_{\rm ham}(T^*\O)$, analogously to what happens for canonical transformations $\operatorname{Diff}_{\rm ham}(T^*\bar{M})$.
\end{remark}

\begin{remark}[The chromomorphism group]\rm
In the special case considered above, i.e. when $P=T^*\mathcal{O}$, the central extension $\overline\Aut_{ham}(T^*\O)\x_B\RR$ will be called \emph{chromomorphism group} and will be denoted by $\mathcal{C}hrom(T^*\mathcal{O})$.
\end{remark}

\noindent
In the case of a non-trivial manifold $\bar M$, 
there is an expression similar to that of Proposition \ref{pautbar} 
for the  subgroup $\overline\Aut_{\ham}(T^*\bar M\x T^*\O)$ of the automorphism group \eqref{Aut_T*P_trivial},
namely
\begin{align}\label{bara}
\overline\Aut_{\ham}(T^*\bar M\x T^*\O)=&\left \{\ph=(\bar\ph,\hat \varphi )\in\Diff(T^*\bar M\x\ou^*)\x\F(T^*\bar M\x\ou^*,\O)\mid
\bar\ph_2=\Ad^*_{\hat \varphi^{-1}},\right .\nonumber\\
&\quad \left .\left\langle \id_{\ou^*},\hat \varphi^{-1}\dd \hat \varphi\right\rangle +\bar\ph_1^*\Th_{\bar M}-p_1^*\Th_{\bar M}\in\dd\F(T^*\bar M\x\ou^*)\right \},
\end{align}
where $p_1$ is the projection of $T^*\bar M\x\ou^*$ onto its first component, and where we split the diffeomorphism $\bar\ph$ into its two components $\bar\ph_1\in\F(T^*\bar M\x\ou^*,T^*\bar M)$ and $\bar\ph_2\in\F(T^*\bar M\x\ou^*,\ou^*)$. The verification is left to the reader.

Let $h\in\F(T^*\bar M\x T^*\O)^\O$ and let
$\bar h\in\F(T^*\bar M\x \mathfrak{o}^*)$ induced via
$h(\be_q,\al_g)=\bar h(\be_q,\al_g g^{-1})$ for all $g\in\O$, as
in \eqref{isom_Poisson}. 
From \eqref{triv_theta_Xh}, the Hamiltonian vector field $X_h$ on $T^*\bar M\x T^*\O$, pushed
forward to $(T^*\bar M\x\mathfrak{o}^*)\times\O$ by the right
trivialization \eqref{equiv_diffeo}, reads
\[
\left(\rho_*X_h\right)(\beta_q,\al,g)=\left(\left(X_{\bar h_\al}(\beta_q),-\operatorname{ad}^*_{\frac{\delta \bar h_{\beta_q}}{\delta \al}}\al\right),\frac{\delta \bar h_{\beta_q}}{\delta \al}g\right).
\]
Here $\bar h_\al$ denotes the function on $T^*\bar M$ obtained from $\bar h$ by fixing an
element $\al\in\mathfrak{o}^*$. Similarly, $\bar h_{\beta_q}$ is
the function on $\mathfrak{o}^*$ obtained by fixing $\beta_q\in
T^*\bar M$. 

One can see that the Lie algebra $\overline{\mathfrak{aut}}_{\ham}(T^*\bar M\x T^*\O)$
of all these Hamiltonian vector fields with $\O$-invariant Hamiltonian function $h$ 
is indeed the formal Lie algebra of $\overline \Aut_{\ham}(T^*\bar M\x T^*\O)$.
The second condition in \eqref{bara}, differentiated at $(\bar\ph,\hat \varphi )$ in direction $(\bar U, \hat U)$, tells us that
$\left\langle \id_{\ou^*},\dd \hat U\right\rangle +\pounds_{\bar U}\Th_{\bar M}$ is an exact 1-form on $T^*\bar M\x T^*\O$, which is equivalent to
$\hat U+\mathbf{i} _{\bar U}\Om_{\bar M}=\dd\bar h$ for some $\bar h\in\F(T^*\bar M\x T^*\O)$.
This means that $\bar U=X_{\bar h_\al}$ and $\hat U=\frac{\delta \bar h_{\beta_q}}{\delta \al}$.

\begin{remark}[The cocycle for Vlasov chromomorphisms]
{\rm 
In the special case $P=T^*\bar M \times T^* \mathcal{O} $, there is a more concrete expression of the  cocycle \eqref{cocycle} when restricted to $\overline{\Aut}_{\ham}(P)$, namely
\[
B((\bar\ph,a),(\bar\ps,b))=\int_{(\be_0,\si_0)}^{\bar\ps(\be_0,\si_0)}\mathbf{d}\mathcal{S},
\] 
where $(\be_0,\si_0)\in T^*\bar M\x\ou^*$ and we have introduced the generating function $\mathcal{S}$, such that 
\[
\mathbf{d}\mathcal{S}= \left\langle \id_{\ou^*},\hat{\varphi }^{-1}\dd \hat{ \varphi }\right\rangle +\bar\ph_1^*\Th_{\bar M}-p_1^*\Th_{\bar M}.
\]
}
\end{remark}


\subsubsection{Momentum maps}

We now give the
expression of these momentum maps in the case where both principal
bundles are trivial, that is,
\[
P_S=S\times\mathcal{O}\quad\text{and}\quad\bar P=\bar M\times\mathcal{O}.
\]
In this case, an equivariant embedding in $\Emb_{\O}(P_S,T^*\bar P)=\Emb_{\O}(S\times\mathcal{O}, T^*\bar M\times T^*\mathcal{O})$ is necessarily of the form
\begin{equation}\label{efff}
f:S\times\mathcal{O}\rightarrow T^*\bar M\times T^*\mathcal{O},\quad f(x,g)=\left(\boldsymbol{P_Q}(x),\kappa_\theta(x)g\right),
\end{equation}
where $\boldsymbol{P_Q}:S\rightarrow T^*\bar M$, $\kappa_\theta:S\rightarrow T^*\mathcal{O}$ with $\boldsymbol{P_Q}(x)\in T^*_{Q(x)}\bar M$ and $\kappa_\theta(x)\in T^*_{\theta(x)}\mathcal{O}$.
We endow the manifold $S\times\mathcal{O}$ with the volume form $\mu_{P_S}:=\mu_S\wedge \mu_\O$, where $\mu_\mathcal{O}$ is the Haar measure on the compact group $\mathcal{O}$, that is, $\int_\O\mu_\O=1$ and $\mu_\O$ is $\O$-invariant,
so that $\mu_{P_S}$ is $\O$-invariant too. Remark that choosing the trivial connection $\mathcal{A}(v_x,\xi_g)=g^{-1}\xi_g$
on $P_S=S\times \mathcal{O}$ and the fact that $\gamma$ is $\operatorname{Ad}$-invariant, we see that $\mu_{P_S}:=\mu_S\wedge \mu_\O$ and $\mu_{P_S}=\pi^*\mu_S\wedge \mathcal{A}^*\operatorname{det}_\gamma$ coincide, since the constant factor $\operatorname{Vol}(\O)$
from lemma \ref{integration_invariant} is 1.

\paragraph{Right momentum map.} In order to compute explicitly
the momentum map associated to the action of
$\mathcal{A}ut_{\vol}(P_S)$ we shall make use of the isomorphism
\eqref{inv1form}. Since the principal bundle $P_S=S\x\O$ is trivial, a
principal connection 1-form $\A$ is determined by a 1-form
$A\in\Om^1(S,\ou)$ through
\[
\mathcal{A}(v_x,\xi_g)=\operatorname{Ad}_{g^{-1}}\left(A(v_x)+\xi_gg^{-1}\right).
\]
The space $\mathcal{F}_\mathcal{O}(P_S,\mathfrak{o}^*)$ can be identified with $\mathcal{F}(S,\mathfrak{o}^*)$ via the relation
\[
\lambda(x,g)=\operatorname{Ad}^*_g(\bar\lambda(x)),\quad \lambda\in \mathcal{F}_\mathcal{O}(P_S,\mathfrak{o}^*),\quad \bar\lambda\in\mathcal{F}(S,\mathfrak{o}^*).
\]
Finally, the horizontal lift of the vector $v_x\in T_xS$ is
$(v_x,-A(v_x)g)\in T_{(x,g)}P_S$. Using these observations, the isomorphism \eqref{inv1form} reads
\begin{equation}\label{isom_trivial_bundle}
\Omega^1_\mathcal{O}(P_S)\rightarrow \Omega^1(S)\times\mathcal{F}(S,\mathfrak{o}^*),\quad\alpha\mapsto (\bar\alpha,\bar\lambda),
\end{equation}
where $\bar\alpha(x)(v_x)=\alpha(x,e)(v_x,-A(v_x))$ and $\bar\lambda(x)\cdot\xi=\alpha(x,e)(0_x,\xi)$. This clearly induces an isomorphism
\[
\mathfrak{aut}_{\vol}(P_S)^*=\Omega^1_\mathcal{O}(P_S)/\mathbf{d}\mathcal{F}_\mathcal{O}(P_S)\rightarrow \left(\Omega^1(S)/\mathbf{d}\mathcal{F}(S)\right)\times\mathcal{F}(S,\mathfrak{o}^*).
\]
Note that, since the bundle is trivial, we can always choose the trivial connection $A=0$.

\begin{proposition}\label{Momap_right_YM} Suppose that $P_S$ and $\bar P$ are trivial bundles. Then the momentum map associated to the right action of $\mathcal{A}ut_{\vol}(P_S)$ on $\Emb_{\O}(P_S,T^*\bar P)$ has the expression
\[
\mathbf{J}_R(f)=\left( \left[\boldsymbol{P_Q}^*\Theta_{\bar M}+\kappa_\theta^*\Theta_\mathcal{O}-\th^{-1}\ka_\th\cdot A\right] ,\theta^{-1}\kappa_\theta\right)\in \left(\Omega^1(S)/\mathbf{d}\mathcal{F}(S)\right)\times\mathcal{F}(S,\mathfrak{o}^*),
\]
where the embedding $f$ is written as
\[
f(x,g)=(\boldsymbol{P_Q}(x),\kappa_\theta(x)g),
\]
and $\Theta_{\bar M},\Theta_\mathcal{O}$ are the canonical one-forms on $T^*{\bar M}$, $T^*\mathcal{O}$.
\end{proposition}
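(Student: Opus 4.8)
The plan is to specialize the general right-leg formula $\mathbf{J}_R(f)=[f^*\theta]$ of \eqref{J_R_aut_vol}, where the symplectic potential is now the canonical one-form $\theta=\Theta_{\bar P}$ on $P=T^*\bar P=T^*\bar M\times T^*\mathcal{O}$, and then to read off the two components using the trivialization isomorphism \eqref{isom_trivial_bundle}. Writing $p_1,p_2$ for the projections of $T^*\bar M\times T^*\mathcal{O}$ onto its two factors and $\pr_S:S\times\mathcal{O}\to S$ for the bundle projection, one has $\Theta_{\bar P}=p_1^*\Theta_{\bar M}+p_2^*\Theta_{\mathcal{O}}$, so that $f^*\Theta_{\bar P}=f^*p_1^*\Theta_{\bar M}+f^*p_2^*\Theta_{\mathcal{O}}$. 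Since the first component of $f(x,g)=(\boldsymbol{P_Q}(x),\kappa_\theta(x)g)$ does not depend on $g$, the first summand equals $\pr_S^*(\boldsymbol{P_Q}^*\Theta_{\bar M})$; being horizontal and $\mathcal{O}$-invariant it contributes nothing to the $\mathfrak{o}^*$-component and contributes $\boldsymbol{P_Q}^*\Theta_{\bar M}$ to the $\Omega^1(S)$-component.

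The heart of the computation is the second summand $F^*\Theta_{\mathcal{O}}$, where $F(x,g):=\kappa_\theta(x)g$ is the cotangent lift of right translation applied to $\kappa_\theta(x)\in T^*_{\theta(x)}\mathcal{O}$. First I would record that this lifted right action preserves the \emph{spatial} (right-trivialized) momentum, i.e. in the trivialization $T^*\mathcal{O}\simeq\mathcal{O}\times\mathfrak{o}^*$, $\beta_g\mapsto(g,\beta_gg^{-1})$, one has $F(x,g)=(\theta(x)g,\sigma(x))$ with $\sigma:=\kappa_\theta\theta^{-1}\in\mathcal{F}(S,\mathfrak{o}^*)$ independent of $g$. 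Differentiating $F$ at $(x,e)$ along $(v_x,\xi)$ and feeding the result into the expression $(\Theta_{\mathcal{O}})_{(g,\alpha)}(\xi_g,\nu)=\langle\alpha,\xi_gg^{-1}\rangle$ of \eqref{triv_theta_Xh}, the base velocity splits as $T\theta(v_x)+\theta(x)\xi$, which gives
\[
(F^*\Theta_{\mathcal{O}})_{(x,e)}(v_x,\xi)=\langle\sigma(x),T\theta(v_x)\theta(x)^{-1}\rangle+\langle\sigma(x),\operatorname{Ad}_{\theta(x)}\xi\rangle.
\]
The first term is exactly $(\kappa_\theta^*\Theta_{\mathcal{O}})(v_x)$, while the second equals $\langle\operatorname{Ad}^*_{\theta(x)}\sigma(x),\xi\rangle=\langle(\theta^{-1}\kappa_\theta)(x),\xi\rangle$, i.e. the \emph{body} momentum paired with the vertical direction.

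It then remains to apply \eqref{isom_trivial_bundle}, which reads $\alpha=f^*\Theta_{\bar P}$ off on the vertical vector $(0_x,\xi)$ to produce $\bar\lambda=\mathbb{J}\circ\alpha$, and on the horizontal lift $(v_x,-A(v_x))$ to produce $\bar\alpha$. The vertical evaluation kills the $\bar M$-part and yields $\bar\lambda=\theta^{-1}\kappa_\theta$, which is the claimed second component. The horizontal evaluation gives $\boldsymbol{P_Q}^*\Theta_{\bar M}(v_x)+\kappa_\theta^*\Theta_{\mathcal{O}}(v_x)$ together with the contribution $\langle\theta^{-1}\kappa_\theta,-A(v_x)\rangle$ arising from the $\mathcal{O}$-component $-A(v_x)$ of the horizontal lift; this is precisely the correction $-\,\theta^{-1}\kappa_\theta\cdot A$. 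Passing to classes modulo $\mathbf{d}\mathcal{F}(S)$ then gives the asserted expression for $\mathbf{J}_R(f)$. The only genuine difficulty here is bookkeeping: one must keep the left/right trivializations, the two Maurer--Cartan forms, and the $\operatorname{Ad}/\operatorname{Ad}^*$ factors consistent, and in particular recognize that the spatial momentum $\sigma=\kappa_\theta\theta^{-1}$ is conserved by the lifted right action whereas it is the body momentum $\theta^{-1}\kappa_\theta$ that pairs with the fibre directions and hence appears as the $\mathfrak{o}^*$-component.
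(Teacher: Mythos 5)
Your proof is correct and follows essentially the same route as the paper: both specialize the general right-leg formula $\mathbf{J}_R(f)=[f^*\Theta_{\bar P}]$ and read off the two components via the connection-dependent isomorphism \eqref{isom_trivial_bundle}, evaluating $f^*\Theta_{\bar P}$ on horizontal lifts $(v_x,-A(v_x))$ and vertical generators $(0_x,\xi)$. The only cosmetic difference is that you carry out the $T^*\mathcal{O}$-part in the right trivialization using \eqref{triv_theta_Xh} and the identity $\operatorname{Ad}^*_\theta(\kappa_\theta\theta^{-1})=\theta^{-1}\kappa_\theta$, whereas the paper works intrinsically with the orbit map $\ell_{\kappa_\theta(x)}$ and the identity $(T\pi\circ T\ell_\beta)(\xi)=\pi(\beta)\xi$ — the same computation in different clothing, which the paper itself re-derives in trivialized form in the remark following the proposition.
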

\begin{proof} We fix a connection $\A$ and use the isomorphism \eqref{isom_trivial_bundle}. Given $\be\in T^*\O$, we denote by $\ell_\be:\O\to T^*\O$ the orbit map defined by $\ell_\be(g)=\be g$.
Knowing that $f(x,g)=(\boldsymbol{P_Q}(x),\ka_\th(x)g)$, the first component $\bar\al$ of $\al=f^*\Th_{\bar P}$ computes
\begin{align*}
\bar\al(v_x)&=(f^*\Th_{\bar P})(v_x,-A(v_x))\\
&=\boldsymbol{P_Q}^*\Theta_{\bar M}(v_x)+\kappa_\theta^*\Theta_\mathcal{O}(v_x)
-\Th_\O(T\ell_{\ka_\th(x)}(A(v_x)).
\end{align*}
Using the definition of the canonical 1-form $\Th_\O$
and the identity $(T\pi\o T\ell_\be)(\xi)=
\pi(\be)\xi$ for all $\xi\in\ou$,
the last term becomes
\begin{align*}
\Th_\O(T\ell_{\ka_\th(x)}(A(v_x))
&=\ka_\th(x)\cdot(T\pi\o T\ell_{\ka_\th(x)})(A(v_x))
=\ka_\th(x)\cdot\th(x)(A(v_x))
\\&=\th(x)^{-1}\ka_\th(x)\cdot A(v_x),
\end{align*}
so $\bar\al=\boldsymbol{P_Q}^*\Theta_{\bar M}+\kappa_\theta^*\Theta_\mathcal{O}
-\th^{-1}\ka_\th\cdot A\in\Om^1(S)$.

The second component of $\al=f^*\Th_{\bar P}$ is $\bar\la=\th^{-1}\ka_\th\in\F(S,\ou^*)$ because for all $x\in S$ and $\xi\in\mathfrak{o}$
\begin{align*}
\bar\la(x)\cdot\xi&
=(f^*\Th_{\bar P})(0_x,\xi)
=\Th_\O(T\ell_{\ka_\th(x)}(\xi))
=\ka_\th(x)\cdot (T\pi\o T\ell_{\ka_\th(x)})(\xi)\\
&=\ka_\th(x)\cdot\th(x)\xi=\th(x)^{-1}\ka_\th(x)\cdot\xi.
\end{align*}
This shows that
\[
\mathbf{J}_R(f)=\left( \left[\boldsymbol{P_Q}^*\Theta_{\bar M}+\kappa_\theta^*\Theta_\mathcal{O}-\th^{-1}\ka_\th\cdot A\right] ,\theta^{-1}\kappa_\theta\right)\in \left(\Omega^1(S)/\mathbf{d}\mathcal{F}(S)\right)\times\mathcal{F}(S,\mathfrak{o}^*)
\]
is the momentum map associated to the right action.
\end{proof}

\begin{remark}[Trivialized expression]\normalfont
For comparison with the formula obtained in Proposition \ref{rightlegtrivial}, it is useful to identify the manifold
$\Emb_{\O}(P_S,T^*\bar P)$ with the product $\Emb(S,T^*\bar M\times\mathfrak{o}^*)\x\F(S,\mathcal{O})$. This is possible thanks to Lemma \ref{lemma_Q_KK}. More precisely, to the equivariant embedding $f:S\times\mathcal{O}\rightarrow T^*\bar M\times T^*\mathcal{O}$,
\[
f(x,g)=\left(\boldsymbol{P_Q}(x),\kappa_\theta(x)g\right),\quad \boldsymbol{P_Q}:S\rightarrow T^*\bar M, \quad\kappa_\theta:S\rightarrow T^*\mathcal{O},
\]
we associate the pair $((\boldsymbol{P_Q},\mu),\th)\in \Emb(S,T^*\bar M\times\mathfrak{o}^*)\x\F(S,\mathcal{O})$,
where
\[
\mu(x)=\ka_\th(x)\th(x)^{-1}\in\mathfrak{o}^*.
\]
In order to obtain the momentum map of Proposition \ref{Momap_right_YM} in terms of the variables $(\mu,\theta)$, we introduce the right trivialization $R:\O\x\mathfrak{o}^*\to T^*\O$, $R(\theta ,\mu )=\mu \theta $ and we compute the expression $R^*\Theta_\mathcal{O} $. For $(g,\alpha )\in \mathcal{O} \times\mathfrak{o}^*$ and $(\xi_g,\nu)\in T^*_g \mathcal{O} \times\mathfrak{o}^*$,  we have
\begin{align*}
(R^*\Th_\O)_{(g,\al)}(\xi_g,\nu)&=( \Th_\O)_{\al g} \left( T_g\ell_\al(\xi_g)+V_\nu(\al g)\right) \\
&=\al g\cdot T_{\al g}\pi(T_g\ell_\al(\xi_g))  =\al g \cdot \xi_g =\al\cdot \xi_g g^{-1},
\end{align*}
where $V_\nu$ is the vertical vector field on $T^*\O$ defined by $V_\nu(\al g)=\left.\frac{d}{dt}\right|_{t=0} ((\al+t\nu)g)$. Then for $(\th,\sigma ):S\to\O\x\o^*$,
we have
\[
(\theta, \sigma   )^*(R^*\Th_\O)(v_x)=\left( R^*\Th_\O\right) _{(\theta (x), \mu (x))}(\mathbf{d} \theta , \mathbf{d} \mu )=\mu(x) \cdot (\dd\th)\th^{-1}(x).
\]
Therefore,  the right momentum map from proposition \ref{Momap_right_YM}
becomes
\begin{align*}
\J_R(\boldsymbol{P_Q},\sigma ,\th)& =\left( \left[\boldsymbol{P_Q}^*\Theta_{\bar M}+(\th,\mu)^*R^*\Theta_\mathcal{O}-\th^{-1}\ka_\th\cdot A\right] ,\theta^{-1}\kappa_\theta\right)\\
&=\left( \left[\boldsymbol{P_Q}^*\Theta_{\bar M}+\mu \cdot (\dd\th)\th^{-1}
-\Ad^*_\th\mu\cdot A\right] ,\Ad^*_\th\mu\right).
\end{align*}
For $A=0$ and $S=\mathbb{R}^3$, we recover the expression of $\J_R$ from proposition \ref{rightlegtrivial}.
\end{remark}

\paragraph{Left momentum map.} From the general formula
\eqref{left_momap_autvol} above, we have
\[
\mathbf{J}_L:\Emb_{\O}(S\times\mathcal{O}, T^*\bar M\times T^*\mathcal{O})\rightarrow\mathcal{F}(T^*\bar M\times T^*\mathcal{O})^\mathcal{O}=\mathcal{F}(T^*\bar M\times\mathfrak{o}^*)
\]
and
\begin{align*}
\left\langle\mathbf{J}_L(f),h\right\rangle&=\int_{S\times\mathcal{O}}h(f(x,g))\mu_S\wedge\mu_\mathcal{O}=\int_{S\times\mathcal{O}}h(\boldsymbol{P_Q}(x),\kappa_\theta(x)g)\mu_S\wedge\mu_\mathcal{O}\\
&=\int_S\bar h\left(\boldsymbol{P_Q}(x),\kappa_\theta(x)\theta(x)^{-1}\right)\mu_S.
\end{align*}
Hence, we can write the formula
\[
\mathbf{J}_L(f)=\int_S\delta (\alpha_q-\boldsymbol{P_Q}(x))\delta \left(\mu-\kappa_\theta(x)\theta(x)^{-1}\right)\mu_S\in\mathcal{F}(T^*\bar M\times\mathfrak{o}^*)^*.
\]
As we have discussed, the above momentum map is not a solution of
the EP$\Aut_{\rm vol}$ equation. However this expression is of
primary importance in the kinetic theory of Yang-Mills Vlasov
plasmas, since this is nothing else than the single particle
solution of the Vlasov equation, which is the base
for the theory of Yang-Mills charged fluids. Indeed, in
\cite{GiHoKu1983} it is shown that the Vlasov equation itself is directly constructed starting from the
above momentum map.

\section{Conclusions}

After presenting the
EP$\Aut$ equations in the case of a trivial principal $\O-$bundle
$P=M\times\O$, the paper introduced the associated left and right
momentum maps which extend the well known dual pair structure for
geodesic flows on the diffeomorphism group \cite{HoMa2004}. Thus, as
a first result, the paper incorporated the momentum maps found in
\cite{HoTr2008} in a unifying dual pair diagram, whose right leg
provides the Clebsch representation of the EP$\Aut$ system, while
the left leg provides the singular $\delta-$like solutions.

In the third section, the paper extended the previous construction to
consider the case of a non-trivial principal $\mathcal{O}-$bundle
$P$. For this more general case, the EP$\Aut$ equations were written
also in terms of the adjoint bundle ${\rm
Ad}P=(P\times\mathfrak{o})/\mathcal{O}$, by using the identification
between $\mathcal{O}-$equivariant maps
$\mathcal{F_O}(P,\mathfrak{o})$ and the space $\Gamma({\rm Ad}P)$ of
sections of the adjoint bundle. In this setting, the momentum maps
were defined on the space of $\mathcal{O}-$equivariant embeddings
${\rm Emb}_\mathcal{O}(P_S,P)$ of the sub-bundle $P_S$ in the
ambient bundle $P$. This is a crucial point because it showed for the
first time how the right action momentum map (Clebsch
representation) necessarily involves the connection on $P_S$. When
specialized to the case of a trivial bundle, the EP$\Aut$ dual pair
showed how the singular solutions may support their own magnetic
field, for example in the case of current sheets. This is a very
suggestive picture that arises from the principal bundle structure
of the embedded subspace.

The fourth section considered the case of an incompressible fluid
flow. In this framework, one considers the group of
volume-preserving automorphisms $\Aut_{\rm vol}(P)$, that is bundle
automorphisms that project down to diffeomorphisms
$\operatorname{Diff}_{\rm vol}(M)$ on the base $M=P/\mathcal{O}$,
which preserve a fixed volume form. Again, the equations were
presented in both cases of a trivial and non-trivial principal
bundle. This geometric construction was compared with a
similar formulation that also applies to Euler's vorticity equation,
in the context of \cite{MaWe83}. Left- and right-action momentum maps were presented also for
this case, along with explicit formulas. These formulas
require a deep geometric construction that involves Lie group
extensions. For example, the left leg momentum map required the
definition of the chromomorphism group, a Yang-Mills version of the quantomorphism group from
quantization theory. Indeed, new definitions of infinite-dimensional Lie groups arose naturally in this context: the special Hamiltonian automorphisms and two variants of the chromomorphism group.


\appendix

\section{Appendices on EP$\mathcal{A} ut$ flows}

\subsection{Proof of Lemma \ref{lemma_split}\label{lemma1}}

We will use the formula
\begin{align}\label{brac}
[U,V]_L&=\sigma\left([\mathcal{A}(U),\mathcal{A}(V)]+\mathbf{d}^\mathcal{A}(\mathcal{A}(U))V-\mathbf{d}^\mathcal{A}(\mathcal{A}(V))U+\mathcal{B}(U,V)\right)\nonumber\\
&\qquad\qquad +\operatorname{hor}([U,V]_L),
\end{align}
where $[\,,]_L$ denotes the (left) Lie algebra bracket and
$\mathcal{B}$ is the curvature of the connection. We refer to Lemma 5.2 in
\cite{GBRa2008a} for a proof of this formula.

Denoting by $\langle\,,\rangle$ the $L^2$-pairing, for an arbitrary $\mathbf{v}\in\mathfrak{X}(M)$, we have
\begin{align*}
\left\langle \left(\operatorname{Hor}^\mathcal{A}\right)^*\pounds_U\beta,\mathbf{v}\right\rangle&=\left\langle\beta ,[U,\operatorname{Hor}^\mathcal{A}\mathbf{v}]_L\right\rangle\\
&=\left\langle \beta ,\operatorname{hor}([U,\operatorname{Hor}^\mathcal{A}\mathbf{v}]_L)+\sigma(\mathbf{d}\omega\!\cdot\!\operatorname{Hor}^\mathcal{A}\mathbf{v}+\mathcal{B}(\operatorname{Hor}^\mathcal{A}\mathbf{u},\operatorname{Hor}^\mathcal{A}\mathbf{v}))\right\rangle\\
&=\left\langle \beta ,\operatorname{Hor}^\mathcal{A}([\mathbf{u},\mathbf{v}]_L)\right\rangle
+\left\langle\mathbb{J}\circ \beta ,\mathbf{d}\omega\!\cdot\!\operatorname{Hor}^\mathcal{A}\mathbf{v}+\mathcal{A}([\operatorname{Hor}^\mathcal{A}\mathbf{u},\operatorname{Hor}^\mathcal{A}\mathbf{v}]_L)\right\rangle\\
&=\left\langle \pounds_\mathbf{u}\left(\operatorname{Hor}^\mathcal{A}\right)^*\beta ,\mathbf{v})\rangle+\langle\left(\operatorname{Hor}^\mathcal{A}\right)^*\left(\mathbb{J}\circ \beta \!\cdot\!\mathbf{d}\omega\right),\mathbf{v}\right\rangle\\
&\qquad+\left\langle\left(\operatorname{Hor}^\mathcal{A}\right)^*\pounds_{\operatorname{Hor}^\mathcal{A}\mathbf{u}}(\mathbb{J}\circ \beta \!\cdot\!\mathcal{A}),\mathbf{v}\right\rangle,
\end{align*}
thus proving the first formula.

For an arbitrary $\theta\in\mathcal{F}_\mathcal{O}(P,\mathfrak{o})$, we have
\begin{align*}
\langle \mathbb{J}\circ\pounds_U\beta,\theta\rangle&=\langle \pounds_U\beta,\sigma(\theta)\rangle=\langle \beta ,[U,\sigma(\theta)]_L\rangle=\langle \beta ,\sigma([\omega,\theta]-\mathbf{d}^\mathcal{A}\theta\cdot U)\rangle\\
&=-\langle \mathbb{J}\circ \beta ,\mathbf{d}\theta \cdot U\rangle =-\langle \mathbb{J}\circ \beta ,\mathbf{d}\theta\!\cdot\!\operatorname{Hor}^\mathcal{A}\mathbf{u}-[\omega,\theta]\rangle\\
&=\langle \pounds_{\operatorname{Hor}^\mathcal{A}\mathbf{u}}(\mathbb{J}\circ \beta )+\operatorname{ad}^*_\omega(\mathbb{J}\circ \beta ),\theta\rangle
\end{align*}
This proves the second formula.$\quad\blacksquare$

\subsection{Proof of Lemma \ref{lemma_Q_KK}\label{lemma3}}

Recall from \S\ref{sec:momaps_and_singsol} that the momentum maps of the EP$\mathcal{A}ut$-equation are defined on the cotangent bundle of the Kaluza-Klein configuration space
\[
Q_{KK}=\{\mathcal{Q}:P_S\rightarrow P\mid\mathcal{Q}\circ\Phi_g=\Phi_g\circ\mathcal{Q}\;\text{and}\;\mathbf{Q}\in\operatorname{Emb}(S,M)\},
\]
where $\mathbf{Q}:S\rightarrow M$ is defined by the condition $\pi\circ\mathcal{Q}=\mathbf{Q}\circ\pi$. Note that $Q_{KK}$ is a natural generalization of the space $\operatorname{Emb}(S,M)$ associated to the EPDiff equations. Another natural generalization would be the space $\operatorname{Emb}_\mathcal{O}(P_S,P)$ of equivariant embeddings $\mathcal{Q}:P_S\rightarrow P$. We now show that these spaces coincide, that is
\[
Q_{KK}=\operatorname{Emb}_\mathcal{O}(P_S,P).
\]

$(1)$ We first prove the inclusion $\operatorname{Emb}_\mathcal{O}(P_S,P)\subset Q_{KK}$.
Given $\mathcal{Q}\in \operatorname{Emb}_\mathcal{O}(P_S,P)$ we consider the induced map $\mathbf{Q}:S\rightarrow M$. Since $\mathcal{Q}$ is smooth, $\mathbf{Q}$ is also smooth and it remains to show that $\mathbf{Q}$ is an embedding. Equivalently, we will show that $\mathbf{Q}$ is an injective immersion and a closed map. To prove that it is injective, choose $s_1,s_2\in S$ such that  $\mathbf{Q}(s_1)=\mathbf{Q}(s_2)$. Let $p_1$ and $p_2$ be such that $\pi(p_i)=s_i, i=1,2$. We have
\[
\pi(\mathcal{Q}(p_1))=\mathbf{Q}(s_1)=\mathbf{Q}(s_2)=\pi(\mathcal{Q}(p_2)),
\]
therefore, there exists $g\in G$ such that $\mathcal{Q}(p_1)=\Phi_g(\mathcal{Q}(p_2))=\mathcal{Q}(\Phi_g(p_2))$ by equivariance. Thus, since $\mathcal{Q}$ is injective, we have $p_1=\Phi_g(p_2)$. This proves that $s_1=s_2$. We now show that $\mathbf{Q}$ is an immersion. Suppose that $v_s\in TS$ is such that $T_s\mathbf{Q}(v_s)=0$. For $v_p\in T_pP_S$ such that $T\pi(v_p)=v_s$, we have
\[
T\pi(T\mathcal{Q}(v_p))=T\mathbf{Q}(T\pi(v_s))=0.
\]
This proves that $T\mathcal{Q}(v_p)$ is a vertical vector. Since $\mathcal{Q}$ is an equivariant embedding, $v_p$ must be vertical. Thus, we obtain $v_s=T\pi(v_p)=0$. This proves that $\mathbf{Q}$ is an immersion. Let $F\subset S$ be a closed subset. The preimage $\mathcal{F}=\pi^{-1}(F)\subset P_S$ is also closed since $\pi$ is continuous. We can write
\[
\mathbf{Q}(F)=\mathbf{Q}(\pi(\mathcal{F}))=\pi(\mathcal{Q}(\mathcal{F})),
\]
and this space is closed since $\mathcal{F}$ is closed, $\mathcal{Q}$ is an embedding, and $\pi$ is a quotient map.

\medskip

$(2)$ We now show the inclusion $Q_{KK}\subset \operatorname{Emb}_\mathcal{O}(P_S,P)$. We first verify that $\mathcal{Q}\in Q_{KK}$ is injective. Given $p_1,p_2\in P_S$, we have
\[
\mathcal{Q}(p_1)=\mathcal{Q}(p_2)\Rightarrow \mathbf{Q}(\pi(p_1))=\mathbf{Q}(\pi(p_2))\Rightarrow\pi(p_1)=\pi(p_2)
\]
by injectivity of $\mathbf{Q}$. Therefore, there exists $g\in G$ such that $p_2=\Phi_g(p_1)$ and we have $\mathcal{Q}(p_1)=\mathcal{Q}(p_2)=\mathcal{Q}(\Phi_g(p_1))=\Phi_g(\mathcal{Q}(p_1))$. By the freeness of the action, we have $g=e$. This proves that $p_1=p_2$ and that $\mathcal{Q}$ is injective. We now prove that $\mathcal{Q}$ is an immersion. Let $v_p\in TP_S$ such that $T\mathcal{Q}(v_p)=0$. The equality
\[
T\mathbf{Q}(T\pi(v_p))=T\pi(T\mathcal{Q}(v_p))=0,
\]
shows that the vector $v_p$ is vertical, since $\mathbf{Q}$ is an immersion. Thus, we can write $v_p=\xi_{P_S}(p)$ for a Lie algebra element $\xi\in\mathfrak{o}$. By equivariance, we get
\[
0=T\mathcal{Q}(\xi_{P_S}(p))=\xi_{P}(\mathcal{Q}(p)).
\]
This proves that $\xi=0$ and that $v_p=0$. We now show that $\mathcal{Q}$ is a closed map. Let $\mathcal{F}\subset P_S$ be a closed subset. In order to show that $\mathcal{Q}(\mathcal{F})$ is closed, we consider a sequence $(q_n)\subset \mathcal{Q}(\mathcal{F})$ converging to $q\in P$ and we show that $q\in\mathcal{Q}(\mathcal{F})$. Define $y_n:=\pi(q_n)\in \mathbf{Q}(F)$, where $F:=\pi(\mathcal{F})$ and $y:=\pi(q)$. The sequence $(y_n)$ converges to $y$ and we have $y\in \mathbf{Q}(F)$ since $\mathbf{Q}(F)$ is closed. Thus, we can consider the sequence $x_n:=\mathbf{Q}^{-1}(y_n)$ converging to $x:=\mathbf{Q}^{-1}(y)$. Let $(p_n)\in \mathcal{F}\subset P_S$ be the sequence determined by the condition $\mathcal{Q}(p_n)=q_n$. Let $s$ be a local section defined on an open subset $U$ containing $x$. Since $\pi(p_n)=x_n$, there exists $g_n\in G$ such that $p_n=\Phi_{g_n}(s(x_n))$, and we have $q_n=\mathcal{Q}(p_n)=\mathcal{Q}(\Phi_{g_n}(s(x_n)))=\Phi_{g_n}(\mathcal{Q}(s(x_n))$. Since the sequences $(q_n)$ and $(\mathcal{Q}(s(x_n)))$ converge, there exists a subsequence $g_{n_k}$ converging to $g\in \mathcal{O}$, by properness of the action. Thus, $(q_{n_k})$ converges to $\Phi_g(\mathcal{Q}(s(x)))=\mathcal{Q}(\Phi_g(s(x)))$, and $\Phi_g(s(x))\in\mathcal{F}$ since $p_{n_k}$ converges to $\Phi_g(s(x))$ and $\mathcal{F}$ is closed. This proves that $q\in\mathcal{Q}(\mathcal{F})$.$\quad\blacksquare$

\subsection{Momentum maps on trivial bundles \label{trivial_case}}

If the principal
bundles are trivial, then the Kaluza-Klein configuration manifold is
\[
Q_{KK}=\operatorname{Emb}_\mathcal{O}(S\times\mathcal{O},M\times\mathcal{O})=\operatorname{Emb}(S,M)\times\mathcal{F}(S,\mathcal{O})
\]
by Lemma \ref{lemma_Q_KK}. Moreover, $\mathcal{Q}\in Q_{KK}$, $\varphi\in\mathcal{A}ut(P)$, and
$\psi\in\mathcal{A}ut(P_S)$ read
\[
\mathcal{Q}(s,g)=(\mathbf{Q}(s),\theta(s)g),\quad\varphi(x,g)=(\eta(x),\chi(x)g),\quad\text{and}\quad \psi(s,g)=(\gamma(s),\beta(s)g).
\]
Thus the left and right compositions $\varphi\circ\mathcal{Q}$ and $\mathcal{Q}\circ\psi$ recover the actions \eqref{left_action_trivial} and \eqref{right_action_trivial}. In the same way $\mathcal{P}_\mathcal{Q}\in T^*_\mathcal{Q}Q_{KK}$ can be written $\mathcal{P}_\mathcal{Q}(s,g)=(\mathbf{P}_{\mathbf{Q} }(s),\kappa_\theta(s)g)$. Thus, $\zeta:=\mathbb{J}\circ \mathcal{P}_\mathcal{Q}\in \mathcal{F}_\mathcal{O}(P_S,\mathfrak{o})^*$ and $\mathbf{P}^\mathcal{A}_{ \mathbf{Q} }:=\left(\operatorname{Hor}^\mathcal{A}_\mathcal{Q}\right)^*\mathcal{P}_\mathcal{Q}$ read
\[
\zeta(s,g)=\operatorname{Ad}_g^*\left((\theta^{-1}\kappa_\theta)(s)\right)\quad\text{and}\quad \mathbf{P}_{ \mathbf{Q} }^\mathcal{A}(s)=\mathbf{P}_{ \mathbf{Q} }(s)-(\kappa_\theta\theta^{-1})(s)\!\cdot\!A(\mathbf{Q}(s)).
\]
Here $A$ denotes the one-form on $M$ induced by the connection $\mathcal{A}$, that is, we have
\[
\mathcal{A}(v_x,\xi_g)=\operatorname{Ad}_{g^{-1}}\left(A(v_x)+\xi_gg^{-1}\right).
\]
Since $\widetilde{\mathcal{Q}}\circ\bar\zeta$ can be identified with
$\kappa_\theta\theta^{-1}$, the left momentum map reads
\begin{align}\label{left_momap_trivial_connectiondependent}
&\mathbf{J}_L\left(\mathcal{P}_\mathcal{Q}\right)\nonumber\\
&=\left(\int_S\left(\mathbf{P}_{ \mathbf{Q} }(s)-\kappa_\theta(s)\theta(s)^{-1}\!\cdot\!A(\mathbf{Q}(s))\right)\delta (x-\mathbf{Q}(s)){\rm d}^ks,\int_S\kappa_\theta(s)\theta(s)^{-1}\delta(x-\mathbf{Q}(s)){\rm d}^ks\right)\nonumber\\
&\qquad\in\mathfrak{X}(M)^*\times\mathcal{F}(M,\mathfrak{o})^*.
\end{align}
We now show that this expression agrees with formula \eqref{left_momap_trivial}. Since the bundle is trivial, a vector field
$U\in\mathfrak{aut}(P)$ is naturally identified with the pair
$(\mathbf{u},{\nu})\in\mathfrak{X}(M)\times\mathcal{F}(M,\mathfrak{o})$.
In this case, the connection dependent isomorphism, $U\mapsto
(\mathcal{A}(U),[U])$ reads simply
$(\mathbf{u},{\nu})\mapsto
(A\!\cdot\!\mathbf{u}+{\nu},\mathbf{u})$. Similarly, the
one-form density $\beta\in\mathfrak{aut}(P)^*$ is identified with the
pair $(\mathbf{m},C)$. In this case, the isomorphism $\beta\mapsto
\left(\left(\operatorname{Hor}^\mathcal{A}\right)^*\beta,\mathbb{J}\circ
\beta\right)$ reads simply $(\mathbf{m},C)\mapsto
(\mathbf{m}-C\!\cdot\!A,C)$. This is exactly the transformation yielding \eqref{left_momap_trivial_connectiondependent}
from \eqref{left_momap_trivial}.

Concerning the right momentum map, using the formulas
\begin{align*}
\mathbf{d}\mathcal{Q}\left(\operatorname{Hor}^{\mathcal{A}_S}_{(s,g)}(v_s)\right)&=\mathbf{d}\mathcal{Q}\left(v_s,-A_S(v_s)g\right)\\
&=\left(\mathbf{d}\mathbf{Q}(v_s),\mathbf{d}\theta(v_s)g-\theta(s)A_S(v_s)g\right)\\
\mathcal{A}\left(\mathbf{d}\mathcal{Q}\left(\operatorname{Hor}^{\mathcal{A}_S}_{(s,g)}(v_s)\right)\right)&=\operatorname{Ad}_{(\theta(s)g)^{-1}}\left(A(\mathbf{d}\mathbf{Q}(v_s))+\left(\mathbf{d}\theta(v_s)g-\theta(s)A_S(v_s)g\right)(\theta(s)g)^{-1}\right)\\
&=\operatorname{Ad}_{g^{-1}}\left(\theta^{-1}A(\mathbf{d}\mathbf{Q}(v_s))\theta+\theta^{-1}\mathbf{d}\theta(v_s)-A_S(v_s)\right),
\end{align*}
and the expression \eqref{intermiediate_computation}, we get
\begin{align*}
\mathbf{J}_R\left(\mathcal{Q},\mathbf{P}^\mathcal{A}_{ \mathbf{Q} },\kappa_\theta\right)
&=\left(\left(\mathbf{P}^\mathcal{A}_{ \mathbf{Q} }+\kappa_\theta\theta^{-1}\!\cdot\!A\circ\mathbf{Q}\right)\!\cdot\!\mathbf{d}\mathbf{Q}+\kappa_\theta\!\cdot\!\left(\mathbf{d}\theta-\theta
A_S\right),\theta^{-1}\kappa_\theta\right)
\\
&
=\left(\mathbf{P}_{ \mathbf{Q} }\!\cdot\!\mathbf{d}\mathbf{Q}+\theta^{-1}\kappa_\theta\!\cdot\!\left(\theta^{-1}\mathbf{d}\theta-
A_S\right),\theta^{-1}\kappa_\theta\right)
\\
&=
\left(\mathbf{P}_{ \mathbf{Q} }\!\cdot\!\mathbf{d}\mathbf{Q}+\kappa_\theta\!\cdot\!\left(\mathbf{d}\theta-
\theta A_S\right),\theta^{-1}\kappa_\theta\right)
.
\end{align*}
This expression recovers \eqref{momap_right_trivial} when $A_S$ is the
trivial connection.

\section{Appendices on incompressible EP$\mathcal{A} ut$ flows}

\subsection{Two dimensional EP$\mathcal{A} ut_{\rm vol}$ equations\label{2d}}

If the manifold $M$ has
dimension two, the volume form $\mu_M$ can be thought of as a
symplectic form, and a vector field $\mathbf{u}$ is divergence free
if and only if it is locally Hamiltonian. If the first cohomology of
the manifold vanishes, $H^1(M)=\{0\}$, then a divergence free vector
field $\mathbf{u}$ is globally Hamiltonian and we can write
$\mathbf{u}=X_\psi$, relative to a \textit{stream function} $\psi$
defined up to an additive constant. The space
$\mathfrak{X}_{\vol}(M)$ can thus be identified with the quotient
space $\mathcal{F}(M)/\mathbb{R}$ and the dual is given by functions
$\varpi$ on $M$ such that $\int_{M}\varpi\mu_M=0$. We denote by
$\mathcal{F}(M)_0$ this space. The duality pairing is given by
\[
\langle [\psi],\varpi\rangle=\int_M\psi\varpi\mu_M,\quad [\psi]\in \mathcal{F}(M)/\mathbb{R}, \; \varpi\in\mathcal{F}(M)_0.
\]
Note that $\varpi\mu_M$ is an exact two-form since its integral over the boundaryless manifold $M$ is zero.
This description of the Lie algebra and its dual is compatible with \eqref{domeg} in the following sense. First, we have the isomorphisms $\mathbf{u}=X_\psi\in\mathfrak{X}_{\vol}(M)\mapsto[\psi]\in\mathcal{F}(M)/\mathbb{R}$, and $\omega=\varpi\mu_M\in \mathbf{d}\Omega^1(M)\mapsto\varpi\in \mathcal{F}(M)_0$. Second, the corresponding duality pairings verify
\[
\langle\varpi\mu_M, X_\psi\rangle=\langle \varpi,[\psi]\rangle.
\]
Consistently, the correspondent functional derivatives of a Lagrangian $l$ are related by the formula
\begin{equation}\label{relation_functional_derivatives}
\frac{\delta l}{\delta \mathbf{u}}=\frac{\delta l}{\delta[\psi]}\mu_M.
\end{equation}
Inserted into \eqref{EPAut_vol_vorticity}, this relation yields the system
\begin{equation}\label{EPAut_vol_2D}
\left\{\begin{array}{l}
\vspace{0.2cm}\displaystyle\frac{\partial}{\partial t}\frac{\delta l}{\delta[\psi]}+\left\{\frac{\delta l}{\delta[\psi]},\psi\right\}+\left\{\frac{\delta l}{\delta{\nu}^i},{\nu}^i\right\}=0\\
\displaystyle\frac{\partial}{\partial t}\frac{\delta l}{\delta{\nu}}+\left\{\frac{\delta l}{\delta{\nu}},\psi\right\}+\operatorname{ad}^*_{{\nu}}\frac{\delta l}{\delta{\nu}}=0,
\end{array}\right.
\end{equation}
where in the last term of the first equation, summation over $i$ is
assumed. At this point, an interesting analogy appears with the
equations of reduced MHD in the so called low $\beta$-limit
\cite{MoHa84,MaMo84,Ho1985}. These equations are Lie-Poisson on the
dual of the semidirect-product Lie algebra $\mathfrak{X}_{\rm
vol}(\mathbb{R}^2)\,\circledS\,\mathcal{F}(\mathbb{R}^2,\RR)$, that
is the Lie algebra of $\Aut_{\rm vol}(\mathbb{R}^2\times
S^1)\simeq\operatorname{Diff}_{\rm
vol}(\mathbb{R}^2)\,\circledS\,\mathcal{F}(\mathbb{R}^2)$. More
particularly, the low $\beta$-limit of the reduced MHD equations
arises as the EP$\Aut_{\rm vol}$  system \eqref{EPAut_vol_2D} on
$\mathbb{R}^2\times S^1$ with the Lagrangian
\[
l(\psi,{\nu})=\frac12\int\!\left(\left|\nabla\psi\right|^2+\left|\nabla{\nu}\right|^2\right)\,{\rm
d}x\,{\rm d}y
\]
Analogous versions involving a non-Abelian group $\mathcal{O}$ are
easily obtained.

\subsection{Group actions for the Marsden-Weinstein dual pair}\label{App:MWDP}
 Let $S$ be a compact manifold with volume form $\mu_S$ and let $(M,\omega)$ be an exact (and hence noncompact) symplectic manifold, with $\omega=-\mathbf{d}\theta$. We endow the manifold $\Emb(S,M)$ of embeddings with the symplectic form
\begin{equation}\label{symplecticform}
\bar\omega(f)(u_f,v_f)=\int_S\omega(f(x))(u_f(x),v_f(x))\mu_S.
\end{equation}

\paragraph{Right momentum map.} The infinitesimal generator
associated to the right action of the group of volume preserving
diffeomorphisms is given by
\[
\mathbf{u}_{\operatorname{Emb}}(f)=Tf\circ \mathbf{u},\quad\mathbf{u}\in\mathfrak{X}_{\vol}(S),\quad f\in\operatorname{Emb}(S,M)
\]
and the associated momentum map \eqref{santiago} reads
\[
\mathbf{J}_R(f)=[f^*\theta]\in\Omega^1(S)/\mathbf{d}\mathcal{F}(S)
\]
We now check that $\mathbf{J}_R$ verifies the momentum map condition. Since the action of the diffeomorphisms of $M$ on $\Emb(S,M)$ is infinitesimally transitive, any tangent vector at $f\in \Emb(S,M)$ is of the form $ \mathbf{v} \circ f$, and it suffices to show the equality
\[
\mathbf{d}\left\langle\mathbf{J}_R,\mathbf{u}\right\rangle(f)(\mathbf{v}\circ f)=\bar\omega\left(\mathbf{u}_{\operatorname{Emb}},\mathbf{v}\circ f\right),\quad\text{for all}\quad f\in\operatorname{Emb}(S,M)\quad\text{and}\quad \mathbf{v}\in \mathfrak{X}(M).
\]
Given a curve $\ph_t\in\Diff(M)$ with $\left.\frac{d}{dt}\right|_{t=0}\ph_t=\mathbf{v}$, we have
\begin{align*}
\mathbf{d}\left\langle\mathbf{J}_R,\mathbf{u}\right\rangle(f)(\mathbf{v}\circ f)&=\left.\frac{d}{dt}\right|_{t=0}\left\langle\mathbf{J}_R(\varphi_t\circ f),\mathbf{u}\right\rangle=\left.\frac{d}{dt}\right|_{t=0}\int_S((\varphi_t\circ f)^*\theta)(\mathbf{u})\mu_S\\
&=\int_S (f^*\pounds_\mathbf{v}\theta)(\mathbf{u})\mu_S=-\int_S\mathbf{i}_\mathbf{u}f^*(\mathbf{i}_\mathbf{v}\omega)\mu_S\\
&=\int_S\omega(Tf\circ\mathbf{u},\mathbf{v}\circ f)\mu_S=(\mathbf{i}_{\mathbf{u}_{\operatorname{Emb}}}\bar\omega)(\mathbf{v}\circ f).
\end{align*}
This proves the desired formula.

\paragraph{Left momentum map and the central extension of $\operatorname{Diff}_{\ham}(M)$.} 
The momentum map one should
obtain from the left action of Hamiltonian diffeomorphisms has the expression \eqref{J_L_MW}
\[
\mathbf{J}_L(f)=\int_S\delta(n-f(x))\mu_S\in\mathcal{F}(M)^*.
\]
Note however that there is an ambiguity in this formula since it confuses the Lie algebra of 
Hamiltonian vector fields with the Lie algebra of Hamiltonian functions. In order to overcome this difficulty, it is necessary to  consider the action naturally induced by the prequantization central extension of the subgroup $\operatorname{Diff}_{\ham}(M)$ of Hamiltonian diffeomorphism. The crucial property for us being that the Lie algebra of this central extension is isomorphic to the space of functions on $M$ endowed with the symplectic Poisson bracket.

\medskip

In the particular case of an exact symplectic form $\omega=-\mathbf{d}\theta$, the prequantization central extension of $\operatorname{Diff}_{\ham}(M)$ is diffeomorphic to the cartesian product $\operatorname{Diff}_{\ham}(M)\x\mathbb{R}$ and is described by a group two cocycle $B$. We denote by $\operatorname{Diff}_{\ham}(M)\x_B\mathbb{R}$ this central extension. The ILM-cocycle $B$ is described in
\cite{IsLoMi2006} and depends on the choice of a point $n_0\in M$. It is given by
\begin{equation}\label{cocycle}
B(\ph_1,\ph_2):=\int_{n_0}^{\ph_2(n_0)}\left(\theta-\ph_1^*\theta\right),\quad \ph_1,\ph_2\in \operatorname{Diff}_{\ham}(M),
\end{equation}
where the integral is taken along a smooth curve connecting the point $n_0$ with the point $\ph_2(n_0)$. 
It is known that the $1$-form $\theta-\ph_1^*\theta$ is exact for any Hamiltonian diffeomorphism $\ph_1$ (\cite{McSa98}),
so the value of this integral does not depend on the choice of such a curve. Moreover  the cohomology class of $B$ is independent of the choice of the point $n_0$ and of the 1-form $\theta$ such that $\omega=-\mathbf{d}\theta$, see Theorem 3.1 in \cite{IsLoMi2006}. As it is well-known (\cite{Ko70}),
the Lie algebra of the prequantization central extension is isomorphic to the space of smooth functions on $M$ endowed with the symplectic Poisson bracket. In the particular case of $\operatorname{Diff}_{\ham}(M)\times_B\mathbb{R}$, the Lie algebra isomorphism being given by
\begin{equation}\label{Lie_algebra_isom}
\mathfrak{X}_{\ham}(M)\times\mathbb{R}\rightarrow\mathcal{F}(M),\quad (X_h,a)\mapsto h+a-h_\theta(n_0),
\end{equation}
where $h_\theta=h-\theta(X_h)$, see \cite{GBTr2011} for a detailed computation and further use of the ILM cocycle.

We consider the left action of $(\ph,\alpha)\in\operatorname{Diff}_{\ham}(M)\times_B\mathbb{R}$ on $\operatorname{Emb}(S,M)$ given by
$(\ph,\alpha)\cdot f:=\ph\circ f$.
The infinitesimal generator associated to the Lie algebra element $h\in\mathcal{F}(M)$ reads
\[
h_{\operatorname{Emb}}(f)=X_h\circ f.
\]
With this geometric setting, we obtain that the momentum mapping associated to the left action of the central extension $\operatorname{Diff}_{\ham}(M)\times_B\mathbb{R}$ on $\operatorname{Emb}(S,M)$ is given by
\[
\mathbf{J}_L:\operatorname{Emb}(S,M)\rightarrow\mathcal{F}(M)^*,\quad \mathbf{J}_L(f)=\int_S\delta(n-f(x))\mu_S,
\]
that is,
\[
\left\langle\mathbf{J}_L(f),h\right\rangle=\int_Sh(f(x))\mu_S.
\]
A direct computation shows that the momentum map condition
\[
\mathbf{d}\left\langle\mathbf{J}_L,h\right\rangle(f)(v_f)
=\int_S(\dd h(v_f(x)))\mu_S=\int_S\om(X_h\o f,v_f)\mu_S
=\mathbf{i}_{h_{\operatorname{Emb}}}\bar\omega(v_f),
\]
is verified for all $v_f\in T_f\operatorname{Emb}(S,M)$.

We recall below from \cite{MaWe83} some
remarkable properties of these momentum maps.

\begin{remark}[Equivariance, Clebsch variables, and Noether theorem]\label{Equiv_Clebsch}\normalfont It is readily seen that the momentum maps are equivariant, since we have
\[
\mathbf{J}_R(f\circ\eta)=\eta^*\mathbf{J}_R(f)\quad\text{and}\quad\mathbf{J}_L(\varphi\circ f)=\varphi_*\mathbf{J}_L(f),
\]
for all $\eta\in\operatorname{Diff}_{\vol}(S)$ and
$\varphi\in\operatorname{Diff}_{\ham}(M)$. Therefore, these maps are
Poisson and provide \textit{Clebsch variables} for the Lie-Poisson equations
on $\mathcal{F}(M)^*$ and $\mathfrak{X}_{\vol}(S)^*$. For
appropriate choices of Hamiltonian, one obtains the Vlasov equation
on $M$  and  Euler's equation on $S$. In particular, $\mathbf{J}_L$
can be interpreted as a singular Klimontovich solution of the Vlasov
equation on $\mathcal{F}(M)$, \cite{HoTr2009}. See \cite{MaWeRaScSp1983} for a geometric treatment of the Vlasov equation in plasma physics.

One also notices that $\mathbf{J}_L$ is invariant under the right action of $\operatorname{Diff}_{\vol}(S)$ and
$\mathbf{J}_R$ is left-invariant under the action of $\operatorname{Diff}_{\ham}(M)\x_B\mathbb{R}$. Therefore, the collective Hamiltonians are invariant and, by Noether's theorem, these momentum maps provide a conservation law for the Clebsch variables.

\end{remark}

\begin{remark}[Two dimensional coincidences]{\rm
A very interesting coincidence holds for the case of Euler's
equation in two dimensions. In this case, Euler's equation for the
vorticity $\omega\in\mathfrak{X}_{\rm vol}(\mathbb{R}^2)^*$ has exactly
the same form of a Vlasov equation on $\mathcal{F}(\mathbb{R}^2)^*$
and, upon denoting by $\{\cdot,\cdot\}$ the canonical Poisson
bracket in $M=\mathbb{R}^2$, the right  momentum map becomes
$
\mathbf{J}_R(\boldsymbol{Q},\boldsymbol{P})=\sum_{i=1}^{k}\left\{Q^i,P_ i\right\}$
which coincides with the Clebsch representation of the Vlasov
distribution in $\mathcal{F}(\mathbb{R}^2)^*$. However, in two
dimensions, the left leg momentum map also returns a solution of
Euler's equation. In particular, if $S$ is a point with a certain
weight $w\in\mathbb{R}$ and the fluid moves in $M=\mathbb{R}^2$, then the
left momentum map $\mathbf{J}_L$ yields another Clebsch
representation of the vorticity, which is the well known point
vortex solution on the plane. As we shall see, this coincidence does
not hold for EP$\Aut_{\rm vol}$.
}
\end{remark}

\subsection{Proof of Theorem \ref{rightlegtrivial}\label{proposition1}}

We shall proof this result in the case when $w=1$ in \eqref{PBB2}, since extending to arbitrary values of $w$ is straightforward. We know from the definition of momentum map
that inserting $J_\beta=\langle{\bf J},\beta\rangle$ in the Poisson
bracket  \eqref{PBB2} must return the
infinitesimal generator \eqref{LieAlgAction}, that is
\[
\{F,\left\langle{\bf J}_R,(\mathbf{u},\zeta)\right\rangle\}=\mathbf{d} F\left( (\mathbf{u},\zeta)_{\,\operatorname{Emb}(\mathbb{R}^3,\mathbb{R}^{2k}\times
\mathfrak{o}^*)\times\mathcal{F}(\Bbb{R}^3,\mathcal{O})}\right),\;\text{for all}\;
( \mathbf{u} , \zeta )\in \mathfrak{X}_{\rm
vol}(\mathbb{R}^3)\,\circledS\,\mathcal{F}(\mathbb{R}^3,\mathfrak{o}).
\]
This proof shows that the above relation holds.

One evaluates
\begin{align}
J_{( \mathbf{u} , \zeta )}:=\langle{\bf
J}_R,{( \mathbf{u} , \zeta )}\rangle=\int\Big(\nabla\boldsymbol{Q}\cdot\boldsymbol{P}+\operatorname{Tr}\left(\mu^T\,\nabla\theta\,\theta^{-1}\right)\Big)\cdot\mathbf{u} +\int\operatorname{Tr}\left(\mu^T\operatorname{Ad}_\theta\zeta \right)
\end{align}
Since the $(\boldsymbol{Q},\boldsymbol{P})$-components of the
infinitesimal generator are evidently recovered by a simple verification, we focus on the
$(\mu,\theta)$-components. As a first step, we compute
the $\theta$-component, which arises from the second term in the
bottom line of \eqref{PBB2}. Thus, first we calculate
\begin{align*}
\frac{\delta J_{( \mathbf{u} , \zeta )}}{\delta
\mu }=\mathbf{u} \cdot\nabla\theta\,\theta^{-1}+\operatorname{Ad}_\theta\zeta
\end{align*}
and therefore we obtain
\begin{align*}
\int\left\langle \frac{\delta F}{\delta
\theta},\frac{\delta J_{( \mathbf{u} , \zeta )}}{\delta
\mu}\theta\right\rangle =&\int\left\langle \frac{\delta
F}{\delta
\theta},\left( \mathbf{u} \cdot\nabla\theta\,\theta^{-1}+\operatorname{Ad}_\theta\zeta \right) \theta\right\rangle
= \int\left\langle \frac{\delta F}{\delta
\theta},\left( \mathbf{u} \cdot\nabla\theta+\theta\zeta\right) \right\rangle
\end{align*}
which evidently coincides with the $\theta$-component of the Lie algebra
action in \eqref{LieAlgAction}.

We now focus on the $\mu$-component. As a first step, we compute
\begin{align*}
\int
 \left\langle\mu,\left[\frac{\delta F}{\delta
\mu},\frac{\delta J_{( \mathbf{u} , \zeta )}}{\delta \mu}\right]\right\rangle=&\int
\left\langle\mu,\left[\frac{\delta F}{\delta
\mu},\left(\mathbf{u}\cdot \nabla  \theta\right)\theta^{-1}+\operatorname{Ad}_\theta\zeta
\right]\right\rangle
\\
=& -\int
\left\langle\operatorname{ad}^*_{\left(\mathbf{u}\cdot \nabla  \theta\right)\,\theta^{-1}\,}\mu,\frac{\delta
F}{\delta \mu}\right\rangle-\int
\left\langle\operatorname{ad}^*_{\,\left(\operatorname{Ad}_\theta\zeta \right)}\mu,\frac{\delta
F}{\delta \mu}\right\rangle.
\end{align*}
The $\theta$-variation of $J_{( \mathbf{u} , \zeta )}$ is expressed as
\begin{align*}
\delta
J_{( \mathbf{u} , \zeta )}=&\int\operatorname{Tr}\Big( \mu^T \nabla\delta\theta\theta^{-1}-\mu ^T\nabla\theta\theta^{-1}\delta\theta\theta^{-1}\Big) \cdot \mathbf{u}
+
\int\Big\langle\!\operatorname{Ad}^*_{\theta\!}\big(\operatorname{ad}^*_{\delta\theta\theta^{-1}}\mu\big),\,\zeta \Big\rangle
\\
=&
-\int\operatorname{Tr}\Big(\mathbf{u} \cdot \nabla ( \theta ^{-1}\mu ^T) \delta \theta  \Big)
- \int\operatorname{Tr}\Big( \theta ^{-1} \mu ^T (\mathbf{u} \cdot \nabla  \theta) \theta^{-1}\delta\theta)\\
&
-\int\operatorname{Tr}\Big(\theta ^{-1} \left( \operatorname{ad}^*_{\left(\operatorname{Ad}_\theta\zeta \right)}\mu\right)^T\delta \theta \Big),
\end{align*}
where we have used the general formula \cite{MaRa99}
\[
\delta\big(\!\operatorname{Ad}^*_{\,\theta^{-1}}\mu\big)=\operatorname{Ad}^*_{\,\theta^{-1}}\!\Big(\delta\mu-\operatorname{ad}^*_{\,\theta^{-1}\delta\theta}\mu\Big)
\,.
\]
Therefore
\begin{align*}
\frac{\delta J_{( \mathbf{u} , \zeta )}}{\delta
\theta}=-\mathbf{u} \cdot \nabla \left( \mu (\theta ^{-1} )^T\right) -
(\theta^{-1})^T\left(\pounds_\mathbf{u} \theta^T\right)\mu(\theta^{-1})^T
-
\left(\operatorname{ad}^*_{\left(\operatorname{Ad}_\theta\zeta \right)}\mu\right)(\theta^{-1})^T
\end{align*}
and the last term in \eqref{PBB2} yields
\begin{align*}
&-\int\left\langle\frac{\delta J_{( \mathbf{u} , \zeta )}}{\delta \theta},\frac{\delta
F}{\delta
\mu}\theta\right\rangle=-\int\operatorname{Tr}\!\left(\theta\,\frac{\delta
J_{( \mathbf{u} , \zeta )}}{\delta \theta}^T\frac{\delta F}{\delta \mu}\theta\right)
\\
&\;\;=
\int\operatorname{Tr}\!\left(\left(\theta\pounds_\mathbf{u} \!\!\left(\theta^{-1}\mu^T\right)
+
\mu^T\left(\pounds_\mathbf{u} \theta\right)\theta^{-1}
+
\left(\operatorname{ad}^*_{\left(\operatorname{Ad}_\theta\zeta \right)}\mu\right)^{\!T\!}
\right)\frac{\delta F}{\delta \mu}\right)
\\
&\;\;= \int \operatorname{Tr} \!\left(\left(
\left[\mu^T,\left(\pounds_\mathbf{u} \theta\right)\theta^{-1}\right]
+
\pounds_\mathbf{u} \mu^T
 +
\left(\operatorname{ad}^*_{\left(\operatorname{Ad}_\theta\zeta \right)}\mu\right)^{\!T\!} \right) \frac{\delta F}{\delta \mu} \right)
\\
&\;\;= \int \left\langle\left(
\operatorname{ad}^*_{\left(\left(\pounds_\mathbf{u} \theta\right)\theta^{-1}\right)}
\mu +
\pounds_\mathbf{u} \mu
+
\operatorname{ad}^*_{\left(\operatorname{Ad}_\theta\zeta \right)}\mu\right),\frac{\delta
F}{\delta \mu} \right\rangle.
\end{align*}
In conclusion, we are left with
\begin{align*}
\int
 \left\langle\mu,\left[\frac{\delta F}{\delta
\mu},\frac{\delta J_{( \mathbf{u} , \zeta )}}{\delta
\mu}\right]\right\rangle-\int\left\langle\frac{\delta
J_{( \mathbf{u} , \zeta )}}{\delta \theta},\frac{\delta F}{\delta
\mu}\theta\right\rangle=\int\left\langle\pounds_\mathbf{u} \mu,\frac{\delta
F}{\delta \mu}\right\rangle
\end{align*}
which coincides with the $\mu$-component of the infinitesimal
generator in \eqref{LieAlgAction}.$\quad\blacksquare$

\subsection{Proof of Theorem \ref{kernel}\label{proposition2}}

For each Hamiltonian automorphism $\ph\in \mathcal{A}ut_{\ham}(P)$,
the 1-form $\ph^*\th-\th$ is exact and $\O$-invariant, hence there
exists a function $F_\ph\in\F(P)$ such that $\dd
F_\ph=\ph^*\th-\th$. Let $\Psi_\ph:\mathcal{O}\rightarrow\mathbb{R}$
be the map defined by
\[
\Psi_\ph(g):=F_\ph-F_\ph\o\Ph_g\in\mathbb{R},\quad\text{for
all}\quad g\in\O.
\]
Then $\Psi_\ph$ is independent of the choice of the function $F_\ph$
and is a group homomorphism, that is,
$\Psi_\ph(gh)=\Psi_\ph(g)+\Psi_\ph(h)$. We thus get a map
\[
\Psi:\mathcal{A}ut_{\ham}(P)\to\Hom(\O,\mathbb{R}),\quad
\varphi\mapsto \Psi_\ph.
\]
To show that $\Ps$ is a group homomorphism, we use the identity $\dd F_{\ph\o \ps}=\dd(\ps^*F_\ph+F_\ps)$ and the $\mathcal{O}$-equivariance of $\ps$ to obtain, for all $g\in\O$,
\[
\Psi_{\ph\o \ps}(g)=\ps^*F_\ph-\Ph_g^*\ps^*F_\ph+F_\ps-\Ph_g^*F_\ps
=\ps^*\Psi_\ph(g)+\Psi_\ps(g)=(\Psi_\ph+\Psi_\ps)(g).
\]
This proves that $\Psi$ is a group homomorphism, so its kernel
$\overline{\mathcal{A}ut}_{\ham}(P):=\operatorname{ker}\Psi$ is a
normal subgroup of ${\mathcal{A}ut}_{\ham}(P)$,
by the first homomorphism theorem.

In order to find its Lie algebra, we compute the associated Lie algebra homomorphism
\[
\be:\mathfrak{aut}_{\ham}(P)\to\Hom(\O,\mathbb{R}).
\]
Let $X_h$, $h\in\F(P)$, be an $\O$-equivariant Hamiltonian vector field on $P$ and let $\ph_t$ be a curve of Hamiltonian
automorphisms of $P$ such that
$\left.\frac{d}{dt}\right|_{t=0}\ph_t=X_h$. We know from
\eqref{exact} that there exists a function $F_t:=F_{\varphi_t}$ on
$P$ such that $\ph_t^*\th-\th=\dd F_t$. Then we obtain
\[
\dd\left(\left.\frac{d}{dt}\right|_{t=0}F_t\right)=\pounds_{X_h}\th=\dd\left(\mathbf{i}_{X_h}\th-h\right),
\]
which we use, together with the $\O$-invariance of the function
$\mathbf{i}_{X_h}\th$, to compute
\begin{align*}
\be(X_h)(g)&=\left.\frac{d}{dt}\right|_{t=0}\Psi_{\ph_t}(g)
=\left.\frac{d}{dt}\right|_{t=0}\left(F_t-F_t\o\Ph_g\right)
\\&=\left(\mathbf{i}_{X_h}\th-h\right)-\left(\mathbf{i}_{X_h}\th-h\right)\o\Ph_g
=h\o\Ph_g-h.
\end{align*}
Note that, as it should, the expression
$h\o\Ph_g-h$ depends only on $X_h$ and not on the chosen
Hamiltonian.

In conclusion the homomorphism $\be(X_h)$ measures the lack of
$\O$-invariance of the Hamiltonian function $h$. This means that
$\ker \be=\overline{\mathfrak{aut}}_{\ham}(P)$, the Lie algebra of
Hamiltonian vector fields with invariant Hamiltonian functions. The
integrated version of this ideal is the normal subgroup
$\overline{\mathcal{A}ut}_{\ham}(P)=\ker \Psi$ of $\Aut_{\ham}(P)$.
$\quad\blacksquare$

{\footnotesize

\bibliographystyle{new}
\addcontentsline{toc}{section}{References}

\end{document}